\newtheorem{theorem}{Theorem}[section]
\newtheorem{proposition}[theorem]{Proposition}
\newtheorem{lemma}[theorem]{Lemma}
\newtheorem{corollary}[theorem]{Corollary}
\theoremstyle{definition}
\newtheorem{condition}[theorem]{Condition}
\theoremstyle{remark}
\newtheorem{remark}[theorem]{Remark}
\newtheorem{example}[theorem]{Example}
\newtheorem*{lemma-no-number}{Lemma}
\numberwithin{equation}{section}
\newcommand{\R}{\mathbb{R}}
\newcommand{\N}{\mathbb{N}}
\newcommand{\Prob}{\mathbb{P}}    %probability
\newcommand{\Cb}{\mathbb {C}}
\newcommand{\Ic}{\mathcal{I}}
\newcommand{\Mc}{\mathcal{M}}
\newcommand{\Nc}{\mathcal{N}}
\newcommand{\eps}{\varepsilon}
\newcommand{\diff}{{\,\mathrm{d}}}
\newcommand{\Exp}{\operatorname{E}}
\newcommand{\Var}{\operatorname{Var}}
\newcommand{\Cov}{\operatorname{Cov}}
\newcommand{\sinc}{\operatorname{sinc}}
\newcommand{\as}{{\mathrm{a.s.}}}
\begin{document}

\title{\fontsize{16}{19} The empirical copula process in high dimensions: Stute's representation and applications}

\author{
Axel B\"ucher\thanks{Ruhr-Universität Bochum, Fakultät für Mathematik. Email: \href{mailto:axel.buecher@rub.de}{axel.buecher@rub.de}}
\and
Cambyse Pakzad\thanks{Université Paris Nanterre, Modal'X. Email: \href{mailto:cambyse.pakzad@parisnanterre.fr}{cambyse.pakzad@parisnanterre.fr}}
}

\date{\today}

\maketitle

\begin{abstract}
The empirical copula process, a fundamental tool for copula inference, is studied in the high dimensional regime where the dimension is allowed to grow to infinity exponentially in the sample size. Under natural, weak smoothness assumptions on the underlying copula, it is shown that Stute's representation is valid in the following sense: all low-dimensional margins of fixed dimension of the empirical copula process can be approximated by a functional of the low-dimensional margins of the standard empirical process, with the almost sure error term being uniform in the margins. The result has numerous potential applications, and is exemplary applied to the problem of testing pairwise stochastic independence in high dimensions, leading to various extensions of recent results in the literature: for certain test statistics based on pairwise association measures, type-I error control is obtained for models beyond mutual independence.  Moreover, bootstrap-based critical values are shown to yield strong control of the familywise error rate for a large class of data generating processes.
\end{abstract}

\noindent\textit{Keywords.} 
High dimensional statistics; 
Kendall's tau;
%Lancaster interaction;
Multiplier bootstrap;
Rank-based inference;
Spearman's rho;
Testing independence.

\smallskip

\noindent\textit{MSC subject classifications.} 
Primary
62G20, %Asymptotic properties of nonparametric inference
%62G09 %Nonparametric statistical resampling methods
%60F17 %Functional limit theorems; invariance principles
62H05; %Characterization and structure theory for multivariate probability distributions; copulas
%62G30 %Order statistics; empirical distribution functions
secondary
62G09.  %Nonparametric statistical resampling methods

%\tableofcontents

\section{Introduction}

Statistically assessing dependence in high dimensions 
has recently attracted a lot of attention, see, for instance, 
\cite{CheZhaZho10,
LiuHan12, 
HanCheLiu17, 
%CaiZhang18,
LeuDrt18, 
Yao18, 
Drt20,
XiaLi2021,
Det24,
LiWangYao24}, 
among others. 
To the best of our knowledge, the issue has not been approached by general, high-level nonparametric copula methods yet, which is quite surprising in view of the success of copula methods in the low-dimensional case where the dimension $d$ is fixed  (the only exception we are aware of is a specific approach for testing stochastic independence in high dimensions in \cite{BucPak24}). A central object in this regard is the empirical copula and its standardized estimation error, the empirical copula process, which has played a fundamental role for constructing sophisticated inference procedures on copulas in the low-dimensional regime. For instance, it has been used for testing for stochastic independence \citep{Deh79, GenRem04, GenQueRem07} or more general shape constraints \citep{GenNesQue11, BeaSeo20}, for constructing estimators for dependence measures and functions \citep{GenSeg09, Sch10, ChaFouNes20, BucGen23}, for goodness-of-fit testing \citep{GenRemBea09, KojYan11, Rem17} or for change-point analysis \citep{BucKojRohSeg14}, among others. The present paper aims to enable similar applications in the high-dimensional case by providing and applying suitable limit result on the empirical copula process under weak smoothness assumptions on the copula. 
Presumably, the results will be useful for studying methods related to applications in the areas of finance, genomics or medicine, among others:
for instance, \cite{AnatolyevPyrlik2022} have used Gaussian and $t$-copula models for portfolio allocation using data for $d=3,600$ stocks collected on $n=120$ trading days; \cite{MulCza19} suggested vine copula models for $d=2,100$ stocks; \cite{SahinCzado2024} employed vine copula regression techniques in genomics with $d\approx 500,000$ and sample size $n \approx 500$; \cite{XiaLi2021} suggested a copula-based variable screening approach with $d \approx 30,000$ and $n=120$ and \cite{KasaBhattacharyaRajan2019} applied clustering methods in medicine based on Gaussian mixture copulas with $n \approx 100$ and $d$ up to 2,000.

Let $\bm X=(X_1, \dots, X_d) \in \R^d$ denote a $d$-variate random vector with common cumulative distribution function (cdf) $F$ and continuous marginal cdfs $F_1,\ldots,F_d$. By Sklar's theorem, there exists a unique copula $C:[0,1]^d \to [0,1]$ such that
$
F(\bm x) = C(F_1(x_1), \dots, F_d(x_d))
$
for all $\bm x=(x_1, \dots, x_d) \in \R^d$. 
Let $\bm X_1, \dots, \bm X_n$ denote an i.i.d.\ sample from $\bm X$. The empirical copula, the most common nonparametric estimator of $C$, is defined as
\[
\hat C_n(\bm u) =
\frac1n \sum_{i=1}^n \bm 1(\hat {\bm U}_i \le \bm u) 
=
\frac1n \sum_{i=1}^n \prod_{j=1}^d \bm 1(\hat U_{ij} \le u_j), \quad  \bm u \in [0,1]^d,
\]
where $\hat{\bm U}_i = (\hat U_{i1}, \dots \hat U_{id})$ and $
\hat U_{ij} = \frac1n R_{ij}$ for $j\in\{1, \dots, d\}$. Here,  
$R_{ij}$ denotes the (max-)rank of $X_{ij}$ among $X_{1j}, \dots, X_{nj}$. 
The rescaled estimation error of the empirical copula,
\[
\Cb_n(\bm u) = \sqrt n  \{ \hat C_n(\bm u) - C(\bm u) \}, \quad \bm u \in [0,1]^d,
\]
is called the empirical copula process; it is the main object of interest in this paper.

In all of the statistical applications in the low-dimensional regime mentioned in the first paragraph, a central ingredient is the functional weak convergence of the empirical copula process in the space $\ell^\infty([0,1]^d)$ of bounded real-valued functions defined on $[0,1]^d$, equipped with the supremum distance. Such a result is well-known in the case where $d$ is fixed and some suitable smoothness assumptions on $C$ are met (see \cite{Rus76, FerRadWeg04, Tsu05, Seg12}, among others), and is typically obtained by a linearization of the empirical copula process. More precisely, let $\alpha_n(\bm u) = n^{-1/2} \sum_{i=1}^n \{ \bm 1(\bm U_i \le \bm u) - C(\bm u) \}$ denote the standard empirical process based on the (unobservable) sample $\bm U_1, \dots, \bm U_n$ (with $\bm U_i=(U_{i1}, \dots, U_{id})$ and $U_{ij}=F_j(X_{ij})$), and let $\alpha_{nj}(u_j)=\alpha_n(1, \dots, 1, u_j, 1, \dots, 1)$ denote its $j$th margin; here, $u_j$ appears at the $j$th entry. Then, writing 
\begin{align} \label{eq:barcn}
\bar \Cb_n(\bm u) = \alpha_{n}(\bm u) - \sum_{j=1}^d \dot C_j(\bm u) \alpha_{nj}(u_j),
\end{align}
where we assume that the $j$th first-order partial derivative $\dot C_j(\bm u) = (\partial/{\partial u_j})C(\bm u)$ exists and is continuous on $V_j=\{\bm u \in [0,1]^d:u_j \in (0,1)\}$, we have, for from Proposition 3.1 in \cite{Seg12}, 
\begin{align} \label{eq:cnbcn}
\sup_{\bm u \in [0,1]^d} | \Cb_n(\bm u) - \bar \Cb_n(\bm u)| = o_\Prob(1)
\end{align}
for $n\to\infty$. In fact, under slightly stronger conditions on the smoothness of $C$, the $o_\Prob(1)$-term can be replaced by the almost sure term $O_{\as}(n^{-1/4} (\log n)^{1/2} (\log \log n)^{1/4})$, see Proposition 4.2 in \cite{Seg12}; this almost sure linearization of the empirical copula process is also known as Stute's representation \citep{Stu84}. Note that the result in \eqref{eq:cnbcn} essentially allows ranks to be removed ($\Cb_n$ involves ranks while $\bar \Cb_n$ is rank-free) and thus enables further analysis of $\Cb_n$ or functionals thereof based on the abundance of available limit results/concentration inequalities etc.\ for sums and empirical processes of independent summands. In particular, we readily obtain weak convergence of $\Cb_n$ to a centered Gaussian process \citep[ Proposition 3.1]{Seg12}.

In this paper, we are interested in generalizations of \eqref{eq:cnbcn} for the high-dimensional regime. We will consider both a non-asymptotic `in probability' version (Theorem~\ref{theo:main-new}) as well as an almost sure version for the case where $d=d_n$ may depend on $n$ and actually be larger than~$n$. In the latter case, $(\bm X_1, \dots, \bm X_n)=(\bm X_{1}^{(n)}, \dots, \bm X_{n}^{(n)})$ is a row-wise independent triangular array with copula $C^{(n)}$; we suppress the upper index for notational convenience. Being interested in almost sure error bounds, we then assume that all random variables are defined on the same probability space -- all results apply even if this is not the case, with `almost sure'-rates replaced by `in probability'-rates.  Our main result states that, under some weak smoothness assumptions on $C$ and if $d=d_n$ satisfies $\log d =o( n^{1/3})$, we have, for any fixed $k \in  \N_{\ge 2}$,
\begin{align} \label{eq:cbncn-d}
\sup_{\bm u \in W_k} |\Cb_n(\bm u)- \bar \Cb_n(\bm u)| =
O_\as(n^{-1/4} (\log (nd))^{3/4}) =o_\as(1),
\end{align}
where $W_k = W_{d,k} = \{ \bm u \in [0,1]^d: |\{j: u_j < 1\}| \le k\}$ denotes the set of vectors in $[0,1]^d$ for which at most $k$ coordinates are strictly smaller than 1. Equivalently, 
\[
\max_{I \subset \{1, \dots, d\}: |I| = k} \sup_{\bm w \in [0,1]^k} | \Cb_{n,I}(\bm w) - \bar \Cb_{n,I}(\bm w) | =O_\as(n^{-1/4} (\log (nd))^{3/4}) ,
\]
where, for some function $f$ on $[0,1]^d$ and $I\subset \{1, \dots, d\}$ and  $\bm w =(w_j)_{j \in I} \in [0,1]^I$, we write $f_I(\bm w) := f(\bm w^I)$, where $\bm w^I \in [0,1]^d$ has $j$th component
$w_j^I =  w_j \bm 1(j\in I) + \bm 1(j\notin I)$; note that $C_I$ is the $I$-margin of $C$. Hence, \eqref{eq:cbncn-d} essentially provides a uniform linearization of the empirical copula process over all $k$-variate margins. Similar as in the low dimensional regime, the result allows to apply distributional approximation results for independent summands in high dimensions  (see \cite{Che13, Che22} among others) 
to the empirical copula process and functionals thereof. 

As mentioned above, there are many potential applications of \eqref{eq:cbncn-d}, and we exemplary work out details on pairwise dependence measures and high-dimensional independence tests in Section~\ref{sec:app}. First of all, we will reconsider test statistics for pairwise independence proposed in \cite{HanCheLiu17}. While these authors derive weak limit results for their test statistics for mutual independence only (i.e., $C(\bm u)=\prod_{j=1}^d u_j =: \Pi_d(\bm u)$ for all $\bm u \in [0,1]^d$), we obtain the same limit distribution under much weaker assumptions that only concern the four-dimensional margins of $C$. As a by-product, we provide uniform linearizations of common pairwise dependence measures under general weak smoothness assumptions on $C$. Next, we also propose a bootstrap-based version of the test in \cite{HanCheLiu17} which is shown to hold its level under very general assumptions on $C$ that only involve pairwise dependencies. In fact, it is shown that the new test even provides (asymptotic) strong control of a certain family-wise error rate. Finally, we also consider max-type test statistics for independence based on the Moebius transform of the empirical copula process (see \cite{GenRem04} and \cite{BucPak24} for the low- and high-dimensional regime, respectively), and provide a weak limit result for the case $C=\Pi_d$.

The remaining parts of this paper are organized as follows: the main result on Stute's representation for the empirical copula process in high dimensions is provided in Section~\ref{sec:stute}, with some key lemmas for its proof and a non-asymptotic version postponed to Section~\ref{sec:proofs}. The applications mentioned in the previous paragraph are worked out in Subsections~\ref{sec:app1}-\ref{sec:app3}, with corresponding proofs postponed to Section~\ref{sec:proofapp}. Some less central proofs are postponed to an appendix.

Some notation: for $(a_n)_{n \in \N}, (b_n)_{n \in \N} \subset (0,\infty)$, we write $a_n \ll b_n$ if $\lim_{n \to \infty} a_n/b_n=0$ and $a_n\lesssim b_n$ if there exists some constant $c>0$ independent of $n$ such that $a_n\leq c b_n$ for $n$ large enough. For $I\subset \{1, \dots, d\}$ and $\bm u \in [0,1]^d$, we write $\bm u_I = (u_j)_{j\in I} \in [0,1]^I$. Further recall that, for $\bm w \in [0,1]^I, \bm w^I \in [0,1]^d$
is defined to have $j$th component
$w_j^I =  w_j \bm 1(j\in I) + \bm 1(j\notin I)$. In particular, we may write $\alpha_{nj}(u_j)=\alpha_n(u_j^{\{j\}})$. For $\bm u \in [0,1]^d$, $I_{\bm u}$ denote the set of indexes $j$ such that $u_j<1$; note that $|I_{\bm u}| \le k$ for $\bm u \in W_k$. The independence copula in dimension $k\in\N_{\ge 2}$ is denoted $\Pi_k$, i.e., $\Pi_k(\bm u) = \prod_{i=1}^k u_k$ for $\bm u \in [0,1]^k$. For $k \le d$ with $k,d\in\N$, we write $\mathcal I_k(d)=\{I \subset \{1, \dots, d\}: |I|=k\}$.

\section{Main result: Stute's representation in high dimensions}
\label{sec:stute}

It is instructive to start by recapitulating the assumptions needed for deriving \eqref{eq:cnbcn} in the low-dimensional regime.

\begin{condition}[First-order regularity, Condition 2.1 in \cite{Seg12}] \label{cond:c1}
%Fix $d \in \N_{\ge 2}$. 
For each $j\in \{1,\ldots, d\}$, the $j$-th first-order partial derivative of $C$ exists and is continuous on the set $V_j=V_{d,j} = \{ \bm u \in [0,1]^d: u_j \in  (0,1)\}$.
\end{condition}

Here, the $j$th first-order partial derivative of $C$ is given by $
\dot C_j(\bm u):=\lim_{h\to 0}  \{ C(\bm u+ h \bm e_j)-C(\bm u)\} / {h}$
for $\bm u \in V_{j}$, where $\bm e_j$ denotes the $j$th unit vector in $\R^d$. To facilitate notation, 
we let
$\dot C_j(\bm u) = 
\limsup_{h \downarrow 0} \{ C(\bm u+ h \bm e_j)-C(\bm u)\} / {h}$ if $u_j = 0$ and $\dot C_j(\bm u) =
\limsup_{h \downarrow 0} \{ C(\bm u- h \bm e_j)-C(\bm u)\} / {(-h)}$ if $u_j = 1$,
% \end{align*}
such that $\bar \Cb_n$ in \eqref{eq:barcn} is well-defined. In the low-dimensional regime where $d$ is fixed, Condition~\ref{cond:c1} is sufficient for \eqref{eq:cnbcn}, see Proposition 3.1 in \cite{Seg12}. For an almost sure error bound in that regime, a slightly stronger assumption is needed.

\begin{condition}[Second-order regularity, Condition 4.1 in \cite{Seg12}]  \label{cond:c2-segers} 
Condition~\ref{cond:c1} is met.
Moreover, for each $i,j\in \{1,\ldots, d\}$, the second-order partial derivative $\ddot C_{ij}$ exists and is continuous on the set $V_{i}\cap V_{j}$. There exists a constant $K$ such that, for any $i,j\in \{1, \dots, d\}$, %and all $\bm u\in V_{i}\cap V_{j}$,
\begin{align*}
|\ddot C_{ij}(\bm u)| &\leq K \min \left( \frac{1}{u_j(1-u_j)} , \frac{1}{u_i(1-u_i)}   \right) \qquad \forall\ \bm u\in V_{i}\cap V_{j}.
\end{align*}
\end{condition}

In the low-dimensional regime, Condition~\ref{cond:c2-segers} implies \eqref{eq:cnbcn} with the $o_\Prob(1)$-error term replaced by $O_{\as}(n^{-1/4} (\log n)^{1/2} (\log \log n)^{1/4})$, see Proposition 4.2 in \cite{Seg12}. It is worthwhile to mention that the condition is known to hold for many common copula families like the Gaussian copula or many extreme-value copulas; see Section 5 in \cite{Seg12} and Example~\ref{ex:copulas} below.

Next, in the general regime where $d=d_n$ is allowed to depend on $n$, we will need a version of Condition~\ref{cond:c2-segers} with uniform control over the explosive behavior of the second-order partial derivatives near the boundaries, uniform over all $k$-variate margins.  Recall the set $W_k$ of vectors in $[0,1]^d$ for which at most $k$ coordinates are strictly smaller than 1:
\begin{align} \label{eq:Wk}
W_k = W_{d,k} = \{ \bm u \in [0,1]^d: |\{j: u_j < 1\}| \le k\}.
\end{align}

\begin{condition}[Second-order regularity of $k$-variate margins]\label{cond:c2-new} 
Fix $k \in \N_{\ge 2}$ with $k\le d$. For each $j \in \{1, \dots, d\}$, the $j$th first-order partial derivative $\dot C_j$ exists and is continuous on $V_{j} \cap W_k$. 
Moreover, for each $i,j\in \{1,\ldots, d\}$, the second-order partial derivative $\ddot C_{ij}$ exists and is continuous on the set $V_{i}\cap V_{j} \cap W_k$. There exists a constant $K$ such that, for any $i,j\in \{1, \dots, d\}$,% and for all $\bm u\in V_{i}\cap V_{j} \cap W_k$,
\begin{align*}
|\ddot C_{ij}(\bm u)| &\leq K \min \left( \frac{1}{u_j(1-u_j)} , \frac{1}{u_i(1-u_i)}   \right)   \qquad \forall\ \bm u\in V_{i}\cap V_{j} \cap W_k.
\end{align*}
\end{condition}

\begin{remark}
If $k = d$, we have $W_d=[0,1]^d$, whence Condition~\ref{cond:c2-new} is equivalent to Con\-di\-tion~\ref{cond:c2-segers}. If $k<d$, Condition~\ref{cond:c2-new} is equivalent to requiring that Condition~\ref{cond:c2-segers} is met for any $k$-variate margin $C_I$ (see the arguments below \eqref{eq:cbncn-d}), with the constant $K$ being uniform in $I$. 
\end{remark}

Note that $\bar \Cb_n(\bm u)$ from \eqref{eq:barcn} is well-defined for any $\bm u \in W_k$ under Condition~\ref{cond:c2-new}. Indeed, since $\alpha_{nj}(1)=0$, it can be written as $\bar \Cb_n(\bm u)= \alpha_n(\bm u) - \sum_{j \in I_{\bm u}} \dot C_j(\bm u) \alpha_{nj}(u_j)$, where $I_{\bm u}$ denotes the set of indexes for which $u_j<1$. 
Our main result is as follows; it is proven in Section~\ref{sec:proofs}, where we derive it from a non-asymptotic `in probability'-version that is of independent interest.

\begin{theorem}[Stute's representation in high dimensions] \label{theo:main}
Fix $k\in\N_{\ge 2}$ and suppose that Condition~\ref{cond:c2-new} is met for this choice of $k$ and for any $C=C^{(n)}$, with $K=K(k)$ being independent of $n$ and $C$. Then, if $d$ satisfies $\log d = o(n^{1/3})$,  we have
\[
\sup_{\bm u \in W_k} |\Cb_n(\bm u)- \bar \Cb_n(\bm u)| =O_\as(n^{-1/4} (\log (nd))^{3/4} ) =o_\as(1)
\]
with $W_k$ from \eqref{eq:Wk}.
The implicit constant in the error bound only depends on $k$ and $K$. 
\end{theorem}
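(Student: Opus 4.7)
The plan is to derive the almost-sure rate from the non-asymptotic in-probability companion Theorem~\ref{theo:main-new} by selecting an appropriate threshold and applying the Borel--Cantelli lemma; the heavy lifting is done once Theorem~\ref{theo:main-new} is in place.

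As a preliminary step I would record that the supremum over $W_k$ is equivalent to a maximum over $k$-variate margins. Since $\alpha_{nj}(1)=0$ and the extra coordinates with $u_j=1$ integrate out trivially, for every $\bm u \in W_k$ one has $\Cb_n(\bm u)-\bar\Cb_n(\bm u)=\Cb_{n,I_{\bm u}}(\bm u_{I_{\bm u}})-\bar\Cb_{n,I_{\bm u}}(\bm u_{I_{\bm u}})$ with $|I_{\bm u}|\le k$, so that
\[
\sup_{\bm u \in W_k} |\Cb_n(\bm u)-\bar\Cb_n(\bm u)|
\;\le\; \max_{I \in \mathcal I_k(d)} \sup_{\bm w \in [0,1]^k} |\Cb_{n,I}(\bm w)-\bar\Cb_{n,I}(\bm w)|.
\]
By the remark following Condition~\ref{cond:c2-new}, each margin $C_I$ with $|I|=k$ satisfies Condition~\ref{cond:c2-segers} in dimension $k$ with the same constant $K$, and $\hat C_{n,I}$ is the empirical copula of the subsample $(\bm U_{i,I})_{i=1}^n$. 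This puts each per-margin error within the scope of Theorem~\ref{theo:main-new}, whose constants depend on $C$ only through $k$ and $K$.

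Next I would apply Theorem~\ref{theo:main-new} with the threshold $t_n = c(k,K)\, n^{-1/4} (\log(nd))^{3/4}$, where $c(k,K)$ is chosen large enough that the per-margin tail, multiplied by the $|\mathcal I_k(d)|\le d^k$ margins from the union bound, is summable in $n$. Informally, the tail produced by Theorem~\ref{theo:main-new} evaluated at $t_n$ is expected to behave like $\exp\{-c'(\log(nd))^{3/2}\}$ (reflecting the Gaussian-type leading fluctuations), so that the union-bounded probability is at most $d^k \exp\{-c'(\log(nd))^{3/2}\}$. Under $\log d = o(n^{1/3})$ one has $\log(nd)=o(n^{1/3})$, which makes $(\log(nd))^{3/2}$ dominate $k\log d+2\log n$ and hence the probabilities summable; the same assumption also ensures $t_n \to 0$. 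Borel--Cantelli then upgrades the in-probability bound to the announced almost-sure $O_\as$-rate.

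The main obstacle is therefore entirely localised in the non-asymptotic version Theorem~\ref{theo:main-new}. Turning the proof of Proposition~4.2 in \cite{Seg12} into such a non-asymptotic concentration statement requires quantitative replacements for the asymptotic tools used there: Talagrand--Bousquet-type inequalities for the local oscillations of $\alpha_n$, DKW-type control of $\alpha_{nj}$ uniformly near the axes, and a second-order Taylor expansion of $C_I$ against $\dot C_j$ whose remainder is bounded via the $k$-independent constant $K$ on $\ddot C_{ij}$, all fitted together so that the resulting tail is sharp enough to absorb a union bound over $d^k$ margins. Once this quantitative Stute-type input is assembled uniformly over the margins, the passage to the almost-sure theorem via the choice $t_n \asymp n^{-1/4}(\log(nd))^{3/4}$ and Borel--Cantelli is the short argument sketched above.
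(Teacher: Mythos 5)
Your high-level plan is the same as the paper's: derive the almost-sure rate from the non-asymptotic in-probability Theorem~\ref{theo:main-new} via Borel--Cantelli. The paper does this directly by plugging $\delta_n=(dn)^{-k}$ into Theorem~\ref{theo:main-new}, which already handles the supremum over $W_k$ with failure probability $(9d+d^k)\delta$, and then checking that the induced threshold $\lambda_{n,k}(\delta_n)$ is $\lesssim n^{-1/4}(\log(nd))^{3/4}$; your per-margin union bound over the $d^k$ sets $I$ followed by inversion from a target threshold $t_n$ is a cosmetic restructuring that lands in the same place.

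One quantitative slip is worth flagging, since it would trip up anyone trying to turn your sketch into a rigorous proof. The tail of Theorem~\ref{theo:main-new} at a threshold $t_n \asymp n^{-1/4}(\log(nd))^{3/4}$ is \emph{not} $\exp\{-c'(\log(nd))^{3/2}\}$. Inverting the bound $\lambda \asymp \sqrt{r\log(1/(\delta r))}$ with $r \asymp \sqrt{\log(1/\delta)/n}$ gives, up to constants and lower-order terms, $\lambda \asymp n^{-1/4}(\log(1/\delta))^{3/4}$, hence $\log(1/\delta) \asymp \lambda^{4/3}n^{1/3}$. Substituting $\lambda=t_n$ yields $\log(1/\delta) \asymp c(k,K)^{4/3}\log(nd)$, i.e.\ $\delta \asymp (nd)^{-c(k,K)^{4/3}}$: a polynomial tail in $nd$, not a stretched-exponential one. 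This matters because under the polynomial tail, $d^k\delta$ is summable only if $c(k,K)$ is large enough (roughly $c(k,K)^{4/3}>k+1$), whereas your stated heuristic ``$(\log(nd))^{3/2}$ dominates $k\log d + 2\log n$'' would wrongly suggest that any $c>0$ suffices. You do hedge by saying $c(k,K)$ must be ``chosen large enough,'' so the conclusion is salvageable, but the supporting reasoning is incorrect. The paper sidesteps all of this by fixing $\delta_n=(dn)^{-k}$ up front, which makes the required size of the constant transparent and the summability immediate.
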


Obviously, the assumptions allow for the case $d_n\equiv d= k$ with $k$ fixed. The final rate is then slightly worse than the one in Proposition 4.2 in \cite{Seg12}, with $(\log n)^{3/4}$ instead of $(\log n)^{1/2} (\log \log n)^{1/4}$. This should not have any practical consequences for statistical applications.
Proofs for the following example are provided in Appendix~\ref{sec:example-proofs}.

\begin{example} \label{ex:copulas}
\begin{compactenum}[(a)]
    \item The multivariate Gaussian copula with full-rank  correlation matrix $\Sigma=(\rho_{j,\ell})_{j,\ell=1, \dots d}$, that is, $C(\bm u) = \Phi_\Sigma( x_1, \dots, x_d)$
where $x_j=\Phi^{-1}(u_j)$ with $\Phi$ the standard normal cdf and where $\Phi_\Sigma$ is the cdf of the $\mathcal N_d(\bm 0, \Sigma)$ distribution, forms the basis for graphical models with non-normal margins \citep{LiuLafWas09, LiuHan12}.   
     If $\rho_0:=\max_{j\ne\ell} |\rho_{j,\ell}| <1$, then Condition~\ref{cond:c2-new} is met for any $2 \le k\le d$ with $K=(k-1) [ \rho_0^2/(1-\rho_0^2) ]^{1/2}$. 
    \item The multivariate Hüsler-Reiss copula with parameter matrix $\Lambda =(\lambda_{j,\ell})_{j,\ell=1}^d$, see \cite{Eng15} for details, has recently been extensively used for extremal graphical modelling \citep{Eng24}. If  $\lambda_0:=\min_{j\ne \ell} \lambda_{j,\ell}>0$, then Condition~\ref{cond:c2-new} is met for $k=2$, with some constant $K=K(\lambda_0)$. Recall that $\lambda_{j,\ell} \in [0,\infty)$ parametrizes the bivariate $(j,\ell)$-margin, and interpolates between independence and complete dependence for $\lambda_{j,\ell}=\infty$ and $\lambda_{j,\ell}=0$, respectively.  
    \item The $d$-variate Clayton copula with parameter $\theta\ge 0$, $C_\theta(\bm u) = (u_1^{-\theta} + \dots +u_d^{-\theta} - d + 1)^{-1/\theta}$, satisfies Condition~\ref{cond:c2-new} for any $2 \le k\le d$ with $K=\theta+1$. As the proofs show, similar results can be expected for other Archimedean copulas.
\end{compactenum}
\end{example}

\section{Applications}\label{sec:app}

\subsection{Distributional approximations for maximal pairwise association measures}
\label{sec:app1}

Let $(X,Y)$ denote a bivariate random vector with continuous marginal cdfs and copula $D$. Common association measures for $X$ and $Y$ are given by Spearman's rho, Kendall's tau or Blomquist's beta, which can be expressed in terms of the copula \citep{Sch10}:
$
\rho = 12 \int D \diff \Pi_2 - 3, 
\tau = 4 \int D \diff D - 1$ and $
\beta  = 4 D(\tfrac12, \tfrac12) - 1,
$
where $\Pi_2$ denotes the bivariate indepedence copula. Note that all three association measures are zero if $D$ is the independence copula.

In the high-dimensional regime, for each $I\subset\{1, \dots, d\}$ with $I=2$, let
\begin{align*}
\rho_I = 12 \int C_I \diff \Pi_2 - 3, \qquad
\tau_I = 4 \int C_I \diff C_I - 1, \qquad
\beta_I  = 4 C_I(\tfrac12, \tfrac12) - 1.
\end{align*}
Writing $I=\{\ell,m\}$, the respective sample versions are given by
\begin{align*}
\hat \rho_{n,I} 
&= 
1- \frac{6 \sum_{i=1}^n (R_{i\ell} - R_{im})^2}{n(n-1)(n+1)} 
\\
\hat \tau_{n,I} 
&= \frac2{n(n-1)} \sum_{1 \le i < j \le n} \mathrm{sgn}(X_{i\ell}-X_{j\ell}) \mathrm{sgn}(X_{im}-X_{jm}) \\
\hat \beta_{n,I}  
&= 4 \hat C_{n,I}(\tfrac12, \tfrac12) - 1.
\end{align*}
Recently, \cite{HanCheLiu17, Drt20} studied tests for pairwise independence based on max-type test-statistics like $T_n^{\rho} = \sqrt n \max_{I \in \mathcal I_2(d)} |\hat \rho_{n,I}|$ and  $T_n^\tau = \sqrt n \max_{I \in \mathcal I_2(d)} |\hat \tau_{n,I}|$, where $\mathcal I_2(d)=\{I \subset \{1, \dots, d\}: |I|=2\}$ and where $d=d_n$ grows to infinity. Their derivation of the test statistics' asymptotic behavior under the specific null hypothesis of global independence was based on U-statistics theory. We will provide an alternative approach based on Theorem~\ref{theo:main}, which does not require global independence and which also allows to study $T_n^{\beta} = \sqrt n \max_{I \in \mathcal I_2(d)} |\hat \beta_{n,I}|$; note that $\hat \beta_{n,I}$ is not a U-statistic.
For $I=\{\ell, m\}$, define
\begin{align}
g_I^\rho(\bm u) 
&= \label{eq:rhogi}
12(1-u_\ell)(1-u_m) - 36 \int \Pi_2 \diff C_I + 12\int_0^1 C_I(u_\ell,z)+C_I(z,u_m) \diff z, \\
g_I^\tau(\bm u) 
&=  \nonumber
8C(u_{\ell}, u_{m}) - 4u_{\ell}-4u_{m} + 2 - 2\tau_I,\\
g_I^\beta(\bm u) 
&= \nonumber
4 \Big[ \bm 1(u_{\ell} \le \tfrac12, u_{m} \le \tfrac12) - C_I(\tfrac12, \tfrac12)  \\ \nonumber
& \hspace{3.5cm} - \dot C_\ell(\tfrac 12, \tfrac12) \{ \bm 1(u_{\ell} \le \tfrac12)-\tfrac12\} - \dot C_m(\tfrac 12, \tfrac12) \{ \bm 1(u_{m} \le \tfrac12)-\tfrac12\} \Big],
\end{align}
and let 
$
S_{n,I}^{\gamma}=\frac1{\sqrt n} \sum_{i=1}^n g_I^\gamma(\bm U_i)$ with $\gamma \in \{\rho, \tau, \beta\}.
$
We then have the following result.

\begin{proposition}[Uniform linearization of pairwise association measures] \label{prop:asso}
Let $\gamma \in \{\rho, \tau, \beta\}$. Under the conditions of Theorem~\ref{theo:main} with $k=2$, we have
\begin{align} \label{eq:association}
\max_{I \in \mathcal I_2(d)} |\sqrt{n} \{ \hat \gamma_{n,I} - \gamma_I\} - S_{n,I}^\gamma| = O_\as(n^{-1/4}(\log(nd))^{3/4}) =o_\as(1).
\end{align}
\end{proposition}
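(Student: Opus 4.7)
The plan is to treat the three cases separately, expressing each sample measure as a functional of $\hat C_{n,I}$ and invoking Theorem~\ref{theo:main} with $k=2$ to replace $\Cb_{n,I}$ by $\bar \Cb_{n,I}$ uniformly over $I \in \mathcal I_2(d)$. For $\gamma = \beta$ the identity $\sqrt n(\hat \beta_{n,I} - \beta_I) = 4\Cb_{n,I}(\bm v^I)$ with $\bm v = (\tfrac12,\tfrac12)$ is immediate; Theorem~\ref{theo:main} then yields $\max_I |(\Cb_{n,I} - \bar \Cb_{n,I})(\bm v^I)| = O_\as(n^{-1/4}(\log(nd))^{3/4})$, and the identity $4\bar \Cb_{n,I}(\bm v^I) = S_{n,I}^\beta$ follows by direct inspection of definitions.

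For $\gamma = \rho$ I first reduce to an integral functional of $\hat C_{n,I}$. A classical rank-based calculation (using $\sum_i R_{ij} = n(n+1)/2$ and $\sum_i R_{ij}^2 = n(n+1)(2n+1)/6$) yields $\hat \rho_{n,I} = 12 \int \hat C_{n,I}\, d\Pi_2 - 3 + O(n^{-1})$ uniformly in $I$, hence $\sqrt n(\hat \rho_{n,I} - \rho_I) = 12 \int \Cb_{n,I}\, d\Pi_2 + O(n^{-1/2})$. Because $\Pi_2$ is a probability measure on $[0,1]^2$, integrating the uniform Stute approximation preserves the rate, yielding $12 \int \bar \Cb_{n,I}\, d\Pi_2$ up to $O_\as(n^{-1/4}(\log(nd))^{3/4})$. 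The identity $12 \int \bar \Cb_{n,I}\, d\Pi_2 = S_{n,I}^\rho$ is then verified by Fubini: setting $H_I(u) := \int_0^1 C_I(u,v)\, dv$, an integration by parts on $[0,1]$ together with $H_I(1) = 1/2$ produces the term $(12/\sqrt n)\sum_i \int_0^1 C_I(U_{i\ell}, z)\, dz$ and its symmetric counterpart in the $m$-coordinate, reconstructing $g_I^\rho$ exactly.

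For $\gamma = \tau$ I exploit that $\hat \tau_{n,I}$ is a U-statistic with bounded kernel $h(\bm x, \bm y) = \mathrm{sgn}(x_\ell - y_\ell)\mathrm{sgn}(x_m - y_m)$ and take its Hoeffding decomposition. A direct computation of the first-order projection gives $h_1(u,v) = 4 C_I(u,v) + 1 - 2u - 2v$, whence $\sqrt n \cdot (2/n)\sum_i (h_1(\bm U_i) - \tau_I) = S_{n,I}^\tau$ verbatim. The degenerate second-order part $D_{n,I}$ has bounded centered canonical kernel, and standard moment bounds for degenerate U-statistics of order two give $\norm{D_{n,I}}_{L^p} \lesssim p/n$; a Chernoff inequality and a union bound over $|\mathcal I_2(d)| \le d^2$ pairs then yield $\max_I \sqrt n |D_{n,I}| = O_\as(\log(nd)/\sqrt n)$, well within the target rate under $\log d = o(n^{1/3})$.

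The most involved step is the $\rho$ case: although each individual computation is elementary, combining Fubini, integration by parts on $[0,1]$ applied to $\alpha_{n\ell}$, and the uniformity of Theorem~\ref{theo:main} over all bivariate margins must be executed carefully. Had one instead handled the $\tau$ case via Stute's representation, a delicate two-dimensional integration by parts for $\int C_I\, d\Cb_{n,I}$ (leveraging the vanishing of $\bar \Cb_{n,I}$ on $\partial [0,1]^2$) together with uniform control of the quadratic remainder $(4/\sqrt n) \int \Cb_{n,I}\, d\Cb_{n,I}$ would be required, both of which the U-statistic route avoids.
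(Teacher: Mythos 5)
Your proof is correct and, for $\gamma\in\{\rho,\beta\}$, essentially reproduces the paper's argument (reduction to an integral/point-evaluation functional of $\hat C_{n,I}$, apply Theorem~\ref{theo:main} using boundedness of the domain, and verify the algebraic identity $4\bar\Cb_{n,I}(\tfrac12,\tfrac12)=S_{n,I}^\beta$ resp.\ $12\int\bar\Cb_{n,I}\diff\Pi_2=S_{n,I}^\rho$). For $\gamma=\tau$, however, you take a genuinely different route. The paper stays within the copula-process framework: it writes $\sqrt n(\tilde\tau_{n,I}-\tau_I)/4 = 2\int\Cb_{n,I}\diff C_I + R_{n,I}$ with $R_{n,I}=n^{-1/2}+\int\Cb_{n,I}\diff(\hat C_{n,I}-C_I)$, shows $2\int\bar\Cb_{n,I}\diff C_I = S_{n,I}^\tau$ by an explicit disintegration argument, and controls the quadratic remainder $\max_I|R_{n,I}|$ via a discretization of $\bar\Cb_{n,I}$ on an $n\times n$ grid combined with modulus-of-continuity bounds and Corollary~\ref{corr:supcn} — exactly the ``delicate'' part you flagged. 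You instead invoke the Hoeffding decomposition of the U-statistic $\hat\tau_{n,I}$, which makes $S_{n,I}^\tau$ appear as the H\'ajek projection (indeed $2\{h_1(\bm u)-\tau_I\}=g_I^\tau(\bm u)$ by the computation you indicate), and then control the degenerate part $\sqrt n D_{n,I}$ via moment bounds of the form $\|nD_{n,I}\|_{L^p}\lesssim p$ for degenerate order-two U-statistics with bounded kernel, a Markov/Chernoff argument with $p\asymp\log(nd)$, and Borel--Cantelli, giving the rate $O_\as(\log(nd)/\sqrt n)$, which is stronger than required. Your route sidesteps the two-dimensional integration by parts and the grid approximation entirely; the price is reliance on concentration for degenerate U-statistics, which the paper avoids. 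Both routes are sound, and the paper itself explicitly remarks that $S_{n,I}^\tau$ is the leading Hoeffding-decomposition term without pursuing that line; you have effectively made that remark into a proof. One small expository point: since $d=d_n$ is $n$-dependent, the almost-sure statement requires summability of the union-bound tail over $n$, not merely a per-$n$ Chernoff bound, so Borel--Cantelli should be invoked explicitly.
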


The proof is given in Section~\ref{sec:proofapp}. Note that the linearization for Kendall's tau corresponds to the leading term in the Hoeffding decomposition when treating $\hat \tau_{n,I}$ as a U-statistic of order~2.

Based on Proposition~\ref{prop:asso}, we may easily derive distributional approximation results for 
\[
T_n^\gamma=\sqrt n \max_{I \in \mathcal I_2(d)}|\hat \gamma_{n,I}|,
\]
which, as mentioned before, may be regarded as a test statistic for testing the null hypothesis $H_0:(C_I=\Pi_2 \text{ for all } I \in \mathcal I_2(d))$; 
see \cite{HanCheLiu17, Drt20} for details (note that $H_0=H_0^{(n)}$, which we suppress from the notation). The latter authors provide distributional approximation results from which they derive critical values that guarantee asymptotic control of the type-I error in the case where $d$ grows to infinity and where $C=\Pi_d$. Based on Proposition~\ref{prop:asso}, we may relax this rather restrictive assumption on $C$ considerably: we only require that certain covariances are zero and show that this condition is fulfilled if, for example, $C_I=\Pi_4$ for all $I\in\mathcal I_4(d)$. In fact, further relaxations will be given in the next subsection. 

More precisely, under $H_0$, the functions $g_I^\gamma$ from \eqref{eq:rhogi} simplify to
\begin{align*}
g_I^\beta(\bm u) &= 1-2 \cdot \bm 1( \mathrm{sgn}((u_\ell-\tfrac12)(u_m-\tfrac12))<0),
\\
g_I^\rho(\bm u) &= 12 (u_\ell-\tfrac12)(u_m-\tfrac12), \\
g_I^\tau(\bm u) &=  8 (u_\ell-\tfrac12)(u_m-\tfrac12),
\end{align*}
with $v_\gamma := \Var_C(g_I^\gamma(\bm U_1)) = \bm 1(\gamma\in\{\rho, \beta\})+\tfrac49\bm1(\gamma=\tau)$ for all $I \in \mathcal I_2(d)$.
If we additionally assume that  $C_I=\Pi_4$ for all $I\in\mathcal I_4(d)$, then a straightforward calculation shows that, for $I,J\in \mathcal I_2$,
\begin{align} \label{eq:covg}
r^\gamma_{I,J} = \Cov_C(g_{I}^\gamma(\bm U_i), g_{J}^\gamma (\bm U_i))= v_\gamma \bm 1(I=J). 
\end{align}
Let 
$
(Y_{d,I}^\gamma)_{I \in \mathcal I} \sim \mathcal N_{d(d-1)/2}(0, v_\gamma\bm I_{d(d-1)/2}),
$
where $\bm I_{d(d-1)/2}$ is the $d(d-1)/2$-dimensional identity matrix. Further, let
\[
Z_n^\gamma=\max_{I \in \mathcal I_2(d)} |Y_{d,I}^\gamma| = \max \Big\{ \max_{I \in \mathcal I_2(d)} Y_{d,I}^\gamma, \max_{I \in \mathcal I_2(d)} - Y_{d,I}^\gamma \Big\}.
\]
Theorem 2.2 in \cite{Che13} and Lemma 1 in \cite{Deo72} implies 
the following result.

\begin{corollary} \label{cor:gaussapp}
Suppose that the null hypothesis $H_0:(C_I=\Pi_2 \text{ for all } I \in \mathcal I_2(d))$ is met. If  \eqref{eq:covg} holds (which, for instance, is an immediate consequence of $C_I=\Pi_4$
for all $I\in\mathcal I_4(d)$) and if $d=d_n$ satisfies $\log d =o(n^{1/5})$, then, for $\gamma \in \{\rho, \tau, \beta\}$,
\begin{align} \label{eq:gaussapp}
\lim_{n\to\infty}\sup_{t \in \R} | \Prob(T_n^\gamma \le t) - \Prob(Z_n^\gamma \le t) | = 0.
\end{align}
Moreover, if $d=d_n$ grows to infinity, then $T_{n}^\gamma$ is asymptotically Gumbel-distributed: with $c_n = d_n(d_n-1)/2$, we have, for any $t\in\R$,
\begin{align} \label{eq:gumbelapp}
\lim_{n \to \infty}\Prob\Big[ \sqrt{2 \log c_n}\Big\{  v_\gamma^{-1/2} T_n^\gamma - \Big( \sqrt{2 \log c_n} - \frac{ \log(4\pi \log c_n)-4}{2 \sqrt{2\log c_n}} \Big) \Big\}  \le t \Big] = e^{-e^{-t}}.
\end{align}
\end{corollary}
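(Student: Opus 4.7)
The strategy is to linearize $T_n^\gamma$ via Proposition~\ref{prop:asso}, apply a Gaussian approximation for the resulting high-dimensional maximum, and conclude by the classical Gumbel limit for the maximum of i.i.d.\ Gaussians. Under $H_0$ the summands $g_I^\gamma(\bm U_1)$ are uniformly bounded and centered with variance $v_\gamma$, and assumption \eqref{eq:covg} makes the covariance matrix of $\bigl(g_I^\gamma(\bm U_1)\bigr)_{I\in\mathcal I_2(d)}$ equal to $v_\gamma\bm I_{c_n}$. Proposition~\ref{prop:asso} (with $\gamma_I\equiv 0$ under $H_0$) immediately yields
\[
T_n^\gamma \;=\; \max_{I\in\mathcal I_2(d)}|S_{n,I}^\gamma| \;+\; O_\as\bigl(n^{-1/4}(\log(nd))^{3/4}\bigr),
\]
so it suffices to approximate the distribution of $\max_I|S_{n,I}^\gamma|$ by that of $Z_n^\gamma$ and then to absorb the linearization remainder in Kolmogorov distance.

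For the Gaussian approximation step I would apply Theorem~2.2 of \cite{Che13} to the $2c_n$-dimensional double-sided vector $\bigl(g_I^\gamma(\bm U_i),-g_I^\gamma(\bm U_i)\bigr)_{I\in\mathcal I_2(d)}$, noting that its matched Gaussian analogue has the same distribution for its coordinate-wise maximum as $Z_n^\gamma=\max\{\max_I Y_{d,I}^\gamma,\max_I(-Y_{d,I}^\gamma)\}$. The uniform boundedness of $g_I^\gamma$ together with $c_n\lesssim d^2$ and $\log d=o(n^{1/5})$ drives the CCK approximation error to zero, yielding
\[
\sup_{t\in\R}\bigl|\Prob\bigl(\max_I|S_{n,I}^\gamma|\le t\bigr) - \Prob(Z_n^\gamma\le t)\bigr|\;\longrightarrow\;0.
\]
To transfer this to $T_n^\gamma$, I would combine it with a Nazarov-type anti-concentration inequality for Gaussian maxima, $\sup_{t\in\R}\Prob(|Z_n^\gamma-t|\le\varepsilon)\lesssim \varepsilon\sqrt{\log c_n}$. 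Applied to $\varepsilon=\eta_n:=n^{-1/4}(\log(nd))^{3/4}$, this bound is $\lesssim n^{-1/4}(\log(nd))^{5/4}\to 0$, again because $\log d=o(n^{1/5})$; a standard sandwich argument then yields \eqref{eq:gaussapp}.

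For \eqref{eq:gumbelapp}, I would observe that $v_\gamma^{-1/2}Z_n^\gamma$ is the maximum of $c_n$ i.i.d.\ absolute values of standard Gaussians, so Lemma~1 of \cite{Deo72} applies directly and delivers exactly the normalizing sequences appearing in the statement. The result \eqref{eq:gaussapp} then carries this Gumbel limit from $v_\gamma^{-1/2}Z_n^\gamma$ over to $v_\gamma^{-1/2}T_n^\gamma$. The main technical hurdle lies in the rate accounting of the second paragraph: the almost sure Stute-type remainder, the CCK Gaussian approximation error and the anti-concentration factor $\sqrt{\log c_n}$ all have to vanish simultaneously, and one has to verify that the single condition $\log d=o(n^{1/5})$ is precisely calibrated to meet all three requirements.
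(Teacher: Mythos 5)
Your proposal is correct and matches the paper's approach: linearize via Proposition~\ref{prop:asso}, apply a Chernozhukov--Chetverikov--Kato type Gaussian approximation for the max of the sums $S_{n,I}^\gamma$ over the doubled index set, absorb the Stute remainder via a sandwich/anti-concentration argument, and finish with Lemma~1 of Deo (1972) for the Gumbel limit. The paper compresses the middle step into a single sentence (citing Proposition~\ref{prop:asso} plus Theorem~2.1 of Chernozhukov et al., 2022) whereas you make explicit the Nazarov anti-concentration bound and the rate bookkeeping that shows $\log d = o(n^{1/5})$ is exactly what makes the linearization error $n^{-1/4}(\log(nd))^{3/4}$ disappear in Kolmogorov distance; that is precisely the content that the paper's ``immediate consequence'' phrase leaves to the reader.
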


\begin{proof}[Proof of Corollary~\ref{cor:gaussapp}]
The assumption on $C$ implies that the conditions of Theorem~\ref{theo:main} are met.
The assertion in \eqref{eq:gaussapp} is an immediate consequence of Proposition~\ref{prop:asso} and Theorem 2.1 in \cite{Che22}. The assertion in \eqref{eq:gumbelapp} then follows from Lemma~1 in \cite{Deo72}.
\end{proof}

\begin{remark}
The result in Corollary~\ref{cor:gaussapp} is weaker than the one in \cite{HanCheLiu17} in that we require a slightly stronger condition on the rate of $d=d_n$ (precisely, $\log d =o(n^{1/5})$ compared to $\log d =o(n^{1/3})$), but at the same time much stronger as we only require \eqref{eq:covg}. The latter in turn is, for instance, a straightforward consequence of the assumption that all four-dimensional margins of $C$ are independent; note that \cite{HanCheLiu17} require mutual independence $C=\Pi_d$.

In fact, it is possible to further relax the correlation condition \eqref{eq:covg} in Corollary~\ref{cor:gaussapp}. Indeed, in case that correlation condition is not met, the Gaussian approximation in \eqref{eq:gaussapp} still holds, but with $(Y_{d,I}^\gamma)_{I \in \mathcal I_2(d)}$ being multivariate normal with covariance matrix given by $(r^\gamma_{I,J})_{I,J \in \Ic_2(d)}$ \citep{Che22}. Moreover, under suitable assumptions controlling the strength of the correlations in that matrix, the maximum $\max_{I \in \Ic_2(d)} |Y_{d,I}^\gamma|$ still satisfies \eqref{eq:gumbelapp} as a consequence of Theorem 1 in \cite{Deo72}. An example is worked out below, but we refrain from formulating general sufficient conditions. Instead, in Section~\ref{sec:fer} below, we will provide alternative and powerful bootstrap approximations that allow for controlling type-I error rates under general assumptions controlling only the pairwise dependencies.
\end{remark}

\begin{example}
An example where \eqref{eq:gumbelapp} is met even though \eqref{eq:covg} is not satisfied is given by the following blockwise inductive model: let $V_1, V_2 \sim \mathrm{Uniform}([0,1])$ and $V_3 := (V_1+V_2) - \bm 1(V_1
+V_2>1)$. Then, $\bm V= (V_1, V_2, V_3)^\top$ has uniform margins and cdf $D\ne \Pi_3$ that satisfies $D_I=\Pi_2$ for all $|I|=2$. Some straightforward calculations imply that $r^\rho_{\{1,2\}, \{2,3\}} = r^\rho_{\{1,2\}, \{1,3\}}=2/5$ and $r^\rho_{\{1,3\}, \{2,3\}}=-2/5$.
Now, for $d$ an integer multiple of $3$, let $\bm U \in [0,1]^{d}$ have the same distribution as $d/3$ independent copies of $\bm V$. We then obtain that
\[
r^\rho_{I,J} = 
\begin{cases} 
1, &  I=J, \\ 
- \frac25, & \exists k \in \N_0: I,J \subset \{3k+1, 3k+2, 3k+3\}, I \cap J = \{3k+3\},  \\
\frac25, & \exists k \in \N_0: I,J \subset \{3k+1, 3k+2, 3k+3\}, I \cap J = \{3k+1\} \text{ or }  I \cap J = \{3k+2\},\\
0,& \text{else},
\end{cases}
\]
so \eqref{eq:covg} is violated.
For $d\in\N$, let 
$
(Y_{d,I}^\rho)_{I \in \mathcal I} \sim \mathcal N_{d(d-1)/2}(0, \bm \Sigma),
$
where $\bm \Sigma$ has entries $\bm \Sigma_{I,J} = r^\rho_{I,J}$. Note that $\bm \Sigma$ has exactly $2d$ off-diagonal elements that are non-zero, of which
$2d/3$ are equal to $-2/5$ and $4d/3$ are equal to $2/5$. 
To apply Theorem 1 in \cite{Deo72}, let $I_1, I_2, \dots$ denote an enumeration of all sets $\{i,j\}$ with $i,j \in \N$ and $i\ne j$. 
%(as a convention, we always write the sets with $i<j$). 
The enumeration should be in such a way that $I_1=\{1,2\}, I_2=\{1,3\}, I_3=\{2,3\}$ and:
\begin{compactitem}
%\item $I_1=\{1,2\}, I_2=\{1,3\}, I_3=\{2,3\}$. 
\item For any $k\in \{4,7,10,\dots\}$, sets containing an index $\ell \ge k+3$ appear in the enumeration only after all sets $\{i,j\}$ with $i,j \le k+2$ have appeared (for instance, the sets $I_4, \dots, I_{15}$ are made up from all sets $\{i,j\}$ with $i\in \{1, \dots, 6\}$ and $j \in \{4,5,6\}$, and we then move on with the sets containing all $i\in \{1, \dots, 9\}$ and $j \in \{7,8,9\}$). 
\item For any $k\in \{4,7,10,\dots\}$, the first appearance of $k$ in the enumeration is at index $n=(k-1)(k-2)/2+1$ and it is of the form $I_n=\{k, k+1\}$ and then $I_{n+1}=\{k, k+2\}, I_{n+2}=\{k+1, k+2\}$ (for instance, $I_4=\{4,5\}, I_5=\{4,6\},I_6=\{5,6\}$ and $I_{16}=\{7,8\}, I_{17}=\{7,9\}, I_{18}=\{8,9\}$).
\end{compactitem}
Let $(\eta_i)_{i\in\N}$ denote a Gaussian sequence with $\Exp[\eta_i]=0$ and
$r_{ij}=\Cov(\eta_i, \eta_j) := r^\rho_{I_i, I_j}$ and note that, by construction, $r_{ij}=0$ for $|i-j|\ge 3$ and $|r_{ij}| \le 2/5$ for $|i-j|\le 2$. We may hence apply Theorem 1 in \cite{Deo72} to obtain that the maximum $\max_{i=1}^n |\eta_i|$ is asymptotically Gumbel with the same scaling sequences as if the $\eta_i$ were iid standard normal. This implies the assertion since $(\mathcal L(Z_n^\rho))_n$ is just a subsequence of $(\mathcal L(\max_{i=1}^n \eta_i))_n$ in the space of probability measures on the real line, where $\mathcal L(X)$ denotes the distribution of $X$.
\end{example}

\subsection{Controlling family-wise error rates in pairwise independence tests} \label{sec:fer}

Corollary~\ref{cor:gaussapp} allows to deduce asymptotic control of the type-I error rate of tests for the global null hypothesis $H_0:(C_I=\Pi_2 \text{ for all } I \in \mathcal I_2(d))$, based on the test statistics $T_n^\gamma=\sqrt n \max_{I \in \mathcal I_2(d)}|\hat \gamma_{n,I}|$ with $\gamma\in\{\rho, \tau,\gamma\}$, for all $H_0$-copulas that additionally satisfy $C_I=\Pi_4$ for all $I\in \mathcal I_4(d)$. While this is actually much weaker than what is required in related results from the literature \citep{HanCheLiu17},
it is still not satisfying. First of all, by design, tests based on $\sqrt{n}|\hat \gamma_{n,I}|$ should actually be regarded as tests for $H_{0,I}^\gamma: \gamma_I=0$, and control of the type-I error rate should be derived under $\gamma_I=0$, and not under $C_I=\Pi_2$, yet even $C=\Pi_d$. Next, in view of the multiplicity of the global testing problem, it would be desirable to control the  family-wise error rate, uniform over a large class of data-generating processes; see \eqref{eq:fer} below for a precise definition. 

More precisely, specializing to the case $\gamma=\rho$ for simplicity, let $H_I: \rho_I \le 0$ (extensions to testing $H_I:\rho_I=0$ are straightforward, but notationally more involved). We are looking for a procedure that simultaneously  tests $H_I$ against $H_I':\rho_I > 0$ with strong control of the family-wise error rate. This means that, for given confidence level $\alpha$, we want to control the probability of at least one false rejection by $\alpha+o(1)$, uniformly over a large class of data generating processes. 
Formally, fix some constant $K>0$ and an integer sequence $d=d_n$ as in Theorem~\ref{theo:main}, i.e., with $\log d =o(n^{1/3})$. Let $\Gamma=\Gamma^{(n)}(K)$ denote the set of all $d$-dimensional copulas $C^{(n)}$ for which Condition~\ref{cond:c2-new} is met with constant $K$ and $k=2$. Let $C_0=C_0^{(n)}\in \Gamma$ denote the true copula, and for $I \in \mathcal I=\mathcal I_2(d)$ let $\Gamma_I = \Gamma_I^{(n)} \subset \Gamma$ denote the subset of copulas for which $H_I$ is met.
For each $\mathcal J \subset \mathcal I$, let $\Gamma(\mathcal J)=\Gamma^{(n)}(\mathcal J)=(\bigcap_{I \in \mathcal J}  \Gamma_I) \cap (\bigcap_{I \notin \mathcal J} \Gamma_I^c)$, where $\Gamma_I^c=\Gamma \setminus \Gamma_I$, i.e., $\Gamma(\mathcal J)\subset \Gamma$ is the subset of copulas for which $H_I$ holds if and only if $I\in \mathcal J$. Asymptotic strong control of the family-wise error rate means
\begin{align}
\label{eq:fer}
\limsup_{n\to\infty}\sup_{\mathcal J \subset \mathcal I} \sup_{C_0 \in \Gamma^{(n)}(\mathcal J)} \Prob_{C_0}( \text{reject at least one hypothesis } H_I \text{ with } I \in \mathcal J) \le \alpha.
\end{align}

To ensure strong control of the family-wise error rate, we follow \cite{Che13} and apply the stepdown procedure from \cite{RomWol05} combined with the multiplier bootstrap. 
Write $x_{i,I} = g_I^\rho(\bm U_i)$ with $I=\{\ell,m\}$ and $g_I^\rho$ from \eqref{eq:rhogi}. Observable counterparts of $x_{i,I}$ are given by
\begin{align*}
\hat x_{i,I} 
&= 
12(1-\hat U_{i,\ell})(1-\hat U_{i,m}) - 36 \int \Pi_2 \diff \hat C_{n,I} + 12 \int_0^1  \hat C_{n,I}(\hat U_{i,\ell},z) + \hat C_{n,I}(z, \hat U_{i,m}) \diff z
\\&= 
12(1-\hat U_{i,\ell})(1-\hat U_{i,m}) \\
&\hspace{2.1cm} 
- \frac{12}n \sum_{j=1}^n \Big\{ 3 \hat U_{j,\ell} \hat U_{j,m} - \bm 1(\hat U_{j,\ell} \le \hat U_{i,\ell}) (1-\hat U_{j,m}) - \bm 1(\hat U_{j,m} \le \hat U_{i,m}) (1-\hat U_{j,\ell}) \Big\}.
\end{align*}
Let $e_1, \dots, e_n$ denote an iid sample of standard normal random variables that is independent of the observed data, and define $\hat T_{n,I}= n^{-1/2} \sum_{i=1}^n e_i \hat x_{i,I}$. Heuristically, if $H_I$ is met, the conditional distribution of $\hat T_{n,I}$ given $\bm X_1, \dots, \bm X_n$ should stochastically dominate the distribution of  $T_{n,I} := \sqrt n \hat \rho_{n,I}$ (and be close if $\rho_I=0$, i.e., on the boundary of $H_I$). For $\mathcal J \subset \mathcal I$, let $\hat c_{1-\alpha, \mathcal J}$ denote the $(1-\alpha)$-quantile of the conditional distribution of $\max_{I \in \mathcal J} \hat T_{n,I}$ given $\bm X_1, \dots, \bm X_n$ (which can be approximated to an arbitrary precision by repeated simulation). The stepdown procedure works as follows: in the first step, let $\mathcal J(1):= \mathcal I$. Reject all null hypotheses $H_I$ for which $T_{n,I}>\hat c_{1-\alpha, \mathcal I}$. If no null hypothesis is rejected, then stop. Otherwise, let $\mathcal J(2)$ denote the set of null hypotheses that were not rejected. On step $m \ge 2$, let $\mathcal J(m)$ denote the set of null hypotheses that were not rejected up to step $m$. Reject all null hypotheses $H_I$ for which $T_{n,I}>\hat c_{1-\alpha, \mathcal J(m)}$. If no null hypothesis is rejected, then stop. Otherwise, let $\mathcal J(m+1)$ denote the set of null hypotheses that were not rejected. Repeat until the algorithm stops, which eventually provides a decision on the acceptance or rejection of each $H_I$ with $I\in\mathcal I$. 

Under two additional weak assumptions, one on the class of data-generating processes and one on $d=d_n$, we obtain strong control of the family-wise error rate.  

\begin{theorem}[Strong control of the family-wise error rate for $\gamma=\rho$]
\label{theo:fwr}
Fix $c_1,K>0$. For $n\in\N$ let $\Gamma^{(n)}=\Gamma^{(n)}(c_1, K)$ denote the set of $d=d_n$-dimensional copulas $C=C^{(n)}$ for which Condition~\ref{cond:c2-new} is met with constant $K$ and with $k=2$, and for which $\Var_C(x_{1,I}^2) \ge c_1$  for all $I\in\mathcal I_2(d)$. If there exists a constant $c>0$ such that $(\log d)^7\le n^{1-c}$, then \eqref{eq:fer} is met.
\end{theorem}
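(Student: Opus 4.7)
The plan is to combine Proposition~\ref{prop:asso}, the Romano--Wolf stepdown reduction of \cite{RomWol05}, and the high-dimensional Gaussian and multiplier-bootstrap theory of \cite{Che13, Che22}. First, I would reduce to a single-step bound: by monotonicity of bootstrap quantiles in the index set, $\hat c_{1-\alpha,\mathcal J}\le\hat c_{1-\alpha,\mathcal J(m)}$ whenever $\mathcal J\subset\mathcal J(m)$. Since $\mathcal J(m)\supset\mathcal J$ as long as no false rejection has yet occurred, a false rejection of some $H_I$ with $I\in\mathcal J$ forces $\max_{I\in\mathcal J}T_{n,I}>\hat c_{1-\alpha,\mathcal J}$, where $T_{n,I}=\sqrt n\,\hat\rho_{n,I}$. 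It therefore suffices to show
\[
\limsup_{n\to\infty}\sup_{\mathcal J\subset\mathcal I}\sup_{C_0\in\Gamma^{(n)}(\mathcal J)}\Prob_{C_0}\bigl(\max_{I\in\mathcal J}T_{n,I}>\hat c_{1-\alpha,\mathcal J}\bigr)\le\alpha.
\]

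Next, I would establish the Gaussian and infeasible-bootstrap approximations. Under $H_I$ for $I\in\mathcal J$ we have $\rho_I\le 0$, so Proposition~\ref{prop:asso} gives $\max_{I\in\mathcal J}T_{n,I}\le\max_{I\in\mathcal J}S_{n,I}^\rho+O_\as(n^{-1/4}(\log(nd))^{3/4})$ with implicit constant depending only on $K$. The summands $x_{i,I}=g_I^\rho(\bm U_i)$ are uniformly bounded by some absolute $M$ and centred, and $\Var_C(x_{1,I}^2)\ge c_1$ together with $|x_{1,I}|\le M$ yields the uniform variance lower bound $\Exp_C[x_{1,I}^2]\ge c_1/M^2$. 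Under the rate $(\log d)^7\le n^{1-c}$, the Gaussian and multiplier-bootstrap approximation theorems of \cite{Che13, Che22} then give that both $\max_{I\in\mathcal J}S_{n,I}^\rho$ and the infeasible bootstrap maximum $\max_{I\in\mathcal J}\tilde T_{n,I}$, with $\tilde T_{n,I}=n^{-1/2}\sum_ie_ix_{i,I}$, are uniformly Kolmogorov-close to the centred Gaussian maximum with the true covariance $(r_{I,J}^\rho)_{I,J\in\mathcal J}$, uniformly in $C_0\in\Gamma^{(n)}$.

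To pass from the infeasible bootstrap to the feasible one, I would use $|\hat U_{ij}-U_{ij}|\le 1/n$, the uniform consistency $\max_{I\in\mathcal I_2(d)}\sup_{\bm w\in[0,1]^2}|\hat C_{n,I}(\bm w)-C_I(\bm w)|=O_\as(n^{-1/2}(\log(nd))^{1/2})$ (which follows from Theorem~\ref{theo:main} together with classical empirical-process bounds for $\alpha_n$ evaluated on rectangles) and the Lipschitz structure of $g_I^\rho$ in its arguments to obtain $\max_{I\in\mathcal I,\,i\le n}|\hat x_{i,I}-x_{i,I}|=O_\as(n^{-1/2}(\log(nd))^{1/2})$. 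A conditional Gaussian maximal inequality then yields $\max_{I\in\mathcal I}|\hat T_{n,I}-\tilde T_{n,I}|=o_\Prob(1)$ under the rate assumption, so the feasible bootstrap quantile $\hat c_{1-\alpha,\mathcal J}$ is asymptotically equivalent to its infeasible counterpart. Combining these approximations with Nazarov's anti-concentration inequality for Gaussian maxima, applicable thanks to the uniform lower bound $\min_I\Var(Y_I^{(\mathcal J)})\ge c_1/M^2>0$, converts the distributional gaps into the $\alpha+o(1)$ bound on the single-step rejection probability.

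The main technical obstacle is to make every approximation uniform simultaneously in $\mathcal J\subset\mathcal I$ and $C_0\in\Gamma^{(n)}(c_1,K)$. This is tractable because the constants in the Gaussian and bootstrap approximation theorems of \cite{Che13, Che22} depend on the distribution only through the moment and variance quantities controlled by $(c_1,K)$, and uniformity over $\mathcal J$ is automatic since any subset $\mathcal J\subset\mathcal I$ corresponds to a subvector of the same high-dimensional Gaussian.
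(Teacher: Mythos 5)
Your proposal is correct and follows essentially the same route as the paper: the two key approximation bounds you establish — $\max_{I}|\sqrt{n}(\hat\rho_{n,I}-\rho_I)-S_{n,I}^\rho|=O_\as(n^{-1/4}(\log(nd))^{3/4})$ from Proposition~\ref{prop:asso}, and $\max_{i,I}|\hat x_{i,I}-x_{i,I}|=O_\as(n^{-1/2}(\log(nd))^{1/2})$ from the Lipschitz structure of $g_I^\rho$ together with Corollary~\ref{corr:supcn} — are exactly the quantities $\Delta_1,\Delta_2$ the paper controls, and your variance lower bound $\Exp_C[x_{1,I}^2]\ge c_1/M^2$ supplies Condition~(M). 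The only difference is one of presentation: the paper invokes Theorem~5.1 of \cite{Che13} as a black box (which packages the Romano--Wolf monotonicity reduction, the feasible-vs-infeasible bootstrap comparison, and Nazarov-type anti-concentration), whereas you sketch a re-derivation of that theorem from its ingredients.
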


\subsection{Max-type statistics based on the Moebius transform of the empirical copula process}
\label{sec:app3}

The problem of testing independence in random vectors of fixed dimension based on copula methods has attracted a lot of attention, see  \cite{Deh81, GenRem04, GenQueRem07, KojHol09, GenNesRemMur19}, among others. The methodology has recently been transferred to the high-dimensional regime via `sum-type' statistics in \cite{BucPak24}. Our main result in this paper allows to tackle respective `max-type' statistics; the relation of the current section to \cite{BucPak24} is hence comparable to the relation of \cite{HanCheLiu17} to \cite{LeuDrt18}, who consider certain `max-type' and `sum-type' tests for pairwise independence, respectively. 

For a function $G:[0,1]^d \to \R$ and $I\subset \{1, \dots, d\}$ with $|I|\ge 2$, consider the Moebius transform of $G$ with respect to $I$, defined as
\[
\Mc_I(G)(\bm u) =  \sum_{B \subset I} (-1)^{|I \setminus B|} G(\bm u^{B}) \prod_{j \in I \setminus B} G(u^{\{j\}}),
\]
where $\bm u \in [0,1]^d$ and where $\bm u^B \in [0,1]^d$ 
has $j$th component
$u_j^B =  u_j \bm 1(j\in B) + \bm 1(j\notin B)$. The tuple $(\Mc_I(G))_{I \subset \{1, \dots, d\}:|I|\ge 2}$ is called the Moebius decomposition of G. Note that $C=\Pi_d$ is equivalent to the fact that $\Mc_I(C)=0$ for all $I \subset \{1, \dots, d\}$ with $|I| \ge 2$ (\cite{GhoKulRem01}, Proposition 2.1).
Now $\Mc_I(C)=0$ is equivalent to $\int_{[0,1]^d} \Mc_I(C)^2 \diff \Pi_d=0$, and we 
may use 
\[
S_{n,I} = n \int_{[0,1]^d} \{ \Mc_I(\hat C_n)(\bm u) \}^2 \diff \Pi_d(\bm u)
\]
as an empirical version of the latter; see \cite{BucPak24}. Note that 
\begin{align*} %\label{eq:snam}
S_{n,I} 
=
\frac1n \sum_{i_1, i_2=1}^n   \prod_{j \in I}  I_{i_1,i_2}^{(j)},
\quad
I_{i_1,i_2}^{(j)} = \frac{(n+1)(2n+1)}{6n^2} + \frac{R_{i_1j}(R_{i_1j}-1)}{2n^2} + \frac{R_{i_2j}(R_{i_2j}-1)}{2n^2}  - 
\frac{\max(R_{i_1j}, R_{i_2j})}{n},
\end{align*}
which allows for easy computation (note that the formula for $I_{i_1, i_2}^{(j)}$ is slightly different from the one in Formula (2.3) in \cite{BucPak24} due to a slightly different definition of the pseudo-observations -- the difference is asymptotically negligible though). We are interested in distributional approximation results for
$M_n(k)=\max_{I\in \mathcal I_k(d)} S_{n,I} $,
which can be considered as a test statistic for $H_k:(\Mc_I(C)=0$ for all $I\in \mathcal I_k(d))$. The  hypothesis is related to higher-order Lancaster interactions \citep{Lan69} as discussed in \cite{BucPak24}, and in the case $k=2$ corresponds to pairwise independence.

As in related papers \citep{HanCheLiu17, LeuDrt18, Drt20}, we only consider distributional approximations in the case $C=\Pi_d$, which we assume from now on. The main difficulty which prevents us from directly applying known results from the literature (notably Theorem~4.2 in \cite{Drt20} on degenerate U-statistics) are the ranks in $S_{n,I}$, and we proceed by sketching how Theorem~\ref{theo:main} helps to remove the ranks. First, consider a variant of the Moebius transform defined as 
\[
\widebar \Mc_I(G)(\bm u)  = \sum_{I \subset A} (-1)^{|I \setminus B|} G(\bm u^{B}) \prod_{j \in I \setminus B} u_j,
\]
and note that, unlike $\Mc_I$, $\widebar \Mc_I$ is linear in $G$.
We will show below that 
\begin{align} \label{eq:moeb0}
\max_{I \subset \{1, \dots, d\}: 2 \le |I| \le k} \|\Mc_I(\hat C_n)  - \widebar \Mc_I(\hat C_n)\|_\infty \le 2^k \frac1n.
\end{align}
By linearity of $\widebar \Mc_I$ and the fact that $\widebar \Mc_I(\Pi_d) =\Mc_I(\Pi_d)=0$, we obtain that 
\begin{align} \label{eq:moeb1}
\max_{I \subset \{1, \dots, d\}: 2 \le |I| \le  k} \|\sqrt n \Mc_I(\hat C_n)  - \widebar \Mc_I(\Cb_n)\|_\infty \le 2^k \frac1{\sqrt n} =O(n^{-1/2}).
\end{align}
Next, define $\beta_n(\bm u) = \sum_{j=1}^d ( \prod_{\ell \ne j} u_\ell) \alpha_{nj}(u_j)$ such that $\bar \Cb_n$ from \eqref{eq:barcn} can be written as $\bar \Cb_n=\alpha_n - \beta_n$. It has been shown in the proof of Proposition 2 in \cite{GenRem04} that $\widebar \Mc_I(\beta_n)=0$ for all $I$ with $|I| \ge 2$. Therefore, if $\log d =o(n^{1/3})$, it follows from Theorem~\ref{theo:main} and linearity of $\widebar \Mc_I$ that
\begin{align} \label{eq:moeb2}
\widebar \Mc_I(\Cb_n) 
&= \nonumber
\widebar \Mc_I(\bar \Cb_n) + \widebar \Mc_I(\Cb_n - \bar \Cb_n) 
\\&= 
\widebar \Mc_I(\alpha_n) + O_\as(n^{-1/4}(\log(nd))^{3/4})
= \widebar \Mc_I(\alpha_n) + o_\as(1),
\end{align}
where the error term is uniform in $I \subset \{1, \dots, d\}$ such that $2 \le |I| \le k$. Overall, from \eqref{eq:moeb1} and \eqref{eq:moeb2} and since the integration domain is finite, we obtain that
\[
\max_{2 \le |I| \le k} | S_{n,I} - \bar S_{n,I}| = o_{\as}(1),
\]
where
\[
\bar S_{n,I} := \int_{[0,1]^d} \widebar \Mc_I(\alpha_n)^2 \diff \Pi_d
=
\frac1n\sum_{i_1,i_2=1}^n h_I(\bm U_{i_1}, \bm U_{i_2})
\]
with
\[
h_I(\bm u, \bm v) = \prod_{j \in I } \Big\{ \frac{u_j^2}2 + \frac{v_j^2}2-\max(u_j, v_j) + \frac13\Big\}.
\]
We thus have removed the ranks and decomposing $\bar S_{n,I}=U_{n,I} + V_{n,I}$ where
\begin{align} \label{eq:univni}
U_{n,I} = \frac1{n}\sum_{1 \le i_1 \ne i_2 \le n} h_I(\bm U_{i_1}, \bm U_{i_2}), \qquad V_{n,I} = \frac1{n}\sum_{1 \le i \le n} h_I(\bm U_{i}, \bm U_{i}),
\end{align}
we may proceed by applying available distributional limit results for maxima of (degenerate) U-statistics of order 2 for independent observations; notably Theorem 4.2 in \cite{Drt20}. In view of the fact that such results are only available for $|I| =2$ to the best of our knowledge, we subsequently restrict attention to this case (which is arguably the most important case for statistical applications). We then obtain the following the result, which is proven in Section~\ref{sec:proofapp}.

\begin{proposition} \label{prop:moeb}
Let $C=\Pi_d$, and assume that $d=d_n$ grows to infinity such that $\log d =o(n^{1/8-\delta})$ for some $\delta>0$. Then
\begin{align} \label{eq:sni} 
\lim_{n\to\infty}\Prob\Big( \pi^4 \max_{I\in \mathcal I_2(d)}S_{n,I} - u_n \le y \Big) = \exp\Big\{ - \Big(\frac{\kappa^2}{8\pi}\Big)^{1/2} \exp\Big(-\frac{y}2\Big) \Big\}.
\end{align}
where
\[
u_n =  4 \log d_n - \log \log d_n -  \frac{\pi^4}{36}, \qquad
\kappa^2 = 
2 \prod_{n=2}^\infty \frac{\pi/n}{\sin(\pi/n)} \approx 6.086.
\]
\end{proposition}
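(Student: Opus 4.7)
The plan is to build on the reduction to the ``rank-free'' form $\bar S_{n,I}=U_{n,I}+V_{n,I}$ from \eqref{eq:univni} already carried out above, handle the diagonal term $V_{n,I}$ by uniform concentration, and then invoke Theorem~4.2 of \cite{Drt20} for maxima of doubly degenerate $U$-statistics applied to $(U_{n,I})_{I\in\mathcal I_2(d)}$.

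First, the reduction $\max_{|I|=2}|S_{n,I}-\bar S_{n,I}|=o_\as(1)$ is already in place under the weaker assumption $\log d=o(n^{1/3})$ implied by our rate. For the diagonal term, noting that
\[
V_{n,I}=\frac{1}{n}\sum_{i=1}^n (U_{i,\ell}^2-U_{i,\ell}+\tfrac13)(U_{i,m}^2-U_{i,m}+\tfrac13), \qquad I=\{\ell,m\},
\]
is a sample mean of uniformly bounded i.i.d.\ summands with common expectation $\Exp[U^2-U+1/3]^2=(1/6)^2=1/36$ under $C=\Pi_d$, a Bernstein-type concentration combined with a union bound over the $\binom{d}{2}$ pairs yields $\max_{I\in\mathcal I_2(d)}|V_{n,I}-\tfrac{1}{36}|=O_\Prob(\sqrt{\log d /n})=o_\Prob(1)$. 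Consequently,
\[
\pi^4\max_{I\in\mathcal I_2(d)} S_{n,I}=\pi^4\max_{I\in\mathcal I_2(d)} U_{n,I}+\pi^4/36+o_\Prob(1),
\]
and by Slutsky the problem reduces to a Gumbel approximation for $\pi^4\max_I U_{n,I}$ with only a deterministic shift of $\pi^4/36$ still to be added at the end.

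Next I would identify the spectral decomposition of the product kernel $h_I=g\otimes g$, where $g(u,v)=u^2/2+v^2/2-\max(u,v)+1/3=\int_0^1(\bm 1(u\le z)-z)(\bm 1(v\le z)-z)\diff z$. From the latter representation $g$ is positive semi-definite and annihilates constants; solving the corresponding integro-differential equation $\lambda\phi''+\phi=c$ with the induced Neumann conditions $\phi'(0)=\phi'(1)=0$, its non-trivial eigenpairs are $(k\pi)^{-2}$ and $\sqrt 2\cos(k\pi\,\cdot)$ for $k\ge 1$. Hence $h_I$ has positive eigenvalues $(mn\pi^2)^{-2}$, $m,n\ge 1$, with trace equal to $(\sum_k k^{-2})^2/\pi^4=1/36$, and $\pi^4 U_{n,I}$ converges weakly to $T-\pi^4/36$ where $T:=\sum_{m,n\ge 1}(mn)^{-2}Y_{m,n}^2$ with $(Y_{m,n})$ i.i.d.\ standard normal.

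The central step is to apply Theorem~4.2 of \cite{Drt20} to $(U_{n,I})_{I\in\mathcal I_2(d)}$. The required boundedness/non-degeneracy conditions on $h_I$ are immediate from the explicit spectral data (the leading eigenvalue is simple and equals $\pi^{-4}$), and the rate $\log d=o(n^{1/8-\delta})$ is precisely the regime of validity of the theorem; the Gumbel constant is fixed by the tail of the limit $T$. A saddle-point / exponential-tilting argument for a weighted sum of $\chi_1^2$ with simple largest coefficient $c_{1,1}=1$ gives
\[
\Prob(T>u)\sim\kappa\sqrt{2/(\pi u)}\,e^{-u/2}, \qquad \kappa^2=\prod_{(m,n)\ne(1,1)}\bigl(1-(mn)^{-2}\bigr)^{-1},
\]
and the displayed form $\kappa^2=2\prod_{n\ge 2}(\pi/n)/\sin(\pi/n)$ follows from the Weierstrass factorization $\sin(\pi x)=\pi x\prod_k(1-x^2/k^2)$ at $x=1/n$ together with the telescoping $\prod_{n\ge 2}(1-n^{-2})^{-1}=2$. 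A Poisson-type heuristic with $N=\binom{d}{2}\sim d^2/2$ then pins down the centering and yields the Gumbel law $\exp\{-(\kappa^2/(8\pi))^{1/2}\exp(-y/2)\}$; adding back the $\pi^4/36$ shift contributed by $V_{n,I}$ produces the centering $u_n$ of the statement.

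The main obstacle is the verification of the detailed hypotheses of Theorem~4.2 in \cite{Drt20} for our specific product kernels --- in particular, controlling the joint tail behavior and the dependence structure across the $\binom{d}{2}$ different indices $I$ in the regime $\log d=o(n^{1/8-\delta})$ --- together with the precise bookkeeping of constants in the saddle-point expansion of $\Prob(T>u)$ and of the $\pi^4/36$ shift contributed by $V_{n,I}$, which must combine to give the centering $u_n$ displayed in the proposition.
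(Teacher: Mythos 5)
Your proposal follows the paper's overall strategy (reduce to $\bar S_{n,I}=U_{n,I}+V_{n,I}$, invoke Theorem~4.2 of Drton et al.\ for the $U$-part, handle the diagonal $V$-part separately), but it departs from the paper at a decisive point: the treatment of $V_{n,I}$.

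You observe that $h_I(\bm U_i,\bm U_i)=\prod_{j\in I}(U_{ij}^2-U_{ij}+\tfrac13)$ has mean $(1/6)^2=1/36$, so $V_{n,I}\to 1/36$ rather than to $0$. This is correct, and your Bernstein-plus-union-bound argument for $\max_I|V_{n,I}-\tfrac{1}{36}|=O_\Prob(\sqrt{\log d/n})$ is both sound and considerably more elementary than the paper's argument. In fact, the paper's own claim \eqref{eq:vno} that $\max_I|V_{n,I}|=o_\Prob(1)$ cannot be right, precisely because $\Exp[h_I(\bm U,\bm U)]=1/36\neq 0$; the paper's appeal to the high-dimensional Gaussian approximation of \cite{Che22} for $\sqrt n\max_I|V_{n,I}|$ implicitly treats $V_{n,I}$ as centered, which it is not, and the stated variance $1/90$ also does not agree with $\Var\bigl((U_1^2-U_1+\tfrac13)(U_2^2-U_2+\tfrac13)\bigr)=(1/30)^2-(1/36)^2=11/32400$.

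However, your corrected treatment of $V_{n,I}$ opens up a bookkeeping inconsistency that you must resolve before the argument closes. If \eqref{eq:gumbelapp-u} in the paper is correct -- i.e.\ $\pi^4\max_I U_{n,I}$ is Gumbel-centred at $u_n=4\log d-\log\log d-\pi^4/36$, which is exactly what your own saddle-point and Poisson-clumping calculation yields for the limit $T-\pi^4/36$ of $\pi^4 U_{n,I}$ (the $-\pi^4/36$ in $u_n$ comes from the $e^{-\pi^4/72}$ factor in the tail of the \emph{centred} weighted chi-square, not from $V_{n,I}$) -- then combining this with $V_{n,I}=\tfrac1{36}+o_\Prob(1)$ uniformly gives $\pi^4\max_I S_{n,I}$ centred at $u_n+\pi^4/36=4\log d-\log\log d$, which is \emph{not} the $u_n$ of the proposition. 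Your closing sentence that ``adding back the $\pi^4/36$ shift contributed by $V_{n,I}$ produces the centering $u_n$'' is therefore off by exactly $\pi^4/36$: it would require the Gumbel for $\pi^4\max_I U_{n,I}$ to be centred at $u_n-\pi^4/36=4\log d-\log\log d-2\pi^4/36$, which neither the paper's \eqref{eq:gumbelapp-u} nor your own heuristic supports. You rightly flagged the ``precise bookkeeping of constants'' as the main obstacle; it is in fact a genuine gap, not merely a technicality: you need to pin down exactly what centering Theorem~4.2 of \cite{Drt20} delivers for $\max_I U_{n,I}$ and check whether it is $u_n$ or $u_n-\pi^4/36$, because only the latter reconciles the proposition's stated $u_n$ with your (correct) $V_{n,I}\to 1/36$, while the Poisson heuristic points to the former.
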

Note that the limit distribution is of Gumbel-type, and that it is the same as for max-type tests involving Hoeffding's $D$, Blum–Kiefer–Rosenblatt's $R$ and Bergsma–Dassios–Yanagi\-moto's~$\tau^*$, see Examples 3.1-3.3 and Theorem 4.2 in \cite{Drt20}; it is only the scaling sequences on the right that are slightly different than for those examples.

\section{Proof of Theorem~\ref{theo:main}}
\label{sec:proofs}

Theorem~\ref{theo:main} will be a straightforward consequence of the Borel-Cantelli lemma and a non-asymptotic version of Stute's representation in probability. %We start by stating the latter result.

\begin{theorem}[Non-asymptotic Stute representation in probability] 
\label{theo:main-new}
Fix $k \in \N_{\ge 2}$ and $d\in\N_{\ge 2}$ with $k\le d$ and suppose that Condition~\ref{cond:c2-new} is met. Then there exist constants $L_1, L_2>0$ only depending on $k$ and $K$ such that, for any $\delta>0$ and $n \in \N$ with $e^{-n/32} \le \delta \le e^{-1}$, 
\[
\sup_{\bm u \in W_k} |\Cb_n(\bm u)- \bar \Cb_n(\bm u)| \le  
\frac{2k}{\sqrt n} + L_1 \sqrt{r \log\Big(\frac{L_2}{\delta r}\Big)} = \lambda_{n,k}(\delta) 
\]
with probability at least $1-(9d+d^k)\delta$,
where
\begin{align} \label{eq:r-def}
r := r_{n,\delta} :=  \sqrt{\frac1{2n} \log\Big(\frac1\delta\Big)}
\end{align}
(note that $(2n)^{-1/2} \le r\le 1/8$ by the assumption $e^{-n/32} \le \delta \le e^{-1}$).
\end{theorem}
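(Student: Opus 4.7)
My approach adapts Segers' proof of his Proposition~4.2 to the non-asymptotic, high-dimensional setting, with uniform bookkeeping over all $k$-subsets of $\{1,\dots,d\}$. For $\bm u \in W_k$ with $I := I_{\bm u}$, the identities $\hat C_n(\bm u) = \hat C_{n,I}(\bm u_I)$, $C(\bm u)=C_I(\bm u_I)$ and $\bar\Cb_n(\bm u) = \bar\Cb_{n,I}(\bm u_I)$ reduce the task to bounding
\[
\max_{I \in \mathcal I_k(d)} \sup_{\bm w \in [0,1]^I} |\Cb_{n,I}(\bm w) - \bar\Cb_{n,I}(\bm w)|
\]
(margins of cardinality strictly less than $k$ are handled identically with smaller constants). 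The plan is first to prove a non-asymptotic Stute-type bound at fixed dimension $k$ with constants depending only on $k$ and $K$, then to union-bound over the $\binom{d}{k} \le d^k$ choices of $I$; this is what supplies the $d^k\delta$ contribution in the failure probability.

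Fix $I$ of cardinality $k$, let $F_{nj}$ denote the $j$th marginal empirical cdf of $\bm U_1, \dots, \bm U_n$ and $F_{nj}^{-1}$ its left-continuous inverse, and define $\bm v \in [0,1]^d$ by $v_j := F_{nj}^{-1}(u_j)$ for $j \in I$ and $v_j := u_j = 1$ otherwise. Since $\hat U_{ij}=F_{nj}(U_{ij})$, one has $\hat C_n(\bm u) = G_n(\bm v) \pm O(k/n)$, where the error comes from the $1/n$-scaling of ranks and is absorbed into the $2k/\sqrt n$ of the target bound. Adding and subtracting the linearization yields
\begin{align*}
\Cb_n(\bm u) - \bar\Cb_n(\bm u)
&= \{\alpha_n(\bm v) - \alpha_n(\bm u)\}
+ \sqrt n\Bigl\{C(\bm v) - C(\bm u) - \sum_{j \in I} \dot C_j(\bm u)(v_j-u_j)\Bigr\}\\
&\quad + \sum_{j \in I}\dot C_j(\bm u)\bigl\{\sqrt n(v_j-u_j) + \alpha_{nj}(u_j)\bigr\} + O(k/\sqrt n),
\end{align*}
which isolates an empirical-process modulus-of-continuity error, a Taylor remainder, and a marginal quantile-inversion error.

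Each piece is controlled on a good event $\Omega_0$. By the two-sided DKW inequality calibrated so that $2e^{-2nr^2}=2\delta$, union-bounded over $d$ margins, one has $\|F_{nj}-\mathrm{id}\|_\infty \le r$ uniformly in $j$ at cost $2d\delta$, and hence $|v_j - u_j| \le r$; this also reduces the inversion error to the univariate oscillation $|\alpha_{nj}(v_j)-\alpha_{nj}(u_j)|$. Both the multivariate oscillation of $\alpha_n$ over the $I$-margin along segments of length at most $r$ and its univariate analogues are bounded by $L_1\sqrt{r\log(L_2/(\delta r))}$ via a non-asymptotic Talagrand-type modulus-of-continuity inequality for the empirical process indexed by lower orthants in $[0,1]^I$, a VC-class whose complexity depends only on $k$; applying it to the $\binom{d}{k}$ multivariate classes and the $d$ univariate ones adds $d^k\delta$ and $d\delta$ respectively. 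The Taylor remainder is handled using Condition~\ref{cond:c2-new}: since $|\ddot C_{ij}(\bm u)| \le K \min\{(u_i(1-u_i))^{-1},(u_j(1-u_j))^{-1}\}$, integrating the singularity along the segment from $\bm u$ to $\bm v$ with $|v_j-u_j| \le r$, as in the proof of Segers' Lemma~A.1, yields a contribution of the same order $\sqrt{r\log(L_2/(\delta r))}$.

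The main obstacle is twofold. First, establishing a Talagrand-type modulus-of-continuity inequality with explicit, $(k,K)$-dependent constants and the correct $\delta$-dependence, uniform over the choice of the $k$-margin $C_I$; since the class of lower orthants in $[0,1]^k$ has VC-dimension bounded by a function of $k$ only and unit envelope, a direct application of a Giné--Guillou or Massart-style maximal inequality suffices, but the logarithmic factor in $r$ has to be tracked carefully. Second, handling the boundary strip where some $u_j$ lies within $r$ of $\{0,1\}$: there the Taylor expansion degenerates, and one must instead exploit coordinate-wise monotonicity of both $C$ and $\hat C_n$ to bound $|\Cb_n - \bar\Cb_n|$ directly by marginal fluctuations already controlled by further DKW events. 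These auxiliary applications, together with the events described above, account for the full coefficient $9d+d^k$ in the claimed failure probability.
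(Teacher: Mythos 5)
Your overall strategy is the same as the paper's: reduce $\Cb_n$ to the version $\tilde\Cb_n$ based on quantile inverses (cost $O(k/\sqrt n)$), then decompose $\tilde\Cb_n - \bar\Cb_n$ into an oscillation term $\alpha_n(\bm v)-\alpha_n(\bm u)$, a first-order quantile-inversion term, and a remainder controlled by Condition~\ref{cond:c2-new}; bound each on a DKW event, and union-bound over the $\le d^k$ margins and $d$ coordinates. The paper carries this out via the three terms $S_{n1},S_{n2},S_{n3}$ in~\eqref{eq:maindecomp-new}, bounded in Lemmas~\ref{lem:sn1-new}, \ref{lem:sn2-new} and~\ref{lem:sn3-new}, and the totals $2d+d^k$, $3d$ and $4d$ add up to the stated $9d+d^k$.

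There is, however, a genuine gap in your treatment of the Taylor remainder. You place the second-derivative information on the term
$T_2 = \sqrt n\{C(\bm v)-C(\bm u)-\sum_{j\in I}\dot C_j(\bm u)(v_j-u_j)\}$,
whose weights are $\sqrt n (v_j-u_j)=\beta_{nj}(u_j)$. On the DKW event, $|\beta_{nj}|\le \sqrt n\,r$, but this is only a \emph{sup}-bound that does not decay near the boundary. Writing $T_2=\sum_j\{\dot C_j(\bm u^*)-\dot C_j(\bm u)\}\beta_{nj}(u_j)$ and applying Condition~\ref{cond:c2-new} via Segers' Lemma~4.3 gives $|\dot C_j(\bm u^*)-\dot C_j(\bm u)|\lesssim Kkr/(u_j(1-u_j))$; for $u_j$ of order $r$ (the worst of your ``center'' region) this combines with $|\beta_{nj}|\le\sqrt n\,r$ to give a term of order $Kk\,\sqrt n\,r\asymp Kk\sqrt{\log(1/\delta)}$, which exceeds the target $L_1\sqrt{r\log(L_2/(\delta r))}\asymp n^{-1/4}(\log)^{3/4}$ by a factor of order $(n/\log)^{1/4}$. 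So the naive ``integrate the singularity along the segment of length $\le r$'' argument does not close.

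The missing ingredient is a \emph{weighted} control of $\beta_{nj}(u)/\sqrt{u(1-u)}$ (or, equivalently, a Bahadur--Kiefer reduction $\beta_{nj}=-\alpha_{nj}+O(\omega_{\alpha_{nj}}(r)+n^{-1/2})$ followed by the Shorack--Wellner weighted bound on $\alpha_{nj}(u)/\sqrt{u(1-u)}$ that the paper uses in Lemma~\ref{lem:sn3-new}). The paper avoids this issue by grouping differently: its $S_{n2}=\sum_j\dot C_j(\bm u^*)\{\beta_{nj}+\alpha_{nj}\}$ absorbs the Bahadur--Kiefer sum with a derivative factor bounded by $1$, while its $S_{n3}=\sum_j\{\dot C_j(\bm u)-\dot C_j(\bm u^*)\}\alpha_{nj}$ pairs the second-derivative singularity with $\alpha_{nj}$, whose vanishing at the endpoints is what cancels the boundary blow-up (via a split into a boundary strip, where $|\alpha_{nj}(u_j)|\le\omega_{\alpha_{nj}}(2r)$, and an interior, where the weighted bound applies). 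Your $T_3$ plays the role of $S_{n2}$ correctly, but your $T_2$ should either be regrouped as above or be accompanied by an explicit weighted bound on $\beta_{nj}$. The vague appeal to ``coordinate-wise monotonicity of $C$ and $\hat C_n$'' for the boundary strip also needs to be replaced by a quantitative modulus-of-continuity argument; monotonicity alone does not deliver the required rate.

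Two minor points: the failure-probability bookkeeping is left implicit — to hit exactly $9d+d^k$ you need to track the DKW event ($2d\delta$, reused), the univariate oscillation events ($d\delta$ each), the multivariate oscillation event ($d^k\delta$), and the weighted-process event ($d\delta$) — and the modulus-of-continuity inequalities you need are exactly Proposition~A.1 of Segers (2012) for the $k$-variate margins and formula (19) of Section~14.2 in Shorack--Wellner for the univariate pieces; a generic Gin\'e--Guillou/Massart VC argument would also work but the constants must be tracked explicitly since the theorem asserts dependence only on $k$ and $K$.
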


\begin{proof}[Proof of Theorem~\ref{theo:main}]
Let $\delta_n=1/(dn)^k$, which implies that $(9d+d^k)\delta_n$ is summable. 
Observing that  $e^{-n/32} \le \delta_n \le e^{-1}$ for sufficiently large $n$, we may apply Theorem~\ref{theo:main-new} with $\delta=\delta_n$. The upper bound in that theorem then becomes
\begin{align*}
    \lambda_{n,k}(\delta_n) 
    &=
   \frac{2k}{\sqrt n}  + L_2  \Big[\frac{k\log(nd)}{n}\Big]^{1/4}
    \sqrt{\log\Big( \frac{L_1\sqrt 2 d^kn^{k+1/2}}{\sqrt{k\log(nd)}}\Big)} 
\\&\le
   \frac{2k}{\sqrt n}  + 
   L_2 \Big[\frac{k\log(nd)}{n}\Big]^{1/4}
    \sqrt{\log\Big( \frac{L_1 (dn)^{k+1/2}}{\sqrt{k\log(2)}}\Big)} 
\lesssim
       %\frac{1}{\sqrt n}  +
       \frac{\{ \log(nd) \}^{3/4}}{n^{1/4}},
\end{align*}
where we used $d \ge 2$ at the first inequality. We conclude by the Borel-Cantelli lemma.
\end{proof}

\begin{proof}[Proof of Theorem~\ref{theo:main-new}]
Recall the definition of $G_n(\bm u) = \frac1n \sum_{i=1}^n \bm 1(\bm U_i \le \bm u)$ with margins $G_{n1}, \dots, G_{nd}$. For $u_j\in[0,1]$, define
$
G_{nj}^-(u_j) = \inf\{ v \in [0,1]: G_{nj}(v) \ge u_j\}%, \quad u_j \in [0,1],
$
and let
\begin{align} \label{eq:cnt-new}
\tilde \Cb_n(\bm u) = \sqrt n \{ \tilde C_n(\bm u) - C(\bm u)\}, \quad \bm u \in [0,1]^d,
\end{align}
where
$\tilde C_n(\bm u) = G_n( G_{n1}^-(u_1), \dots, G_{nd}^-(u_d) )$.
By Lemma~\ref{lem:cnctn} is is sufficient to show that
\begin{align} \label{eq:stute-inverses}
\sup_{\bm u \in W_k} | \tilde \Cb_n(\bm u) - \bar \Cb_n(\bm u)|  \le 
\frac{k}{\sqrt n} + L_1 \sqrt{r \log\Big(\frac{L_2}{\delta r}\Big)} 
\end{align}
with probability at least $1-(9d+d^k)\delta$.
For that purpose, define
$\bm v_n(\bm u)=(v_{n,1}(u_1), \dots, v_{n,d}(u_d))$ where, for $j=1,\ldots,d$ and $u_j \in [0,1]$,
\[
v_{nj}(u_j) := \begin{cases} G_{nj}^-(u_j) & ,\quad u_j \ne 1 \\
1 &, \quad u_j=1.
\end{cases}
\]
Recall that $I_{\bm u}$ denote the set of indexes $j$ for which $u_j <1 $.
Note that $I_{\bm u} = I_{\bm v_n(\bm u)}$, i.e., $\bm u$ and $\bm v_n(\bm u)$ have entry 1 at the same coordinates.
In view of the fact that $G_{nj}^-(1)=\max_{i=1, \dots, n} U_{ij}$, we have
$
\tilde C_n(\bm u) = G_n(G_{n1}^-(u_1),  \dots, G_{nd}^-(u_d)) 
=
G_n(\bm v_n(\bm u)).
$
Therefore, we can write
\begin{align*}
\tilde \Cb_{n}(\bm u) 
&= \sqrt{n} \{ G_n\left( v_n(\bm  u)\right)  -C\left( v_n(\bm u ) \right) \}  +\sqrt{n}\{ C\left( v_n(\bm 
 u)\right) -C(\bm  u) \} 
\\
&=
\alpha_n(\bm v_n(\bm  u))  + \sqrt{n}\{  C\left(v_n(\bm u) \right)  -C(\bm u)\}.
\end{align*}
By Condition~\ref{cond:c1} and the mean value theorem, applied to the function 
$t \mapsto f(t) := C(\bm u + t(\bm v_n(\bm u) - \bm u))$, 
there exists a (random) $t^*=t_n^*(\bm u) \in (0,1)$ such that
\begin{align*}
\sqrt{n}\{  C\left(v_n(\bm u) \right)  -C(\bm u)\}
&=
\sum_{j \in I_{\bm u}} \dot C_j(\bm u + t^*(\bm v_n(\bm u)- \bm u)) \beta_{nj}(u_j),
\end{align*}
where $\beta_{nj}(u_j) = \sqrt n (G_{nj}^-(u_j) - u_j)$. 
As a consequence of the previous two displays and the fact that $\bar \Cb_n(\bm u) = \alpha_n(\bm u) - \sum_{j \in I_{\bm u}} \dot C_j(\bm u) \alpha_{nj}(u_j)$, for any $\bm u \in W_k$,
\begin{align} \label{eq:maindecomp-new}
\tilde \Cb_{n}(\bm u) - \bar \Cb_n(\bm u) 
&= 
S_{n1}(\bm u) + S_{n2}(\bm u) + S_{n3}(\bm u), \qquad \text{ where } \\
S_{n1}(\bm u) 
&=\nonumber
\alpha_n(\bm v_n(\bm  u)) - \alpha_n(\bm u), \\
S_{n2}(\bm u) 
&= \nonumber
\sum_{j\in I_{\bm u}} \dot C_j(\bm u + t^*(\bm v_n(\bm u)- \bm u)) \{ \beta_{nj}(u_j) + \alpha_{nj}(u_j) \}, \\
S_{n3}(\bm u) 
&= \nonumber
\sum_{j\in I_{\bm u}}  \big\{ \dot C_j(\bm u) - \dot C_j(\bm u + t^*(\bm v_n(\bm u)- \bm u))  \big\} \alpha_{nj}(u_j).
\end{align}
Lemmas \ref{lem:sn2-new}, \ref{lem:sn1-new} and \ref{lem:sn3-new} imply that
\begin{align*}
&\hspace{-1cm}\sup_{\bm u \in W_k} \big| S_{n1}(\bm u) + S_{n2}(\bm u) + S_{n3}(\bm u) \big|
\\&\le 
\lambda_{n,k}^{(1)}(\delta) + \lambda_{n,k}^{(2)} (\delta) + \lambda_{n,k}^{(3)} (\delta) 
\\ &=
\frac{k}{\sqrt n} + K_{1,1}  \sqrt{r \log\Big(\frac{K_{1,2}}{\delta r}\Big)} + K_2 k \sqrt{r \log\Big(\frac{160}{\delta r}\Big)}  + K_3 \sqrt{r \log\Big(\frac{80}{\delta r}\Big)} 
\end{align*}
with probability at least $1-(9d+d^k)\delta$. The assertion hence follows from \eqref{eq:stute-inverses} by setting $L_2=\max(K_{1,2}, 160)$ and $L_1 = K_{1,1}+K_2k + K_3$.
\end{proof}

\subsection{Reduction to generalized inverses}

Recall the definition of $\tilde \Cb_n$ from \eqref{eq:cnt-new}. The following result is well-known in the low-dimensional regime, and essentially the same proof applies in the high-dimensional regime. For completeness, it is provided in Appendix~\ref{sec:reduction-inverses}.

\begin{lemma} \label{lem:cnctn} 
Fix $k \in \N_{\ge 2}$.
Assume that, for each $j \in \{1, \dots, d\}$, the $j$th first-order partial derivative $\dot C_j$ exists and is continuous on $V_{j} \cap W_k$.
Then, with probability one,
\begin{align} \label{eq:cnctn}
\sup_{\bm u \in W_k} | \Cb_n(\bm u) - \tilde \Cb_n(\bm u)| \le \frac{k}{\sqrt n}.
\end{align}
\end{lemma}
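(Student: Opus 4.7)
The plan is to exploit the identity $\hat U_{ij} = R_{ij}/n = G_{nj}(U_{ij})$, which allows us to rewrite both estimators as sums of similarly structured indicators:
\[
\hat C_n(\bm u) = \frac1n \sum_{i=1}^n \prod_{j=1}^d \bm 1\bigl(G_{nj}(U_{ij}) \le u_j\bigr), \qquad \tilde C_n(\bm u) = \frac1n \sum_{i=1}^n \prod_{j=1}^d \bm 1\bigl(U_{ij} \le G_{nj}^-(u_j)\bigr).
\]
The standard Galois relation $G_{nj}^-(u_j) \le x \iff u_j \le G_{nj}(x)$ for right-continuous cdfs yields the equivalence $\{U_{ij} < G_{nj}^-(u_j)\} = \{G_{nj}(U_{ij}) < u_j\}$. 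Consequently $\bm 1(U_{ij} \le G_{nj}^-(u_j)) \ge \bm 1(G_{nj}(U_{ij}) \le u_j)$, with strict inequality only on the event $\{U_{ij} = G_{nj}^-(u_j),\ G_{nj}(U_{ij}) > u_j\}$. In particular $\tilde C_n(\bm u) \ge \hat C_n(\bm u)$ pointwise.

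For the matching upper bound I would apply the elementary telescope $\prod_j a_j - \prod_j b_j \le \sum_j (a_j - b_j)$ valid for $0 \le b_j \le a_j \le 1$. Since the indicators at coordinates $j \notin I_{\bm u}$ trivially equal $1$ (as $u_j = 1$), only coordinates in $I_{\bm u}$ contribute, giving
\[
0 \le \tilde C_n(\bm u) - \hat C_n(\bm u) \le \frac1n \sum_{j \in I_{\bm u}} \sum_{i=1}^n \bm 1\bigl(U_{ij} = G_{nj}^-(u_j)\bigr).
\]
Since the copula sample has continuous margins, for each fixed $j$ the values $U_{1j},\ldots,U_{nj}$ are almost surely pairwise distinct; the inner sum therefore contributes at most~$1$. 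Hence the right-hand side is bounded by $|I_{\bm u}|/n \le k/n$ uniformly in $\bm u \in W_k$, and multiplying by $\sqrt n$ yields the claimed inequality.

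There is no genuine obstacle here: the whole argument is a bookkeeping exercise around the at most one possible tie per margin between a sample point and the generalized inverse. I note in passing that the smoothness assumption on $\dot C_j$ stated in the lemma is not actually needed for the inequality~\eqref{eq:cnctn} itself; it is presumably stated because $\bar \Cb_n$, which appears in the subsequent decomposition~\eqref{eq:maindecomp-new}, does require it.
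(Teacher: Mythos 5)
Your proposal is correct and takes essentially the same approach as the paper's proof: both expand the product of indicators via a telescoping argument and bound the difference of indicators by the almost-sure-at-most-one tie with a generalized inverse per margin. Your formulation is slightly sharper in that you observe the one-sided ordering $\tilde C_n \ge \hat C_n$ (so absolute values are unnecessary), and your closing remark is accurate: the continuity assumption on $\dot C_j$ plays no role here and is carried along only because $\bar\Cb_n$ needs it in the decomposition that follows.
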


\subsection{Bounding the error terms}

In this section we bound
$
\sup_{\bm u \in W_k} |S_{n\ell}(\bm u)|$
from \eqref{eq:maindecomp-new} for each $\ell=1,2,3$. 
The simplest term is $S_{n2}$, which we treat first.

\begin{lemma}[Bound on $S_{n2}$] \label{lem:sn2-new} 
Fix $k \in \N_{\ge 2}$ and $d\in\N_{\ge 2}$.
Assume that, for each $j \in \{1, \dots, d\}$, the $j$th first-order partial derivative $\dot C_j$ exists and is continuous on $V_{j} \cap W_k$. Then there exists a universal constant $K_2$ not depending on $k$ and $d$ such that, for any $\delta >0$ and $n \in \N$ with $e^{-n/2} \le \delta \le e^{-1}$,
\begin{align}\label{eq:hpsn1}
\sup_{\bm u \in W_k} |S_{n2}(\bm u)| 
\le 
\frac{k}{\sqrt n} + K_2 k  \sqrt{r \log\Big(\frac{160}{\delta r}\Big)} 
=: 
\lambda_{n,k}^{(2)}(\delta) 
\end{align}
with probability at least $1-3d\delta$ and with $r=r_{n,\delta}$ from \eqref{eq:r-def}.   
\end{lemma}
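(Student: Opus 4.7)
The plan is to exploit the Lipschitz property of copulas together with a non-asymptotic oscillation bound for the univariate empirical process. First, since $C$ is coordinatewise $1$-Lipschitz we have $|\dot C_j(\bm u)| \le 1$ wherever $\dot C_j$ is defined. Because $|I_{\bm u}| \le k$ for $\bm u \in W_k$, this gives at once the reduction
\[
\sup_{\bm u \in W_k} |S_{n2}(\bm u)| \le k \cdot \max_{1 \le j \le d} \sup_{u \in [0,1]} |\alpha_{nj}(u) + \beta_{nj}(u)|,
\]
so the problem becomes a univariate one, uniformly in $j$.

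Next, I would use the Bahadur-type identity
\[
\alpha_{nj}(u) + \beta_{nj}(u) = \bigl[\alpha_{nj}(u) - \alpha_{nj}(G_{nj}^-(u))\bigr] + \sqrt{n}\bigl[G_{nj}(G_{nj}^-(u)) - u\bigr].
\]
The second bracket is deterministically bounded by $1/\sqrt n$ since $0 \le G_{nj}(G_{nj}^-(u)) - u \le 1/n$ by the standard right-continuity property of empirical cdfs. To bound the first bracket one needs two ingredients: a small-scale control on $|u - G_{nj}^-(u)|$ and a modulus of continuity for $\alpha_{nj}$ at that scale.

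For the first ingredient, the DKW inequality gives $\Prob(\sup_u |G_{nj}(u)-u| > r) \le 2 e^{-2nr^2} = 2\delta$, and by the usual quantile/cdf swap one deduces $\sup_u |G_{nj}^-(u) - u| \le r + 1/n \le 2r$ on that event (using $r \ge 1/\sqrt n \ge 1/n$). A union bound over $j \in \{1,\dots,d\}$ costs a factor $d$ and yields the DKW event with probability $\ge 1 - 2d\delta$. For the second ingredient, I would invoke a non-asymptotic oscillation bound of the form
\[
\Prob\Bigl(\sup_{|u-v|\le 2r} |\alpha_{nj}(u) - \alpha_{nj}(v)| > K_2\sqrt{r\log(160/(\delta r))}\Bigr) \le \delta,
\]
derivable from Talagrand's inequality (or an Einmahl–Mason / Bernstein bound) applied to the class of indicators $\{\mathbf 1_{(u,v]}:|u-v|\le 2r\}$, whose envelope variance is $\lesssim r$. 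A union bound over $j$ costs another factor $d$, and on the intersection of the two events (probability $\ge 1-3d\delta$) I obtain
\[
\max_{1\le j\le d}\sup_{u\in[0,1]} |\alpha_{nj}(u)+\beta_{nj}(u)| \le \tfrac{1}{\sqrt n} + K_2\sqrt{r\log(160/(\delta r))},
\]
which multiplied by $k$ gives the stated bound.

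The main obstacle is the precise form of the modulus-of-continuity bound in step two, namely matching the logarithmic factor $\log(160/(\delta r))$ and the universal constant $K_2$. This requires a careful application of a Talagrand- or Bernstein-type concentration inequality with explicit numerical constants, exploiting that indicators of intervals of length at most $2r$ have variance bounded by $2r$. Everything else — the copula Lipschitz property, the DKW step, the $1/\sqrt n$ deterministic term, and the union bounds — is routine.
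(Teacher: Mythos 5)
Your decomposition is essentially the same as the paper's: the Lipschitz reduction via $|\dot C_j|\le 1$ and $|I_{\bm u}|\le k$, the identity $\alpha_{nj}+\beta_{nj} = (\alpha_{nj}-\alpha_{nj}\circ G_{nj}^-) + \sqrt n\,(G_{nj}\circ G_{nj}^- - \mathrm{id})$ with deterministic remainder at most $1/\sqrt n$, DKW to localize the increment scale (the paper uses the exact identity $\|G_{nj}^- - \mathrm{id}\|_\infty = \|G_{nj} - \mathrm{id}\|_\infty$, so no $+1/n$ slack is needed and you get scale $r$ rather than $2r$), and finally an oscillation bound for $\alpha_{nj}$ at that scale, with union bounds over $j$. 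The one piece you explicitly leave open, the modulus-of-continuity inequality, is precisely the crux, and the paper does not derive it from Talagrand but invokes the Mason--Shorack--Wellner inequality (formula (19), Section 14.2 of Shorack and Wellner, 2009):
\[
\Prob\big(\omega_{\alpha_{n1}}(r) > \lambda\big) \;\le\; \frac{160}{r}\exp\Big(-\frac{\lambda^2}{32r}\,\psi\Big(\frac{\lambda}{n^{1/2}r}\Big)\Big),
\]
where $\psi$ is the decreasing Bernstein correction function. The prefactor $160/r$ is exactly where the $\log(160/(\delta r))$ in the statement comes from. What is genuinely nontrivial, and what your sketch glosses over when it says ``variance $\lesssim r$, apply Bernstein,'' is controlling the factor $\psi(\lambda/(\sqrt n\, r))$: one must verify that the eventual choice $\lambda = \sqrt{32r\log(160/(\delta r))/\psi(L)}$ satisfies $\lambda \le L\sqrt n\, r$ for a fixed universal $L$, which the paper does using the hypotheses $\log(1/\delta)\le n/2$ and $\log(1/\delta)\ge 1$. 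Only then does $\psi(\lambda/(\sqrt n\, r))\ge\psi(L)>0$ hold, so that $K_2 = \sqrt{32/\psi(L)}$ is a genuine universal constant. Without this self-consistency check, a naive Bernstein bound would not yield a constant $K_2$ independent of $n$, $d$ and $\delta$ across the whole stated range.
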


\begin{proof}
Since the partial derivatives of $C$ are bounded by 1, we have  
\[
|S_{n2}(\bm u)| \le 
T_{n2}(\bm u) :=
\sum_{j \in I_{\bm u}}
\big| \beta_{nj}(u_j) + \alpha_{nj}(u_j) \big| .
\]
Denote  the identity on $[0,1]$ by $\mathrm{id}$. As laid out in Section 15 (page 585) in \cite{ShoWel09}, we have
$
\alpha_{nj} + \beta_{nj}  = 
\alpha_{nj} - \alpha_{nj} \circ G_{nj}^{-} + \sqrt {n} (G_{nj} \circ G_{nj}^{-} - \mathrm{id})
$ 
and 
$
\| \sqrt {n} (G_{nj} \circ G_{nj}^{-} - \mathrm{id}) \|_\infty \le n^{-1/2},
$ 
where $\|\cdot\|_\infty$ denotes the supremum norm of a real-valued function. Hence,
\[
T_{n2}(\bm u)
\le 
\frac{|I_{\bm u}|}{\sqrt n} + \sum_{j \in I_{\bm u}} \omega_{\alpha_{nj}}(\|G_{nj}^- - \mathrm{id}\|_\infty),
\]
where $\omega_{f}(\eps)=\sup_{|u-v| \le \eps}|f(u)-f(v) |$, $\eps\ge 0$, denotes the modulus of continuity of $f:[0,1] \to \R$.
Using that $\|G_{nj}^- - \mathrm{id}\|_\infty = \|G_{nj} - \mathrm{id}\|_\infty$,
we obtain that
\[
\sup_{\bm u \in W_k} |S_{n2}(\bm u) |
\le
\frac{k}{\sqrt n} + k \max_{j=1}^d \omega_{\alpha_{nj}}(\|G_{nj} - \mathrm{id}\|_\infty).
\]
The classical DKW-inequality yields
\begin{align}
\label{eq:r-bound}
\max_{j=1}^d \|G_{nj} - \mathrm{id}\|_\infty \le r  %= \sqrt{\frac1{2n} \log\Big(\frac1\delta\Big)}
\end{align}
with probability at least $1-2d\delta$, with $r$ as in \eqref{eq:r-def}. On that event we hence obtain 
\begin{align}
\label{eq:proof-sn2-1}
\sup_{\bm u \in W_k} |S_{n2}(\bm u) |
\le
\frac{k}{\sqrt n} + k \max_{j=1}^d \omega_{\alpha_{nj}}(r).
\end{align}
By an inequality by Mason, Shorack and Wellner, formula (19) in Section 14.2 in \cite{ShoWel09} with their $\delta=1/2$,  we have, observing that $r \le 1/2$ by our assumption $\delta \ge e^{-n/2}$,
\begin{align}  
\Prob\Big( \max_{j=1}^d \omega_{\alpha_{nj}}(r) > \lambda\Big)
&\le 
d \times \Prob\Big( \omega_{\alpha_{n1}}(r) > \lambda\Big) 
\le \label{eq:bom1-new}
160 \frac{d}{r} \exp\Big( - \frac{\lambda^2}{32r} \psi\Big(\frac{\lambda}{n^{1/2}r}\Big)\Big),
\end{align}
where $\psi$ is the decreasing, continuous function on $[-1, \infty)$ defined by $\psi(-1)=2, \psi(0)=1$ and 
\begin{align} \label{eq:psi-new}
\psi(x) = 2x^{-2} \{ (1+x)\log(1+x) - x\}, \quad x \in (-1,0) \cup (0,\infty).
\end{align}
We will later choose $\lambda$ in such a way that $\lambda \le L \sqrt n r$, for some $L$ specified at the end of this proof. The fact that $\psi$ is decreasing then implies that $\psi(\lambda/(\sqrt n r)) \ge \psi(L)$. %= 0.5058 \dots \ge 1/2$. 
Hence, from \eqref{eq:bom1-new},
\begin{align*}
\Prob\Big( \max_{j=1}^d \omega_{\alpha_{nj}}(r) > \lambda\Big)
\le
160 \frac{d}{r} \exp\Big( - \frac{\lambda^2 \psi(L)}{32r} \Big).
\end{align*}
Choosing $\lambda=\sqrt{32r \log(160/(\delta r)) / \psi(L)}$, the upper bound in the previous display can be written as
$
160 dr^{-1} \exp( - \log (160\delta^{-1} r^{-1}))
 = d \delta.$
Together with \eqref{eq:proof-sn2-1}, this implies
\[
\sup_{\bm u \in W_k} |S_{n2}(\bm u) |
\le
\frac{k}{\sqrt n} + k \lambda
=
\frac{k}{\sqrt n} + \sqrt{\frac{32}{\psi(L)}} k r^{1/4} \sqrt{\log\Big( \frac{160 }{\delta r}\Big)} 
\]
with probability at least $1-3d \delta$. 
This yields the assertion, with $K_2=\sqrt{32/\{\psi(L)\}}$.

It remains to show the required inequality $\lambda \le L\sqrt n r$, which is equivalent to
\begin{align} \label{eq:sn2-inequality}
\log\Big( \frac{160 \sqrt{2n}}{\delta \sqrt{\log(1/\delta)}}\Big) \le \frac{L^2 \psi(L) \sqrt2 }{32} \sqrt{n \log(1/\delta)}.
\end{align}
The left-hand side can be bounded as follows:
\begin{align*}
\log\Big( \frac{160 \sqrt{2n}}{\delta \sqrt{\log(1/\delta)}}\Big) 
&=
\log(160 \sqrt 2) + \log(\sqrt n) + \log(1/\delta) - \frac12 \log(\log(1/\delta))
\\&\le
\log(160 \sqrt 2) + \log(\sqrt n) + \sqrt{ n/2 \cdot \log(1/\delta)} - 0
\end{align*}
where we used that $1 \le \log(1/\delta)\le n/2$ by assumption. Since $\log(160 \sqrt 2) + \log(\sqrt n) \le \log(160 \sqrt 2) \cdot  \sqrt {n}$ for all $n\in\N$, and again using $\log(1/\delta) \ge 1$, we obtain that
\begin{align}
\label{eq:log-bound}
\log\Big( \frac{160 \sqrt{2n}}{\delta \sqrt{\log(1/\delta)}}\Big) 
\le \{ \log(160 \sqrt 2) +2^{-1/2} \} \sqrt{n \log(1/\delta)}.
\end{align}
Hence, \eqref{eq:sn2-inequality} is met if we choose $L$ such that $L^2 \psi(L) \sqrt {2}/32 \ge \log(160 \sqrt 2) +2^{-1/2}$.
\end{proof}

\begin{lemma}[Bound on $S_{n1}$] \label{lem:sn1-new}
For any fixed $k \in \N_{\ge 2}$ and $d\in\N_{\ge 2}$ with $k\le d$ there exist constants $K_{1,1}, K_{1,2}>0$ only depending on $k$ such that, for any $\delta >0$ and $n \in \N$ with $e^{-n/2} \le \delta \le e^{-1}$,
\begin{align*}
\sup_{\bm u \in W_k} |S_{n1}(\bm u)|  \le 
K_{1,1}  \sqrt{r \log\Big(\frac{K_{1,2}}{\delta r}\Big)} =: \lambda_{n,k}^{(1)}(\delta)
\end{align*}
with probability at least $1-(2d+d^{k})\delta$, where $r=r_{n,\delta}$ is as in \eqref{eq:r-def}. 
%(note that $(2n)^{-1/2} \le r\le 1/2$ by the assumption $e^{-n/2} \le \delta \le e^{-1}$).
\end{lemma}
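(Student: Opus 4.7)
The plan is to bound $S_{n1}(\bm u) = \alpha_n(\bm v_n(\bm u)) - \alpha_n(\bm u)$ via a modulus-of-continuity argument, reducing to a uniform control over the $k$-dimensional marginals of the empirical process $\alpha_n$.

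First, I would reduce the supremum over $\bm u\in W_k$ to a maximum over index subsets. For $\bm u\in W_k$ with active set $I=I_{\bm u}$ of cardinality $k'\le k$, the entries $j\notin I$ satisfy $u_j=v_{nj}(u_j)=1$, so both $\bm u$ and $\bm v_n(\bm u)$ agree on these coordinates. Consequently, if we define $\alpha_{n,I}(\bm w) = n^{-1/2}\sum_{i=1}^n\{\ind(\bm U_{i,I}\le \bm w) - C_I(\bm w)\}$ for $\bm w\in[0,1]^I$, we have $S_{n1}(\bm u)=\alpha_{n,I}(\bm v_{n,I}(\bm u))-\alpha_{n,I}(\bm u_I)$. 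On the DKW event $E:=\{\max_{j=1}^d\|G_{nj}-\mathrm{id}\|_\infty\le r\}$, which has probability $\ge 1-2d\delta$ by \eqref{eq:r-bound}, we have $\|\bm v_{n,I}(\bm u)-\bm u_I\|_\infty\le r$, so
\[
\sup_{\bm u\in W_k}|S_{n1}(\bm u)|\;\le\;\max_{I\in\bigcup_{k'\le k}\mathcal I_{k'}(d)}\omega_{n,I}(r),\qquad \omega_{n,I}(r):=\sup_{\bm w,\bm w'\in[0,1]^I,\ \|\bm w-\bm w'\|_\infty\le r}|\alpha_{n,I}(\bm w)-\alpha_{n,I}(\bm w')|.
\]

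The core step is an exponential tail bound for the single modulus $\omega_{n,I}(r)$ for a fixed $I$ with $|I|=k'\le k$. Consider the class $\cF_{r,I}=\{\ind(\cdot\le\bm w)-\ind(\cdot\le\bm w'):\bm w,\bm w'\in[0,1]^I,\|\bm w-\bm w'\|_\infty\le r\}$; it has envelope $1$, and by telescoping a difference of product indicators into at most $k'$ indicators of ``thin slabs'' $\{u_j<U_j\le u_j'\}$, each having uniform marginal probability $\le r$, the $L_2$-radius of $\cF_{r,I}$ satisfies $\sup_{f\in\cF_{r,I}}\Exp[f(\bm U_1)^2]\le k r$. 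The bracketing entropy of the class of orthants in $[0,1]^I$ is bounded by $c k'\log(1/\eps)$, so a standard chaining/entropy-integral estimate gives $\Exp[\omega_{n,I}(r)]\lesssim\sqrt{kr\log(1/r)}$. Bousquet's version of Talagrand's concentration inequality then yields, for any $\delta\in(0,e^{-1}]$ and with probability at least $1-\delta$,
\[
\omega_{n,I}(r)\;\lesssim\;\sqrt{kr\log(1/r)}+\sqrt{kr\log(1/\delta)}+\frac{\log(1/\delta)}{\sqrt n}.
\]
In the regime $e^{-n/2}\le\delta\le e^{-1}$, we have $1/\sqrt n\lesssim\sqrt{r}$ and $\log(1/\delta)\le 2nr^2$, so the last term is absorbed into the second. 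Hence the bound collapses to $K'_{1,1}\sqrt{r\log(K'_{1,2}/(\delta r))}$ for suitable constants depending only on $k$.

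Finally, union-bounding this tail bound over the $\sum_{k'\le k}\binom{d}{k'}\le d^k$ subsets $I$ (replacing $\delta$ by $\delta$ and absorbing the combinatorial factor into the failure probability) and intersecting with $E$, the total failure probability becomes at most $(2d+d^k)\delta$, and the form of the bound matches $K_{1,1}\sqrt{r\log(K_{1,2}/(\delta r))}$ with constants only depending on $k$.

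The hard part will be executing the empirical-process modulus bound cleanly with explicit, non-asymptotic constants. The Talagrand/Bousquet inequality introduces multiple error terms operating at different scales (variance, envelope, entropy integral), and one must verify that the assumed range $e^{-n/2}\le\delta\le e^{-1}$ places $r$ in the moderate-deviation regime where the $\sqrt{r\log(1/\delta)}$ contribution dominates the Bernstein-type $\log(1/\delta)/\sqrt n$ contribution, so that the final bound can indeed be written in the compact form $K_{1,1}\sqrt{r\log(K_{1,2}/(\delta r))}$. A secondary subtlety is the entropy/chaining estimate for $\Exp[\omega_{n,I}(r)]$, which must produce $\sqrt{kr\log(1/r)}$ rather than a worse polynomial in $1/r$; this is where using the bracketing entropy of the class of orthants (as opposed to crude VC dimension bounds) matters.
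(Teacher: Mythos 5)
Your proof follows the paper's high-level skeleton exactly—reduce $S_{n1}$ to the modulus of continuity of the $I$-marginal empirical processes over $|I|\le k$, invoke DKW to put the shift inside a ball of radius $r$ with probability $\ge 1-2d\delta$, bound the oscillation for each fixed $I$, and union bound over the $\le d^k$ index sets—but substitutes a different engine for the key oscillation tail bound. The paper plugs directly into Proposition~A.1 of \cite{Seg12}, a multivariate Mason--Shorack--Wellner oscillation inequality that delivers a one-line exponential tail
$\Prob(\omega_{\alpha_{n,I}}(r)>\lambda)\le (M_1/r)\exp(-M_2\lambda^2\psi(\lambda/(\sqrt n r))/r)$
with $\psi$ the Bennett correction, and then calibrates $\lambda$; what remains is verifying $\lambda\le L\sqrt n r$, which is where the assumption $e^{-n/2}\le\delta\le e^{-1}$ enters. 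You instead propose bounding $\Exp[\omega_{n,I}(r)]$ by bracketing entropy of orthant differences ($\sigma^2\le kr$, $\log N_{[]}\lesssim k\log(1/\eps)$, yielding $\sqrt{kr\log(1/r)}$) and then concentrating around that mean via Bousquet's inequality. Both routes are sound; your arithmetic for absorbing the Bernstein term is correct (since $\log(1/\delta)/\sqrt n = \sqrt2\,r\sqrt{\log(1/\delta)}$ is dominated by $\sqrt{kr\log(1/\delta)}$ once $r\le1/2$, $k\ge2$), and the final form $K_{1,1}\sqrt{r\log(K_{1,2}/(\delta r))}$ emerges either way. What the paper's choice buys is bookkeeping economy: the Segers inequality already packages the chaining and the concentration together, so the only remaining verification is a scalar inequality on $\lambda$, whereas the entropy-plus-Talagrand route spreads the argument over three estimates (variance proxy, entropy integral, concentration) that each need explicit constants to be tracked. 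What your route buys is transparency about where the $kr$ variance and the $\log(1/r)$ entropy factor come from, and it would generalize more easily to function classes other than orthants.
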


\begin{proof}
Recall that, for $I\subset \{1, \dots, d\}$, $\alpha_{n,I}$ defined by 
$
\alpha_{n,I}(\bm w) := \alpha_n(\bm w^I)
$
denotes the $I$-margin of $\alpha_n$. Also, let  
$\bm v_{n,I}(\bm w) = (\bm v_n(\bm w^I))_I = (v_{nj}(w_j))_{j \in I} \in [0,1]^I$.
With these notations, we have, for any $\bm u \in [0,1]^d$, 
$
\alpha_{n}(\bm u) = \alpha_{n,I_{\bm u}}(\bm u_{I_{\bm u}}),
$
and in view of the fact that $I_{\bm u} = I_{\bm v_n(\bm u)}$, we also have
$
\alpha_{n}(\bm v_n(\bm u)) = \alpha_{n, I_{\bm u}}(\bm v_{n,I_{\bm u}}(\bm u_{I_{\bm u}})).
$
As a consequence,
\begin{align*}
\sup_{\bm u \in W_k} |S_{n1}(\bm u)| 
&= 
\sup_{\bm u \in W_k} |\alpha_{n,I_{\bm u}}(\bm u_{I_{\bm u}}) - \alpha_{n, I_{\bm u}}(\bm v_{n,I_{\bm u}}(\bm u_{I_{\bm u}})) |
\\ &=
\max_{I \subset \{1, \dots, d\}: |I|\le k} \sup_{\bm v \in [0,1]^I} | \alpha_{n,I}( \bm v) -  \alpha_{n,I}(\bm v_{n,I}(\bm v))|
\\ &\le
\max_{I \subset \{1, \dots, d\}: |I|\le k} \omega_{\alpha_{n,I}} \big( \max_{j=1}^d \|G_{nj} - \mathrm{id} \|_\infty \big),
\end{align*}
where, for a real-valued function $g$ on $[0,1]^{I}$ and $\eps\ge 0$, 
\[
\omega_g(\eps) = \sup\{ |g(\bm u) - g(\bm v)|: \bm u, \bm v \in [0,1]^I, |u_j - v_j| \le \eps \text{ for all } j \in I \}
\]
denotes the modulus of continuity of $g$. 
As in the proof of Lemma~\ref{lem:sn1-new}, see \eqref{eq:r-bound}, the classical DKW-inequality yields
$
\max_{j=1}^d \|G_{nj} - \mathrm{id}\|_\infty \le r = \sqrt{\log(1/\delta)/(2n)}
$
with probability at least $1-2d\delta$. On that event, we hence obtain
\begin{align*}
\sup_{\bm u \in W_k} |S_{n1}(\bm u)| 
&\le 
\max_{I \subset \{1, \dots, d\}: |I|\le k} \omega_{\alpha_{n,I}} (r).
\end{align*}

Next, by Proposition A.1 in \cite{Seg12}, there exist constants $M_1$ and $M_2$ only depending on $k$ such that
\begin{align}
\Prob \Big( \max_{I \subset \{1, \dots, d\}: |I|\le k} \omega_{\alpha_{n,I}} (r) > \lambda\Big)
&\le \nonumber
d^k\max_{I \subset \{1, \dots, d\}: |I|\le k} \Prob \Big(\omega_{\alpha_{n,I}} (r) > \lambda\Big)
\\&\le \label{eq:bom2-new}
\frac{M_1 d^k}{r} \exp\Big( - \frac{M_2 \lambda^2}{r} \psi\Big( \frac{\lambda}{n^{1/2}r}\Big)\Big),
\end{align}
where $\psi$ is defined in \eqref{eq:psi-new}. Note the similarity with \eqref{eq:bom1-new}; the remaining proof will hence be the same as the one of Lemma~\ref{lem:sn1-new}.

We will later choose $\lambda$ in such a way that $\lambda \le L \sqrt n r$, for some $L$ specified at the end of this proof. The fact that $\psi$ is decreasing then implies that $\psi(\lambda/(\sqrt n r)) \ge \psi(L)$. Hence, from \eqref{eq:bom2-new},
\begin{align*}
\Prob \Big( \max_{I \subset \{1, \dots, d\}: |I|\le k} \omega_{\alpha_{n,I}} (r) > \lambda\Big)
\le
M_1 \frac{d^k}{r} \exp\Big( - \frac{M_2 \lambda^2  \psi(L)}{r} \Big).
\end{align*}
Choosing $\lambda=\sqrt{r \log(M_1/(\delta r)) / (M_2 \psi(L))}$, the upper bound in the previous display can be written as $
M_1 d^kr^{-1} \exp( - \log ( M_1 \delta^{-1} r^{-1}))
= d^k \delta.
$
Overall, we have shown that 
\[
\sup_{\bm u \in W_k} |S_{n1}(\bm u)|  \le \lambda
=
\frac{1}{\sqrt{M_2 \psi(L)}} \sqrt{r \log\Big( \frac{M_1 }{\delta r}\Big)} 
\]
with probability at least $1-\delta (2d+d^{k})$. The assertion hence follows by setting $K_{1,1}=1/\sqrt{M_2 \psi(L) 2^{1/2}}$ and $K_{1,2}=M_1$.

It remains to show that $\lambda \le L \sqrt n r$, which can be guaranteed by choosing $L$ sufficiently large. Indeed, the same calculation that was used for proving \eqref{eq:sn2-inequality} shows that we must choose $L$ such that $L^2 \psi(L) M_2 \sqrt 2 \ge \log(M_1 \sqrt 2) + 2^{-1/2}$.
\end{proof}

\begin{lemma}[Bound on $S_{n3}$] \label{lem:sn3-new}
Fix $k \in \N_{\ge 2}$ and $d\in\N_{\ge 2}$ with $k\le d$ and suppose that Condition~\ref{cond:c2-new} is met. Then there exists a constant $K_{3}>0$ only depending on $k$ and $K$ such that, for any $\delta>0$ and $n \in \N$ with $e^{-n/32} \le \delta \le e^{-1}$, we have
\[
\sup_{\bm u \in W_k} |S_{n3}(\bm u)| \le  
K_3 \sqrt{r \log\Big(\frac{80}{\delta r}\Big)} =: \lambda_{n,k}^{(3)}(\delta)
\]
with probability at least $1-4d\delta$ and with $r=r_{n,\delta}$ from \eqref{eq:r-def}. 
\end{lemma}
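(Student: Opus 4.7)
The plan is to work on the DKW event $A_1 = \bigcap_{j=1}^d \{\|G_{nj}-\mathrm{id}\|_\infty \le r\}$, which has probability $\ge 1-2d\delta$ by the DKW inequality and on which $|v_{nj}(u_j)-u_j|\le r$ for every $j$ and every $u_j$. The starting point is to apply the mean value theorem to $\dot C_j$ along the segment from $\bm u$ to $\bm u+t^*(\bm v_n(\bm u)-\bm u)$ and to use the bound $|\ddot C_{ji}(\xi)|\le K/(\xi_j(1-\xi_j))$ from Condition~\ref{cond:c2-new}, yielding, for each $j\in I_{\bm u}$,
\[
\big|\dot C_j(\bm u)-\dot C_j\big(\bm u+t^*(\bm v_n(\bm u)-\bm u)\big)\big|\le Kkr\int_0^1 \frac{\diff s}{\xi_j(s)(1-\xi_j(s))},
\]
with $\xi_j(s)=u_j+s(v_{nj}(u_j)-u_j)$. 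The rest of the argument splits, at each coordinate $u_j$, into a bulk regime $u_j\in[2r,1-2r]$ (where $\xi_j(s)$ stays bounded away from $\{0,1\}$) and a boundary regime $u_j\in[0,2r)\cup(1-2r,1]$.

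In the bulk, on $A_1$ the whole segment $\xi_j(s)$ lies in $[u_j/2,(1+u_j)/2]$, so $\xi_j(s)(1-\xi_j(s))\ge u_j(1-u_j)/4$ and the derivative increment is at most $4Kkr/(u_j(1-u_j))$. To absorb this singularity I plan to establish a weighted concentration bound: with probability $\ge 1-d\delta$,
\[
\max_{j=1,\dots,d} \sup_{u\in[r,1-r]} \frac{|\alpha_{nj}(u)|}{\sqrt{u(1-u)}} \lesssim \sqrt{\log(1/(r\delta))}.
\]
This follows from pointwise Bernstein applied on a grid of $[r,1-r]$ of mesh $r$ (cardinality $\lesssim 1/r$), with the gaps filled in via the Mason-Shorack-Wellner modulus inequality already used in the proof of Lemma~\ref{lem:sn2-new}; since the required $\lambda$ for each fixed $j$ is $O(\sqrt{\log(1/(r\delta))})$, a plain union bound over $j$ preserves $\lambda$ at the cost of the factor $d$ in the failure probability. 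Combining with the derivative bound and using $u_j(1-u_j)\ge r$ throughout the bulk produces the pointwise estimate $\lesssim Kk\sqrt{r\log(1/(r\delta))}$.

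In the boundary regime I would use instead the trivial bound $|\dot C_j(\bm u)-\dot C_j(\bm u+t^*(\bm v_n(\bm u)-\bm u))|\le 2$ together with $|\alpha_{nj}(u_j)|\le\omega_{\alpha_{nj}}(2r)$, which holds because $\alpha_{nj}(0)=\alpha_{nj}(1)=0$ and either $|u_j|\le 2r$ or $|1-u_j|\le 2r$. The Mason-Shorack-Wellner modulus inequality, set up exactly as in the proof of Lemma~\ref{lem:sn2-new} and combined with a union bound over $j$, then gives $\max_j\omega_{\alpha_{nj}}(2r)\lesssim\sqrt{r\log(80/(r\delta))}$ with probability $\ge 1-d\delta$, producing a pointwise bound of the same order $\sqrt{r\log(80/(r\delta))}$.

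Summing over the at most $k$ indices in $I_{\bm u}$ and intersecting the three high-probability events (total exceptional probability $\le 4d\delta$) yields the claim with $K_3$ depending only on $k$ and $K$. The main technical obstacle is the weighted concentration step in the bulk: the singularity of $\ddot C_{ji}$ at the boundary has to be balanced against the fact that $|\alpha_{nj}(u)|$ is of order $\sqrt{u(1-u)}$, and the grid-plus-modulus discretization must be done carefully so that the logarithmic factor remains at $\log(1/(r\delta))$ rather than something like $\log(1/\delta)\cdot\log(1/r)$, which would spoil the claimed $\sqrt{r\log(\cdot)}$ rate.
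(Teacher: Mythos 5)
Your decomposition into a boundary region $u_j\in J_r=[0,2r)\cup(1-2r,1]$ and a bulk region $u_j\in[2r,1-2r]$, your treatment of the boundary via $|\alpha_{nj}(u_j)|\le\omega_{\alpha_{nj}}(2r)$ plus the Mason--Shorack--Wellner modulus inequality, and your treatment of the bulk via the second-order derivative bound against the weighted supremum $\max_j\sup_u|\alpha_{nj}(u)|/\sqrt{u(1-u)}$ are exactly what the paper does; even the bookkeeping of exceptional probabilities ($2d\delta$ for DKW, $d\delta$ for the boundary modulus, $d\delta$ for the bulk) matches. The one place where you diverge is the key weighted concentration step in the bulk: the paper invokes a ready-made weighted empirical process inequality, Corollary 11.2.1 in Shorack and Wellner, which directly bounds $\Prob(\sup_{\eta\le u\le 1/2}\alpha_{n1}^\pm(u)/\sqrt u\ge\eps)$ with a $\log(1/\eta)$ prefactor (absorbed via $\log x\le x/e$ into the stated $\log(80/(\delta r))$), whereas you propose to re-derive an equivalent bound from scratch via a grid of mesh $r$, Bernstein at each grid point, and the modulus inequality to fill the gaps. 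That discretization argument does work and yields the same $\sqrt{\log(1/(r\delta))}$ rate (the $O(1/r)$ union-bound factor from the grid, not just the $d$ factor over margins, is what produces the $\log(1/r)$ contribution, so be explicit about it), but it is more laborious than citing the Shorack--Wellner result, and one must also check that the Bernstein sub-Gaussian regime applies, which requires $\eps\lesssim\sqrt{n u(1-u)}$ and is guaranteed here because $u(1-u)\gtrsim r$ and $r\gtrsim n^{-1/2}$. A second, minor difference: you obtain the partial-derivative increment via an integral mean value representation with $|\ddot C_{ij}(\xi(s))|\le K/(\xi_j(s)(1-\xi_j(s)))$ and then use $\xi_j(s)(1-\xi_j(s))\ge u_j(1-u_j)/4$ on the DKW event, whereas the paper cites Lemma 4.3 of Segers (2012), which packages the same estimate in $\max$ form at the two endpoints; these are interchangeable. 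In short: same strategy, with a self-contained but heavier substitute for one cited inequality.
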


\begin{proof}
Recall
$
S_{n3}(\bm u) 
= 
\sum_{j \in I_{\bm u}} D_{n,j}(\bm u) 
$
with 
\[
D_{n,j}(\bm u) = \big\{ \dot C_j(\bm u) - \dot C_j(\bm u + t^*(\bm v_n(\bm u)- \bm u))  \big\} \alpha_{nj}(u_j).
\]
Define $J_r=[0, 2r) \cup (1-2r ,1]$. We may then write $S_{n3}(\bm u) = S_{n3}^b(\bm u) + S_{n3}^c(\bm u)$, where
\[
S_{n3}^b(\bm u) = \sum_{j \in I_{\bm u}} D_{n,j}(\bm u) \bm 1 (u_j \in J_r), \qquad
S_{n3}^c(\bm u) = \sum_{j \in I_{\bm u}} D_{n,j}(\bm u) \bm 1 (u_j \notin J_r),
\]
the upper indexes $b$ and $c$ standing for `boundary' and `center', respectively.

We start by bounding the boundary part. For that purpose, let $\bm u \in W_k$ be arbitrary and let $j \in I_{\bm u}$. Then, since $\dot C_j \in [0,1]$,
\begin{align*}
|D_{nj}(\bm u)| \bm 1(u_j \in J_r)  \le |\alpha_{n,j}(u_j)| \bm 1(u_j \in J_r)
&\le \sup_{u_j \in J_r} |\alpha_{nj}(u_j)| 
= \sup_{u_j \in J_r} |\alpha_{nj}(u_j) - \alpha_{nj}(0)| 
\le 
\omega_{\alpha_{nj}}(2r).
\end{align*}
As a consequence, 
\[
\sup_{\bm u \in W_k} |S_{n3}^b(\bm u)| \le k \max_{j=1}^{d} \omega_{\alpha_{nj}}(2r).
\]

Hence, as in \eqref{eq:bom1-new}, for $\lambda>0$ specified below, an application of formula (19) in Section 14.2 in \cite{ShoWel09} implies
\begin{align*}
\Prob\Big( \max_{j=1}^{d} \omega_{\alpha_{nj}}(2r) > \lambda \Big)
&\le 
d \times \Prob\Big( \omega_{\alpha_{n1}}(2r) > \lambda  \Big)
\le
80 \frac{d}{r} \exp\Big( - \frac{\lambda^2}{64r} \psi\Big(\frac{\lambda}{2n^{1/2}r}\Big)\Big).
\end{align*}
We will later choose $\lambda$ in such a way that $\lambda \le 2L \sqrt n r$, for some $L$ specified at the end of this proof. The fact that $\psi$ is decreasing then implies that $\psi(\lambda/(2\sqrt n r)) \ge \psi(L)$.
Hence, 
\begin{align*}
\Prob\Big( \max_{j=1}^d \omega_{\alpha_{nj}}(2r) > \lambda\Big)
\le
80 \frac{d}{r} \exp\Big( - \frac{\lambda^2 \psi(L)}{64r} \Big).
\end{align*}
Choosing
$
\lambda = \sqrt{64 r \log(80/(\delta r)) / \psi(L)}
$
the upper bound in the previous display can be written as
$
80 dr^{-1}\exp( - \log( 80 \delta^{-1} r^{-1})) = d \delta.
$
Overall, 
\begin{align}
\label{eq:sn3b-bound}
\sup_{\bm u \in W_k} |S_{n3}^b(\bm u)| \le k \lambda = 8k\psi(L)^{-1/2} \sqrt{r \log\Big(\frac{80}{\delta r}\Big)}
\end{align}
with probability at least $1-d\delta$, provided we can choose $\lambda$ sufficiently large such that $\lambda\le 2L\sqrt n r$. This in turn follows exactly by the same arguments that lead to \eqref{eq:sn2-inequality}.

It remains to consider $\sup_{\bm u \in W_k} |S_{n3}^c(\bm u)| $. Let $\bm u \in W_k$ and $j\in I_{\bm u}$. Then, by Lemma~4.3 in \cite{Seg12} and with $K$ from Condition~\ref{cond:c2-new}, 
\begin{align*}
|D_{nj}(\bm u)|  \bm 1(u_j \notin J_r)  
&\leq  
K  \max \Big( \frac1{u_j(1-u_j)}, \frac1{G^{-}_{nj}(u_j)(1-G^{-}_{nj}(u_j))} \Big)  \Big(\sum_{\ell \in I_{\bm u}} |G_{n\ell}^-(u_\ell) - u_\ell|  \Big) | \alpha_{nj}(u_j)| \bm 1(u_j \notin J_r) 
\nonumber \\&\le 
Kk \times C_{n1} \times C_{n2},
\end{align*}
where
\begin{align*}
C_{n1} &= \max_{\ell=1}^d \| G_{n\ell}^- - \mathrm{id} \|_\infty 
\\ 
C_{n2} &= \max_{j =1}^d \sup_{u_j \notin J_r} \max \Big( \frac1{u_j(1-u_j)}, \frac1{G^{-}_{nj}(u_j)(1-G^{-}_{nj}(u_j))} \Big) | \alpha_{nj}(u_j)| .
\end{align*}
As a consequence, since $|I_{\bm u}| \le k$,
\begin{align} \label{eq:sn3c-1-new}
\sup_{\bm u \in W_k} |S_{n3}^c(\bm u)| \le K k^2 \times C_{n1} \times C_{n2}.
\end{align}
We proceed by bounding $C_{n1}$ and $C_{n2}$. First, by the classical DKW-inequality from \eqref{eq:r-bound} and since $\| G_{n\ell}^- - \mathrm{id} \|_\infty=\| G_{n\ell} - \mathrm{id} \|_\infty$, we have
\begin{align} \label{eq:cn1-new}
C_{n1} \le r %= \sqrt{\frac1{2n}\log\Big(\frac1\delta\Big)}
\end{align}
with probability at least $1-2d\delta$. Subsequently, we work on this event.
Regarding $C_{n2}$, we then have, for $u_j \ge 2r$,
\begin{align*}
G_{nj}^-(u_j) 
= 
u_j \Big( 1+\frac{G_{nj}^-(u_j) - u_j}{u_j}\Big) 
 &\ge 
u_j \Big( 1 - \frac{\max_{j=1}^d \| G_{nj}^- - \mathrm{id} \|_\infty}{2r} \Big)
=u_j\Big(1-\frac{C_{n1}}{2r}\Big)
\ge \frac{u_j}{2}
\end{align*}
where we used \eqref{eq:cn1-new}. By the same argument we also have
$
1-G_{nj}^-(u_j) 
\ge 
(1-u_j)/2
$
for $u_j\ge 1-2r$.
As a consequence, 
\[
C_{n2} \le 4 \max_{j =1}^d \sup_{u_j \notin J_r}  \frac{| \alpha_{nj}(u_j)|}{u_j(1-u_j)}
\le 4 r^{-1/2} \max_{j =1}^d \sup_{u_j \notin J_r} \frac{| \alpha_{nj}(u_j)|}{(u_j(1-u_j))^{1/2}}  =: 4 r^{-1/2} D_n.
\]
Together with \eqref{eq:sn3c-1-new} and \eqref{eq:cn1-new}, we obtain that
\[
\sup_{\bm u \in W_k} |S_{n3}^c(\bm u)| \le 4  K k^2 r r^{-1/2} D_n = 4 K k^2 \sqrt r D_n.
\]
Note that 
\begin{align*}
\sup_{u \notin J_r} \frac{| \alpha_{n1}(u)|}{(u(1-u))^{1/2}} 
&= 
\max\Big\{ \sup_{u \in [2r, \frac12]} \frac{| \alpha_{n1}(u)|}{(u(1-u))^{1/2}}, 
\sup_{u \in [\frac12, 1-2r]} \frac{| \alpha_{nj}(u)|}{(u(1-u))^{1/2}} \Big\}
\\&\le
2\max\Big\{ \sup_{u \in [2r, \frac12]} \frac{| \alpha_{n1}(u)|}{u^{1/2}}, 
\sup_{u \in [\frac12, 1-2r]} \frac{| \alpha_{n1}(u)|}{(1-u)^{1/2}} \Big\},
\end{align*}
and that the two suprema inside the maximum on the right-hand side have the same distribution. Overall, 
by the union bound, for $\lambda>0$ specified below
\begin{align}
\Prob\Big(\sup_{\bm u \in W_k} |S_{n3}^c(\bm u)| > \lambda \Big)
&\le
2 d \times \Prob \Big(  \sup_{u \in [2r, \frac12]} \frac{| \alpha_{n1}(u)|}{u^{1/2}} > \frac{\lambda}{8Kk^2 \sqrt r} \Big).
\label{eq:sn3-c2-new}
\end{align}
We will next apply Corollary 11.2.1 in \cite{ShoWel09}, which states that, for $0 \le \eta \le 1/4$ and $\eps>0$,
\begin{align} \label{eq:sho1-new}
\Prob\Big( \sup_{\eta \le u \le 1/2} \frac{\alpha_{n1}^\pm(u)}{\sqrt u} \ge \eps \Big)
\le 6 \log\Big(\frac1{2\eta} \Big) \exp\Big( - \frac18 \gamma_{\pm} \eps^2\Big),
\end{align}
where  $a^+=\max(a,0)$ and $a^-=\max(-a,0)$ for $a\in\R$ and where $\gamma_-=1$ and 
\begin{align*}
\gamma_+= 
\begin{cases} \frac12 & \text{ if } \eps \le \frac32 (n \eta)^{1/2}, \\
\tfrac34 \frac{(n\eta)^{1/2}}{\eps} & \text{ if } \eps \ge \frac32 (n \eta)^{1/2}.
\end{cases}
\end{align*}
Suppose we have that $\gamma_+ \ge 1/c$ for some $c \ge 2$. Then $\gamma_{\pm} \ge 1/c$, and hence, since $|a|=a^++a^-$, Equation~\eqref{eq:sho1-new} implies
\begin{align*}
\Prob\Big( \sup_{\eta \le u \le 1/2} \frac{|\alpha_{n1}(u)|}{\sqrt u} \ge \eps \Big)
&\le 12 \log\Big(\frac1{2\eta} \Big) \exp\Big( - \frac1{32c} \eps^2\Big).
\end{align*}
We will later show that $\gamma_+ \ge 1/c$ is met if we choose $\eta=2r$ (note that $\eta \le 1/4$ by assumption) and $\eps=\lambda/(8Kk^2\sqrt r)$ as required in \eqref{eq:sn3-c2-new}, with $\lambda$ specified below. We then obtain, from \eqref{eq:sn3-c2-new},
\begin{align*}
\Prob\Big(\sup_{\bm u \in W_k} |S_{n3}^c(\bm u)| > \lambda \Big)
&\le
24 d \log\Big(\frac{1}{4r} \Big) \exp\Big( - \frac{\lambda^2}{c 2^{11}K^2k^4r} \Big). 
\end{align*}
This is equal to $d  \delta$ if we choose
\[
\lambda=\sqrt{c2^{11}K^2 k^4 r  \log\Big( \frac{24 \log (1/(4r)) }{\delta }\Big)}.
\]
Overall, we have shown that 
\begin{align}
\label{eq:sn3c-bound}
\sup_{\bm u \in W_k} |S_{n3}^c(\bm u)|
\le 
\sqrt{c}2^{11/2}K k^2 \sqrt{ r  \log\Big( \frac{24 \log (1/(4r)) }{\delta }\Big)}
\end{align}
with probability at least $1-3d\delta$. Together with \eqref{eq:sn3b-bound}, this implies
\[
\sup_{\bm u \in W_k} |S_{n3}(\bm u)| \le 
8k\psi(L)^{-1/2} \sqrt{r \log\Big(\frac{80}{\delta r}\Big)}
+
\sqrt{c}2^{11/2} K k^2 \sqrt{ r  \log\Big( \frac{24 \log (1/(4r)) }{\delta }\Big)}
\]
with probability at least $1-4d\delta$. Since $\log(x) \le x/e$ for $x\ge 1$, the second square root in the previous display is certainly smaller than the first one. Overall, we hence obtain
\[
\sup_{\bm u \in W_k} |S_{n3}(\bm u)| \le 
\big\{ 8k\psi(L)^{-1/2}+ \sqrt{c}2^{11/2} K k^2\big\} \sqrt{r \log\Big(\frac{80}{\delta r}\Big)}
\]
with probability at least $1-4d\delta$, which yields the assertion by defining $K_3$ as the term in curly brackets.

It remains to show that we can choose $c$ large enough so that $\gamma_+ \ge 1/c$, which, for the choices of $\eps, \eta$ and $\lambda$ from above, means that $\tfrac34 (n\eta)^{1/2}/\eps \ge 1/c$ must be satisfied. For proving this, we start by observing that
$\log(1/(4r)) \le 1/(4er)$, which implies
\begin{align*}
\log\Big( \frac{24 \log(1/(4r))} \delta \Big)
\le
\log\Big( \frac{6}e \frac1{\delta r} \Big)
&=
\log\Big( \frac{6 \sqrt 2}e \frac{\sqrt{n}}{\delta \sqrt{\log(1/\delta)}} \Big)
\\&\le 
\{\log(6\sqrt2 /e) +1/\sqrt{32} \}  \sqrt{n\log(1/\delta)},
\end{align*}
where the last inequality follows from the same calculation that lead to \eqref{eq:log-bound}. Thus,
\begin{align*}
\frac34\frac{\sqrt{n\eta}}\eps 
= 
\frac{3}{4} \frac{\sqrt{2nr}}{\lambda} 8 K k^2 \sqrt r    
& =
6\sqrt 2 K k^2    \frac{\sqrt{n}r} {\sqrt{c2^{11}K^2k^4r \log(24\log(1/(4r))/\delta)}  }
\\&\ge
\frac{6}{32 \sqrt{c}} \frac{\sqrt{nr}}{\sqrt{\{\log(6\sqrt2 /e) +1/\sqrt{32} \}  \sqrt{n\log(1/\delta)}}}
\\&=
\frac{3}{16 \sqrt{c} \sqrt{\{\log(6\sqrt2 /e) +1/\sqrt{32} \} \sqrt 2 } }.
\end{align*}
This expression is indeed larger than $1/c$ for sufficiently large $c$.
\end{proof}

\subsection{An almost sure bound on the maximum of empirical process marginals}

\begin{proposition}\label{prop:alphauni-all}
For $n\in \N$, let $\bm U_{1}^{(n)}, \dots, \bm U_{n}^{(n)}$ be independent $d=d_n$-dimensional random vectors with standard uniform margins, defined on a common probability space independent of $n$.
Fix $k\in\N$ and assume $d=d_n \ge k$ for all $n$. For $n\in\N$ let 
$\alpha_{n}(\bm u)=n^{-1}\sum_{i=1}^n \{ \bm 1(\bm U_{i}^{(n)} \le \bm u) - \Prob(\bm U_{i}^{(n)} \le \bm u) \}$ with $\bm u \in [0,1]^d$ denote the standard empirical process with $k$-dimensional margins
$\alpha_{n,I}(\bm u)=n^{-1}\sum_{i=1}^n \{ \bm 1(\bm U_{i,I}^{(n)} \le \bm u) - \Prob(\bm U_{i,I}^{(n)} \le \bm u) \}$ with $\bm u \in [0,1]^k$ for $I\in \mathcal I_k(d)$. 
Then
\begin{align*}
\limsup_{n \to \infty} \frac{\sup_{\bm u \in W_k} |\alpha_n(\bm u) |} {\sqrt{\log(nd^k)}}
=
\limsup_{n \to \infty} \frac{\max_{I \in \mathcal I_k(d)} \| \alpha_{n,I} \|_\infty} {\sqrt{\log(nd^k)}}
\leq \frac1{\sqrt{2}} \bm 1(k=1)+\bm 1(k\ge 2)
\end{align*}
almost surely.
\end{proposition}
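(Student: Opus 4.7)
The starting point is the structural identity
\[
\sup_{\bm u \in W_k} |\alpha_n(\bm u)| = \max_{I \in \mathcal I_k(d)} \|\alpha_{n,I}\|_\infty,
\]
which is asserted in the statement itself and holds because setting $u_j=1$ marginalises out the $j$th coordinate: any $\bm u \in W_k$ with $|I_{\bm u}| < k$ is recovered as a boundary evaluation of $\alpha_{n,I}$ for any $I \supset I_{\bm u}$ with $|I|=k$. Granted this, the proof reduces to (a) a sub-Gaussian tail bound on each $k$-variate marginal empirical process, (b) a union bound over the $|\mathcal I_k(d)| \le d^k$ index sets, and (c) Borel--Cantelli.

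For step (a) with $k=1$, I would invoke the Dvoretzky--Kiefer--Wolfowitz--Massart inequality $\Prob(\|\alpha_{n,I}\|_\infty > \lambda) \le 2 e^{-2\lambda^2}$, whose sharp exponent constant $M_2 = 2$ is responsible for the $1/\sqrt 2$ factor. For $k \ge 2$, any bound of the form $\Prob(\|\alpha_{n,I}\|_\infty > \lambda) \le M_1(k) e^{-M_2(k) \lambda^2}$ valid for $\lambda \ge \lambda_0(k)$ suffices, provided $M_2(k) \ge 1$; this is why the stated constant is $1$ rather than the sharper $1/\sqrt 2$ when $k \ge 2$. Such a sub-Gaussian bound is available for instance by applying Talagrand's inequality to the VC class of lower orthants in $[0,1]^k$, or by applying Proposition A.1 of \cite{Seg12} with $r=1$ together with the simple observation $\|\alpha_{n,I}\|_\infty \le \omega_{\alpha_{n,I}}(1)$ (since $\alpha_{n,I}(\bm 1)=0$).

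Given step (a), a union bound yields $\Prob(\sup_{\bm u \in W_k} |\alpha_n(\bm u)| > \lambda) \le M_1 d^k e^{-M_2 \lambda^2}$. Fix $\varepsilon > 0$ and set $\lambda_n = (1+\varepsilon) M_2^{-1/2} \sqrt{\log(n d^k)}$, i.e.\ $(1+\varepsilon)/\sqrt 2 \cdot \sqrt{\log(nd)}$ for $k=1$ and at most $(1+\varepsilon) \sqrt{\log(nd^k)}$ for $k \ge 2$. Then the probability reduces to $M_1 d^k (n d^k)^{-(1+\varepsilon)^2} = M_1 n^{-(1+\varepsilon)^2} d^{-k[(1+\varepsilon)^2 - 1]}$, which is summable in $n$. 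Borel--Cantelli then gives $\sup_{\bm u \in W_k} |\alpha_n(\bm u)| \le \lambda_n$ eventually almost surely, and dividing by $\sqrt{\log(n d^k)}$ and letting $\varepsilon \downarrow 0$ yields the claimed constants. The only (mild) technical obstacle is sourcing a multivariate sub-Gaussian concentration inequality with exponent constant at least $1$ in the $k \ge 2$ case; once that is in hand, the remainder is a routine Borel--Cantelli calculation.
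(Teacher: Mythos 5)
Your overall strategy — reduce to $k$-variate marginals, apply an exponential tail bound to each, union-bound over $|\mathcal I_k(d)|\le d^k$ index sets, and invoke Borel--Cantelli — is exactly the strategy the paper uses, and your handling of the $k=1$ case via the classical DKW inequality matches the paper verbatim. The gap is in step (a) for $k\ge 2$: the tail bound you need is asserted but not actually established, and neither of your proposed sources yields it directly. Proposition A.1 of \cite{Seg12}, applied with $r=1$ as you suggest, gives a bound of the form $M_1\exp(-M_2\lambda^2\psi(\lambda/\sqrt n))$ where $\psi$ is decreasing with $\psi(0)=1$ and $\psi(x)\to 0$ as $x\to\infty$. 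Since the proposition places no assumption whatsoever on how fast $d_n$ may grow, $\lambda_n/\sqrt n$ need not stay bounded, so the factor $\psi(\lambda_n/\sqrt n)$ degrades the exponent in an uncontrolled way and you cannot simply absorb it into $M_2$; worse, even when $\psi$ is bounded below you have no control over whether the resulting effective constant $M_2\cdot\psi(\cdot)$ is at least $1$, which your whole calibration relies on. Similarly, the Talagrand route requires you to track the mean of the supremum and the Bernstein correction term, neither of which is cheap to dispatch in the regime where $\lambda_n$ may be comparable to $\sqrt n$.

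The paper circumvents all of this with a single clean input: Naaman's multivariate DKW inequality (Theorem~4.1 in \cite{Naa21}), which gives
\[
\Prob\bigl(\|\alpha_{n,I}\|_\infty > \lambda\bigr) \le k(n+1)\exp(-2\lambda^2)
\]
for every $\lambda>0$, with the \emph{sharp} exponent $2$ (the same as univariate DKW) but at the price of a polynomial prefactor $k(n+1)$. This also corrects your explanation of why the constant worsens to $1$ when $k\ge 2$: it is \emph{not} because the available exponent is only $\ge 1$ rather than $2$ — the exponent is still $2$ — but because the prefactor $k(n+1)$ contributes an extra factor $n$ that must be dominated. Concretely, with $\lambda_n = c\sqrt{\log(nd^k)}$ the union-bounded probability is of order $n\cdot d^k\cdot (nd^k)^{-2c^2}\lesssim n^{1-2c^2}$, and summability forces $c>1$, not $c>1/\sqrt2$. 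So your Borel--Cantelli arithmetic is right, but the mechanism that produces the constant $1$ is different from the one you describe. To repair your write-up, replace the vague appeal to Talagrand/Segers by the explicit Naaman bound and redo the exponent bookkeeping to account for the $(n+1)$ prefactor.
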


\begin{proof}
We start by considering the case $k\ge 2$. Write  $A_n:=\max_{I\in \mathcal I_k(d)} \|\alpha_{n,I} \|_\infty$ and $a_n = \sqrt{\log (nd^k) }$.
For $\eps>0$, let $\lambda_n =\lambda_n(\eps)=(1+ \eps)a_n$  and note that it is sufficient to show that, for each $\eps>0$,
\begin{align}\label{eq:anbc2}
\Prob\Big( A_n > \lambda_n  \quad \text{ i.o.}\Big) =
\Prob\Big( \frac{A_n}{a_n}  > 1 + \eps \quad \text{ i.o.}\Big) = 0
\end{align}
where `i.o.' stands for `infinitely often'. Indeed, we then have $\Prob(\limsup_{n\to\infty} {A_n}/{a_n}>1) = \Prob(\exists \eps>0: A_n/a_n>1+\eps \text{ i.o}) = \lim_{\eps \downarrow 0}\Prob(A_n/a_n>1+\eps \text{ i.o.})=0$ by continuity of measures from below. Now, by the union bound and the multivariate Dvoretzky–Kiefer–Wolfowitz inequality, see Theorem 4.1 in \cite{Naa21}, we obtain
\begin{align*}
\Prob\Big(A_n > \lambda_n\Big) 
\le
\sum_{I \in \mathcal I_k(d)} \Prob\Big( \|\alpha_{n,I} \|_\infty > \lambda_n\Big)
&\le 
%d_n^k \max_{I \in \mathcal I_k(d_n)} \Prob\Big( \| \alpha_{n,I}\|_\infty > \lambda_n \Big)
%\\&=
d^k \max_{I \in \mathcal I_k(d)} \Prob\Big( \| \alpha_{n,I} \|_\infty > (1+ \eps) \sqrt{ \log (nd^k) } \Big) \\
&\le 
d^k \times k(n+1) \exp\Big( - 2 (1+\eps)^2  \log (nd^k) \Big )
\\&= 
k (n+1) d^k (nd^k)^{-2 (1+\eps)^2} \lesssim n^{-1-4\eps-2\eps^2}. %n^{-(1+\eps)^2}.
\end{align*}
We obtain that the sequence $\Prob(A_n > \lambda_n)$ is summable, which yields  \eqref{eq:anbc2} by the Borel-Cantelli lemma.

In the case $k=1$, the same proof applies, but with $a_n = \sqrt{\log(nd)/2}$ and with the multivariate DKW inequality from \cite{Naa21} replaced by the classical DKW inequality. \end{proof}

\begin{corollary} \label{corr:supcn}
Fix $k\in\N_{\ge 2}$ and assume that $d=d_n \ge k$ for all $n$ and that, for each $j \in \{1, \dots, d\}$, the $j$th first-order partial derivative $\dot C_j$ exists on $V_j \cap W_k$ (such that $\bar \Cb_n$ is well-defined on $W_k$). Then, 
\[
\sup_{\bm u \in W_k} |\bar \Cb_n(\bm u)| = O_\as((\log (nd))^{1/2}). 
\]
Moreover, under the conditions of Theorem~\ref{theo:main}, we have
\[
\sup_{\bm u \in W_k} |\Cb_n(\bm u)| = O_\as((\log (nd))^{1/2}). 
\]
\end{corollary}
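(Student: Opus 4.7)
The plan is to bound $\bar \Cb_n$ directly on $W_k$ using Proposition~\ref{prop:alphauni-all}, and then deduce the bound for $\Cb_n$ by combining with Theorem~\ref{theo:main}. For the first assertion, recall that for $\bm u \in W_k$ we have $|I_{\bm u}|\le k$, so
\[
\bar \Cb_n(\bm u)
= \alpha_n(\bm u) - \sum_{j \in I_{\bm u}} \dot C_j(\bm u)\, \alpha_{nj}(u_j),
\]
and the partial derivatives of a copula satisfy $|\dot C_j|\le 1$ wherever they exist. Hence
\[
\sup_{\bm u \in W_k}|\bar \Cb_n(\bm u)|
\le \sup_{\bm u \in W_k}|\alpha_n(\bm u)| + k \max_{j=1,\dots,d}\|\alpha_{nj}\|_\infty.
\]
Proposition~\ref{prop:alphauni-all} applied with the given $k\in\N_{\ge 2}$ gives $\sup_{\bm u \in W_k}|\alpha_n(\bm u)| = O_\as(\sqrt{\log(nd^k)}) = O_\as(\sqrt{\log(nd)})$ since $k$ is fixed, while applying the same proposition with $k=1$ yields $\max_{j}\|\alpha_{nj}\|_\infty = O_\as(\sqrt{\log(nd)})$. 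Adding the two bounds proves the first claim.

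For the second assertion, under the conditions of Theorem~\ref{theo:main} we have
\[
\sup_{\bm u \in W_k} |\Cb_n(\bm u) - \bar \Cb_n(\bm u)| = O_\as\!\big(n^{-1/4}(\log(nd))^{3/4}\big) = o_\as(\sqrt{\log(nd)}),
\]
because $n^{-1/4}(\log(nd))^{1/4} \to 0$ thanks to $\log d = o(n^{1/3})$. Combining with the first part via the triangle inequality gives $\sup_{\bm u \in W_k}|\Cb_n(\bm u)| = O_\as(\sqrt{\log(nd)})$, as required. No step here is a serious obstacle: the only subtlety is noting that Proposition~\ref{prop:alphauni-all} is applicable both to the full $k$-margin supremum (for $\alpha_n$ on $W_k$) and to the one-dimensional margins (to absorb the sum of $k$ pointwise terms), and that the Theorem~\ref{theo:main} error term is negligible relative to $\sqrt{\log(nd)}$.
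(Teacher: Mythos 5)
Your proof is correct and follows essentially the same approach as the paper's (which is stated much more tersely): decompose $\bar\Cb_n$ using $|I_{\bm u}|\le k$ and $|\dot C_j|\le 1$, invoke Proposition~\ref{prop:alphauni-all} for both the $k$-marginal and one-dimensional marginal suprema, and then absorb the Theorem~\ref{theo:main} error term since $n^{-1/4}(\log(nd))^{1/4}\to 0$ under $\log d = o(n^{1/3})$. The only cosmetic simplification available is to observe that $\max_j\|\alpha_{nj}\|_\infty \le \sup_{\bm u\in W_k}|\alpha_n(\bm u)|$ since $W_1\subset W_k$, avoiding the second application of the proposition, but your version is equally valid.
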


\begin{proof} %[Proof of Corollary~\ref{corr:supcn}]
The first assertion is an immediate consequence of Proposition~\ref{prop:alphauni-all}, the fact that $|I_{\bm u}|\le k$ for any $\bm u \in W_k$, and boundedness of the first-order partial derivatives of $C$, i.e., $|\dot C_j| \le 1$ for all $j$. The second assertion then follows from an application of Theorem~\ref{theo:main}.
\end{proof}

\section{Proofs for Section~\ref{sec:app}} 
\label{sec:proofapp}

\begin{proof}[Proof of Proposition~\ref{prop:asso}]
Subsequently, we write $s_n=n^{-1/4}(\log(nd))^{3/4}$. We start by observing that $\int D \diff \hat C_{n,I} = \int \hat C_{n,I} \diff D + \frac1n$ for all bivariate copulas $D$ and all $I\subset \{1, \dots, d\}$ with $|I|=2$. Indeed, let $(U,V) \sim D, (U', V') \sim \hat C_{n,I}$ be independent. Then $\Prob(U' \le U, V' \le V)=\int \hat C_{n,I} \diff D$ and likewise $\Prob(U\le U', V \le V') = \int D \diff  \hat C_{n,I}$.
Moreover, by the inclusion-exclusion principle, 
$
\Prob(U' \le U, V' \le V) 
=1- \Prob(U' \ge U) - \Prob(V' \ge V) + \Prob(U' \ge U, V' \ge V).
%= - n^{-1} + \Prob(U' \ge U, V' \ge V) , 
$
Since $U'$ is uniformly distributed on $\{1/n, \dots, n/n\}$ and independent of $U\sim \mathrm{Uniform}([0,1)])$, we have
$
\Prob(V \le V') = \Prob(U \le U') = \int_0^1 \int_0^{u'} dF_U(u) \, dF_{U'}(u_n) = \int_0^1 u' \, dF_{U'} (u') = \sum_{i=1}^n \frac{i}{n}\frac{1}{n} = \frac{n+1}{2n},
$
with $F_X$ the cdf of a random variable $X$. As a consequence, 
$
\Prob(U' \le U, V' \le V) 
%=1- \Prob(U' \ge U) - \Prob(V' \ge V) + \Prob(U' \ge U, V' \ge V).
= 1-2\frac{n+1}{2n} + \Prob(U' \ge U, V' \ge V) = - n^{-1} + \Prob(U' \ge U, V' \ge V),
$
which yields the claimed identity.

Using the identity $\int D \diff \hat C_{n,I} = \int \hat C_{n,I} \diff D + \frac1n$, some tedious but straightforward calculations imply that  $\hat \rho_{n,I} =\tilde \rho_{n,I} + r_{n,I}^\rho$ and $\hat \tau_{n,I}=\tilde \tau_{n,I} + r_{n,I}^\tau$, where
\begin{align*}
\tilde \rho_{n,I} =
12 \int \hat C_{n,I} \diff \Pi_2 - 3, \qquad 
\tilde \tau_{n,I} = 4 \int \hat C_{n,I} \diff \hat C_{n,I} - 1,
\end{align*}
and where $|r_{n,I}^\rho| \le 6/(n-1)$ and $|r_{n,I}^\tau| \le 4/(n-1)$; details are provided in the supplement. 
%$r_{n,I}^\rho=\frac{12}{n^2-1}\int \hat C_n \diff \Pi_2 - \frac{6}{n+1}$ and 
As a consequence, it is sufficient to prove \eqref{eq:association} with $\hat \gamma_{n,I}$ replaced by $\tilde \gamma_{n,I}$ for $\gamma\in\{\rho, \tau\}$. 

We start with $\gamma=\beta$. In that case, since $S_{n,I}^\beta = 4 \bar \Cb_{n,I}(\tfrac12,\tfrac12)$, whence 
$\max_{I\in \mathcal I_2(d)} |\sqrt n(\hat \beta_{n,I}-\beta_I) - S_{n,I}^\beta|= 
\max_{I\in \mathcal I_2(d)} |4\Cb_{n,I}(\tfrac12,\tfrac12)-4\bar \Cb_{n,I}(\tfrac12,\tfrac12)| = 
 O_\as(s_n)$ by Theorem~\ref{theo:main}.

Next, consider $\gamma=\rho$. In that case, a straightforward calculation shows that $12 \int \bar \Cb_{n,I} \diff \Pi_2=S_{n,I}^\rho$. Indeed, we may assume that $d=2$ and $I=\{1,2\}$, and then write $(U_i,V_i)$ instead of $\bm U_{i}$. First, since $\int \bm 1(U_i \le u, V_i \le v) \diff \Pi_2(u,v) = (1-U_i)(1-V_i)$, we have $\int \alpha_{n}(u,v) \diff \Pi_2(u,v) = n^{-1/2} \sum_{i=1}^n (1-U_i)(1-V_i) - \int C \diff \Pi_2$. Next, $\int \dot C_1(u, v) \alpha_n(u,1) \diff \Pi_2(u,v) = - n^{-1/2} \sum_{i=1}^n  \int C(U_i, v) \diff v - \int C \diff \Pi_2$, since $\int C(1,v)\diff v = 1-\int \Prob(V>v)\diff v = 1-\Exp(V)=1/2$ and,
\begin{align*}
&\int \int \dot C_1(u,v) \bm 1(U_i \le u) \diff u \diff v= \int C(1,v)-C(U_i, v) \diff v = \frac12 - \int C(U_i, v) \diff v \\
&\int \int \dot C_1(u,v) u \diff u \diff v= \int C(1,v)\diff v - \int C(u,v) \diff u \diff v = \frac 12 - \int C \diff \Pi_2,
\end{align*}
where we used partial integration in the second line: $\int_0^1 \dot C_1(u,v) u \diff u = C(1,v)\cdot 1 - \int_0^1 C(u,v) \diff u$. Assembling terms yields the claimed formula.
As a consequence of that formula, we have
$\max_{I\in \mathcal I_2(d)} |\sqrt n(\tilde \rho_{n,I}-\rho_I) - S_{n,I}^\rho|=\max_{I\in \mathcal I_2(d)} |12 \int (\Cb_{n,I} - \bar \Cb_{n,I}) \diff \Pi_2| = O_\as(s_n)$ by Theorem~\ref{theo:main} and boundedness of the integration domain.

Finally, consider $\gamma=\tau$. Then, using once again the identity $\int C_I \diff \hat C_{n,I} = \int \hat C_{n,I} \diff C_I + \frac1n$ from the beginning of the proof,
\begin{align*}
\frac14\sqrt n (\tilde \tau_{n,I} - \tau_I) &= \sqrt n \Big\{ \int \hat C_{n,I} \diff \hat C_{n,I}-\int C_I \diff C_I \Big\}
\\&= 
\sqrt n \Big\{\int (\hat C_{n,I} - C_I) \diff\hat C_{n,I} + \int C_I \diff \hat C_{n,I} - \int C_I \diff C_I \Big\} \\
&=
\sqrt n \Big\{\int (\hat C_{n,I} - C_I) \diff \hat C_{n,I} + \int (\hat C_{n,I} - C_I) \diff C_I + \frac1n \Big\}
\\&=
2 \int \Cb_{n,I} \diff C_{I}  + R_{n,I},
\end{align*}
where 
$R_{n,I} =  \frac1{\sqrt n}  + \sqrt n \int (\hat C_{n,I} - C_I) \diff (\hat C_{n,I}-C_{I} )
=\frac1{\sqrt n}+
\int \Cb_{n,I} \diff (\hat C_{n,I}-C_{I} )$.
Below, we will show %below that 
\begin{align} \label{eq:rntau}
\max_{I \in \mathcal I_2(d)}|R_{n,I}|=O_\as(s_n).
\end{align}
The assertion of the proposition then follows from Theorem~\ref{theo:main} and the fact that $2\int \bar \Cb_{n,I} \diff C_I = S_{n,I}^\tau$. For proving the latter, we may assume that $d=2$ and $I=\{1,2\}$, and then write $(U_i,V_i)$ instead of $\bm U_{i}$. By definition of the Stieltjes integral,
\begin{align*}
\int \bm 1(U_i \le u, V_i \le v) \diff C(u,v)
&=
\Exp_{(X,Y) \sim C}[ \bm 1(U_i \le X, V_i \le Y)]
\\&= 
\Exp_{(X,Y) \sim C}[ 1 - \bm 1(U_i > X) - \bm 1(V_i > Y) + \bm 1(U_i >X, V_i > Y)]
\\&=
1-U_i-V_i+C(U_i,V_i).
\end{align*}
As a consequence, $
\int \alpha_{n}(u,v) \diff C(u,v) = n^{-1/2} \sum_{i=1}^n \{1-U_i-V_i+C(U_i,V_i) - (\tau+1)/4 \}$, since $\int C \diff C=(\tau+1)/4$.
Next, note that 
\begin{align} \label{eq:dd1}
\int \dot D_{1}(u,v) g(u) \diff D(u,v)=\int \int \dot D_1(u,v) \dot D_1(u, \diff v) g(u)\diff u= \frac12\int g(u) \diff u
\end{align}
for any integrable function $g$ on $[0,1]$ and any copula $D$ satisfying Condition~\ref{cond:c1}. Indeed, if $(U,V)\sim D$, then the conditional distribution of $V$ given $U=u$ has cdf $v \mapsto \dot D_1(u,v)$. By the disintegration theorem, we obtain that
\[
\int \dot D_{1}(u,v) g(u) \diff D(u,v)
=
\int \int 
\dot D_{1}(u,v)g(u) P_{V \mid U=u}(\diff v) P_U(\diff u)
=
\int \int
\dot D_{1}(u,v)g(u) \dot D_1(u,\diff v)   \diff u.
\]
Since $v \mapsto \dot D_1(u,v)$ is continuous, we have $\int 
\dot D_{1}(u,v) \dot D_1(u,\diff v)=\Prob(X_u \le X'_u)=\frac12$, where $X_u, X'_u \sim \dot D_1(u,\cdot)$ are iid, which implies \eqref{eq:dd1}.
We apply this identity with $D=C$ and with $g(u)=\bm1(U_i \le u) - u$, and obtain
$
\int \dot C_{1}(u,v) \{\bm 1(U_i \le u)-u\} \diff C(u,v) 
=
\frac12 \int  \{\bm 1(U_i \le u)-u\} \diff u = \frac14 - \frac{U_i}2.
$
Thus
$
\int \dot C_{1}(u,v) \alpha_{n}(u,1) \diff C(u,v) 
=n^{-1/2} \sum_{i=1}^n \frac14 - \frac{U_i}2
$ and likewise,
$
\int \dot C_{2}(u,v) \alpha_{n}(1,v) \diff C(u,v) 
=n^{-1/2} \sum_{i=1}^n \frac14 - \frac{V_i}2.
$ Assembling terms implies $2\int \bar \Cb_{n} \diff C = S_{n,\{1,2\}}^\tau$ as asserted.

It remains to prove \eqref{eq:rntau}. First,
$
\int \Cb_{n,I} \diff (\hat C_{n,I}-C_{I} )
=
\int \bar \Cb_{n,I} \diff (\hat C_{n,I}-C_{I} ) + S_{n,I},
$
where $\max_{I \in \mathcal I_2(d)}|S_{n,I}|= O_\as(s_n)$ by Theorem~\ref{theo:main} and boundedness of the integration domain. Next, consider the grid $G=G_n$ defined by the points $(i/n,j/n)$ with $i,j\in\{1, \dots, n\}$. Write $\bar \Cb_{n,I}^G$ for the function on $[0,1]^2$ that is constantly equal to $\Cb_{n,I}(i/n, j/n)$ on the sets $(\frac{i-1}n, \frac{i}n] \times  (\frac{j-1}n, \frac{j}n]$ and zero elsewhere. Then,
\begin{align} \label{eq:tausn}
\int \bar \Cb_{n,I} \diff (\hat C_{n,I}-C_{I} )
=
\int (\bar \Cb_{n,I} - \bar \Cb_{n,I}^G) \diff (\hat C_{n,I}-C_{I} ) + \int \bar \Cb_{n,I}^G \diff (\hat C_{n,I}-C_{I} )
\end{align}
The first integral can be bounded by $\max_{I\in \mathcal I_2(d)} | \int (\bar \Cb_{n,I} - \bar \Cb_{n,I}^G) \diff (\hat C_{n,I}-C_{I} ) |
\le \max_{I \in \mathcal I_2(d)} 
2 \|\bar \Cb_{n,I}  - \bar \Cb_{n,I}^G\|_\infty$.
For $\bm v \in [0,1]^2$, write $\bm v_n^*$ for the closest grid point $(i/n, j/n)$ that is larger than $\bm v$ coordinatewise; note that $\bar \Cb_{n,I}^G(\bm v)=\bar \Cb_{n,I}(\bm v_n^*)$ by definition. Then, for $I=\{\ell,m\}\in\mathcal I$,
\begin{align*}
|\bar \Cb_{n,I}(\bm v) - \bar \Cb_{n,I}^G(\bm v)|
&=
|\alpha_{n,I}(\bm v)-\alpha_{n,I}(\bm v_n^*)  - \dot C_{I,\ell}(\bm v) \alpha_{n\ell}(v_1) + \dot C_{I,\ell}(\bm v_n^*) \alpha_{n\ell}(v_{n1}^*) \\
&\hspace{5cm}
-  \dot C_{I,m}(\bm v) \alpha_{nm}(v_2) + \dot C_{I,m}(\bm v_n^*) \alpha_{nm}(v_{n2}^*)|
\\&\le
\omega_{\alpha_{n,I}}(\tfrac1n) +
 |\dot C_{I,\ell}(\bm v) - \dot C_{I,\ell}(\bm v_n^*)| |\alpha_{n\ell}(v_1)| +
 \dot C_{I,\ell}(\bm v_n^*) | \alpha_{n\ell}(v_1) -\alpha_{n\ell}(v_{n1}^*) |
\\&\hspace{1.5cm}
+ |\dot C_{I,m}(\bm v) - \dot C_{I,m}(\bm v_n^*)| |\alpha_{nm}(v_2)| +
 \dot C_{I,m}(\bm v_n^*) | \alpha_{nm}(v_2) -\alpha_{nm}(v_{n2}^*) |
 \\&\le 
 3 \omega_{\alpha_{n,I}}(\tfrac1n) + \sup_{\bm v \in [0,1]^2}|\dot C_{I,\ell}(\bm v) - \dot C_{I,\ell}(\bm v_n^*)| |\alpha_{n\ell}(v_1)| 
 \\&\hspace{5cm} +  \sup_{\bm v \in [0,1]^2}|\dot C_{I,m}(\bm v) - \dot C_{I,m}(\bm v_n^*)| |\alpha_{nm}(v_2)|
\end{align*}
We have $\max_{I\in\mathcal I_2(d)}\omega_{\alpha_{nI}}(n^{-1})=O_{\as}(n^{-1/2} \log(nd))=o_\as(s_n)$, where the latter is true because $\log d \ll n$. Indeed, Equation \eqref{eq:bom2-new} with $r=n^{-1}$ and $k=2$ yields
\begin{align*}
 \Prob \Big( \max_{I \subset \{1, \dots, d\}: |I|\le 2} \omega_{\alpha_{n,I}} (n^{-1}) > \lambda\Big)
 &\le 
 M_1 nd^2 \exp\Big( - M_2 \lambda^2 n \psi\big( \lambda \sqrt n \big)\Big).
\end{align*}
Observing that $x^2 \psi(x)=(1+x)\log(1+x) - x = \log(1+x) + x \{ \log(1+x) - 1\} \ge x$ for  $x \ge e^2-1$,  the previous expression is upper bounded by
$
 M_1 nd^2 \exp( - M_2 \lambda \sqrt n)
$
for $\lambda \sqrt n \ge e^2-1$.
Setting $\lambda=c n^{-1/2}\log(nd)$, this becomes
$
 M_1 nd^2 \exp( - M_2 c \log(nd)) =  M_1 n^{1-cM_2}d^{2-cM_2},
$
which is summable in $n$ if we choose $c$ sufficiently large, and hence yields the claim by the Borel-Cantelli lemma.
The remaining terms in the penultimate display can be treated as in the proof of Lemma~\ref{lem:sn3-new}. For instance, the first supremum will be decomposed into a supremum over $\bm v$ such that either $v_1 \in [0, \delta_n) \cup(1-\delta_n, 1]$ or such that $v_1 \notin [0, \delta_n) \cup(1-\delta_n, 1]$, with $\delta_n:= 2n^{-1/2} (\log(nd))^{1/2}$.

It remains to consider the second integral on the right-hand side of \eqref{eq:tausn}. For that purpose, write $G_{i,j}=(\frac{i-1}n, \frac{i}n] \times  (\frac{j-1}n, \frac{j}n]$ and $C_I(A)$ for the $C_I$-measure of a Borel set $A$, such that
\begin{align*}
\Big| \int \bar \Cb_{n,I}^G \diff (\hat C_{n,I}-C_{I} ) \Big| 
&=
\Big|  \frac1{n^2} \sum_{i,j=1}^n \bar \Cb_{n,I}^G(i/n, j/n) \times (\hat C_{n,I}-C_I)(G_{i,j}) \Big| 
\\&\le
\|\bar \Cb_{n,I}\|_\infty \times 4  \| \hat C_{n,I} - C_I\|_\infty 
\\&\le
4n^{-1/2} \max_{I \in \mathcal I_2(d)}\|\bar \Cb_{n,I}\|_\infty \| \Cb_{n,I} \|_\infty = O_\as(n^{-1/2} \log(nd))=o_\as(s_n)
\end{align*} 
by Corollary~\ref{corr:supcn}. Overall, this implies \eqref{eq:rntau}.
\end{proof}

\begin{proof}[Proof of Theorem~\ref{theo:fwr}]
The result follows from Theorem 5.1 in \cite{Che13}. Using their notation, let $\Delta_2 := \max_{|I|=2} \frac1n \sum_{i=1}^n (\hat x_{i,I}- x_{i,I})^2$.
Note that, since $\int \Pi_2 \diff \hat C_{n,I} = \int \hat C_{n,I} \diff \Pi_2 + \frac1n$, for $I=\{\ell,m\}$,
\begin{align*}
\frac1{12}( \hat x_{i ,I}- x_{i,I}  ) 
&=
(1-\hat U_{i\ell})(U_{im}-\hat U_{im})
+
(1-U_{im})(U_{i\ell}- \hat U_{i\ell})  - 3 \int (\hat C_{n,I} - C_I) \diff \Pi_2 + \frac3n 
\\&\hspace{2.5cm} +\int_0^1 \{ \hat C_{n,I}(\hat U_{i\ell}, z) - C_I(\hat U_{i\ell},z) \} + \{ C_I(\hat U_{i\ell},z) - C_I(U_{i\ell},z)\} \diff z 
\\&\hspace{2.5cm} +\int_0^1 \{ \hat C_{n,I}(z,\hat U_{im}) - C_I(z,\hat U_{im}) \} + \{ C_I(z,\hat U_{im}) - C_I(z,U_{im})\} \diff z.
\end{align*}
Since $C_I$ is Lipschitz-continuous, we obtain the bound
\begin{align*}
\max_{I\in \mathcal I_2(d)}| \hat x_{i ,I}- x_{i,I}  | 
&\le \frac{36}n + 
\frac{12}{\sqrt n}\max_{I=\{\ell, m\} \subset \{1, \dots, d\}}\{ 2\| \alpha_{n\ell}\|_\infty +  2\| \alpha_{nm}\|_\infty + 5 \|\Cb_{n,I}\|_\infty\}
\\&=
O_\as(n^{-1/2} \log(nd)^{1/2}),
\end{align*}
where we made use of Proposition~\ref{prop:alphauni-all} and Corollary~\ref{corr:supcn}.
It follows that
\[
\Delta_2 := \max_{I\in \mathcal I_2(d)} \frac1n \sum_{i=1}^n (\hat x_{i ,I}- x_{i,I})^2 =O_\as(n^{-1}\log(nd))
\]
almost surely. Next, again using the same notation as in \cite{Che13}, let $\Delta_1 = \max_{I\in \mathcal I_2(d)}| \sqrt n (\hat \rho_{n,I} - \rho_I) - n^{-1/2} \sum_{i=1}^n x_{i,I} |$, which can be bounded by $O_{\as}(n^{-1/4}(\log(nd))^{3/4})$ by
Proposition~\ref{prop:asso}. Note that the rate holds uniformly in $(C^{(n)})_n \in (\Gamma^{(n)})_n$, because we fixed $K>0$ in advance. 

By our assumption on $d=d_n$, we obtain that Condition (M) in \cite{Che13} is met (with $p=d(d-1)/2$, $B_n$ sufficiently large and constant in $n$, $c_2$ sufficiently small and $C_2$ sufficiently large), uniformly in $(C^{(n)})_n\in (\Gamma^{(n)})_n$. Hence, by Theorem 5.1 in that reference, \eqref{eq:fer} is met.
\end{proof}

\begin{proof}[Proof of Proposition~\ref{prop:moeb}]
We start by proving \eqref{eq:moeb0} for fixed $k\in\N$. Note that $(\prod_{j=1}^n a_j) - (\prod_{j=1}^n b_j) = \sum_{\ell=1}^n (a_\ell - b_\ell) (\prod_{j=1}^{\ell-1} a_j)(\prod_{j=\ell+1}^n b_j)$ for any real numbers $a_1, \dots, a_n$, $b_1, \dots, b_n$. Hence, since $\hat C_{n,j}(u_j) = \lfloor nu_j \rfloor/n$, we have
\begin{align*}
\Mc_I(\hat C_n)  - \widebar \Mc_I(\hat C_n)
&=
\sum_{B \subset I} (-1)^{|I \setminus B|} \hat C_n(\bm u^{B}) \Big[ \prod_{j \in I \setminus B} \hat C_{n,j}(u_j) - \prod_{j \in I \setminus B} u_j\Big] \\
&=
\sum_{B \subset I} (-1)^{|I \setminus B|} \hat C_n(\bm u^{B}) \sum_{\ell \in I \setminus B} \Big(\frac{\lfloor nu_\ell \rfloor}n - u_\ell\Big) \Big(\prod_{j \in I \setminus B: j < \ell} u_j\Big) \Big(\prod_{j \in I \setminus B: j > \ell}\frac{\lfloor nu_j \rfloor}n\Big). 
\end{align*}
This implies \eqref{eq:moeb0} in view of the fact that $\sup_{u\in[0,1]}| \lfloor nu \rfloor/n -u|\le n^{-1}$ and $|\{B:B \subset I\}| \le 2^k$ for any $I$ with $2 \le |I| \le k$.

Subsequently, let $k=2$. As argued before Proposition~\ref{prop:moeb}, it is sufficient to show \eqref{eq:sni} with $S_{n,I}$ replaced by $\bar S_{n,I}$. Recall the decomposition $\bar S_{n,i}=U_{n,I} + V_{n,I}$, see \eqref{eq:univni}, and suppose we have shown that
\begin{align} \label{eq:gumbelapp-u}
\lim_{n\to\infty}\Prob\Big( \pi^4 \max_{I\in \mathcal I_2(d)}U_{n,I} - u_n \le y \Big) = \exp\Big\{ - \Big(\frac{\kappa^2}{8\pi}\Big)^{1/2} \exp\Big(-\frac{y}2\Big) \Big\},
\end{align}
and
\begin{align} \label{eq:vno}
\max_{I\in \mathcal I_2(d)} |V_{n,I}| =o_\Prob(1).
\end{align}
The assertion in \eqref{eq:sni} with $S_{n,I}$ replaced by $\bar S_{n,I}$ then follows from
\[
\pi^4\max_{I\in \mathcal I_2(d)} S_{n,I} - u_n \le \{ \pi^4 \max_{I\in \mathcal I_2(d)} U_{n,I} - u_n \} + \pi^4 \max_{I\in \mathcal I_2(d)} |V_{n,I}| ,
\]
and 
\[
\pi^4 \max_{I\in \mathcal I_2(d)} S_{n,I} - u_n \ge \{ \pi^4 \max_{I\in \mathcal I_2(d)} U_{n,I} - u_n \} - \pi^4 \max_{I\in \mathcal I_2(d)} |V_{n,I}|  .
\]

It remains to prove \eqref{eq:gumbelapp-u} and \eqref{eq:vno}, and we start with \eqref{eq:gumbelapp-u}. We will apply Theorem 4.2 in \cite{Drt20}. Identifying objects with those needed in that theorem and writing $I=\{j_1, j_2\}$, we have 
\[
h_I(\bm u, \bm v) 
= h_I((u_{j_1}, u_{j_2}), (v_{j_1}, v_{j_2})) 
= 
\frac{4}{\pi^{4}} \sum_{\ell_1,\ell_2 \in \N } \frac1{(\ell_1 \ell_2)^2} \prod_{s=1}^2 \cos(\ell_s \pi  u_{j_s}) \cos(\ell_s \pi v_{j_s}),
\]
where $\lambda_{\ell_1, \ell_2}=\frac1{(\pi^2 \ell_1 \ell_2)^2}$ are the eigenvalues and $\phi_{\ell_1, \ell_2}(u_1, u_2) = 2 \cos(\ell_1 \pi u_1) \cos(\ell_2 \pi u_2)$ the corresponding uniformly bounded eigenfunctions
of the integral operator that maps a function $g$ to the function $\bm u \mapsto \int h_I(\bm u, \bm v)g(\bm v) \diff \Pi(\bm v)$. Note that, up to a factor, this is the same expansion as in Examples 2.1-2.3 in \cite{Drt20}. The largest eigenvalue is $\lambda_1 := \lambda_{1,1} = \pi^{-4}$ (with multiplicity $\mu_1=1$) and the sum over all eigenvalues is $\Lambda := \sum_{\ell_1, \ell_2 \in \N} \frac1{(\pi^2 \ell_1 \ell_2)^2} = \frac{1}{36}$. Moreover, 
\[
\kappa^2 = \prod_{(\ell_1, \ell_2) \ne (1,1)} \Big(1- \frac{\lambda_{\ell_1, \ell_2}}{\lambda_{1,1}}\Big)^{-1} 
=\prod_{(\ell_1, \ell_2) \ne (1,1)}\Big( 1-\frac1{\ell_1^2\ell_2^2}\Big)^{-1}
= 2 \prod_{n=2}^\infty \frac{\pi/n}{\sin(\pi/n)},
\]
where we used that $\sin(\pi /n )/(\pi /n )=\sinc(\pi/n) = \prod_{k=1}^\infty (1-\frac1{k^2n^2})$ for $n \in \N$ and that $\prod_{k=2}^\infty (1-\frac1{k^2})=1/2$.
The assertion in \eqref{eq:gumbelapp-u} then follows from Theorem 4.2 in \cite{Drt20}, with the condition on $d=d_n$ derived from the proof of Corollary 4.1 in that reference.

It remains to prove \eqref{eq:vno}. A simple calculation shows that, for $I,J \in \Ic_2(d)$ and $i \in\{1, \dots, n\}$,
\[
\Cov_C(h_I(\bm U_i, \bm U_i), h_J(\bm U_i, \bm U_i))= \frac1{90} \bm 1(I=J)
\]
(which actually holds for all $C$ such that $C_I=\Pi_4$ for all $I \in \Ic_4$).
Let 
$
(Y_{d,I})_{I \in \mathcal I_2}$ be $\mathcal N_{d(d-1)/2}(0, \frac1{90}\bm I_{d(d-1)/2})
$-distributed,
where $\bm I_{d(d-1)/2}$ is the $d(d-1)/2$-dimensional identity matrix. 
Since $d=d_n$ satisfies $\log d =o(n^{1/5})$ as a consequence of our assumption on $d$, Theorem 2.1 in \cite{Che22} implies that
\begin{align*} %\label{eq:gaussapp2}
\lim_{n\to\infty}\sup_{t \in \R} | \Prob(\sqrt n  \max_{I\in \mathcal I_2(d)} | V_{n,I} | \le t) - \Prob(\max_{I\in \mathcal I_2(d)} |Y_{d,I}| \le t) | = 0.
\end{align*}
As in the proof of Corollary~\ref{cor:gaussapp}, Lemma 1 in \cite{Deo72}  implies that
\begin{align*} %\label{eq:gumbelapp2}
\lim_{n \to \infty}\Prob\Big[ \sqrt{2 \log c_n}\Big\{  \sqrt{90n} \max_{I\in \mathcal I_2(d)} | V_{n,I} | - \Big( \sqrt{2 \log c_n} - \frac{ \log(4\pi \log c_n)-4}{2 \sqrt{2\log c_n}} \Big) \Big\}  \le t \Big] = e^{-e^{-t}}
\end{align*}
for all $t\in\R$. The latter straightforwardly implies \eqref{eq:vno}.
\end{proof}

\appendix

\section{Proofs for Example~\ref*{ex:copulas}}
\label{sec:example-proofs}

We start by mentioning the following useful observation: for $d$-dimensional copulas with $d\ge 3$, the mixed second order partial derivatives are bounded by bivariate marginal copula densities. For instance,
\begin{align*}
\ddot C_{12}(u_1, u_2, u_3) 
%&= 
%\frac{\partial^2}{\partial u_1 \partial u_2} \Prob(U_1 \le u_1, U_2 \le u_2, U_3 \le u_3) 
&= 
\Prob(U_3 \le u_3 \mid U_1 = u_1, U_2 = u_2)
c_{12}(u_1, u_2)
\end{align*}
where $c_{12}$ denotes the density of $(U_1, U_2)$. As a consequence, suitable bounds on the bivariate marginal densities carry over to bounds on the mixed-second order partial derivatives.

\subsection{The multivariate Gaussian copula} 
For a full-rank correlation matrix $\Sigma=(\rho_{i,j})_{i,j=1, \dots, d}$, the $d$-variate Gaussian copula is given by
\[
C(\bm u) = \Phi_\Sigma( x_1, \dots, x_d),
\]
where $x_j=\Phi^{-1}(u_j)$. In view of the fact that the $k$-variate margins of the Gaussian copula are Gaussian copulas again (with the respective correlation matrix obtained by suitable omission of rows and columns), it is sufficient to show the claim for $k=d$. The first order partial derivatives are continuous as argued in Example 5.1 in \cite{Seg12}, so it remains to consider the second order partial derivatives.

We start with the case $d=2$, and write $\rho \in (-1,1)$ for the correlation parameter. The respective bivariate Gaussian copula density is then given by $c_\rho(u,v) = \ddot C_{12}(u,v) = \varphi_\rho(x,y) / \{\varphi(x) \varphi(y)\}$, where $x=\Phi^{-1}(u)$ and $ y=\Phi^{-1}(v)$. A straightforward calculation shows that, for $(u,v) \in (0,1)^2$,
\begin{align} \label{eq:gauss-copula-density}
c_\rho(u,v) = \frac1{\sqrt{1-\rho^2}} \exp\Big[ - \frac14 \frac\rho{1-\rho^2} \big\{ (1+\rho) (x-y)^2 - (1-\rho)(x+y)^2\big\} \Big],
\end{align}
which is clearly continuous.

For deriving bounds on $c_\rho(u,v)$, we restrict attention to the case $\rho>0$, and consider the lower tail where $u,v$ are close to 0; the other cases can be treated by similar arguments.
Consider the term in curly brackets in \eqref{eq:gauss-copula-density}. It can be rewritten as
\begin{align*}
(1+\rho) (x-y)^2 - (1-\rho)(x+y)^2
&=
2\rho (x^2+y^2) - 4 xy
\\ &=
\frac2{\rho} \big\{ (\rho x - y)^2 - y^2(1-\rho^2) \big\} 
\ge 
- \frac{2(1-\rho^2)}{\rho} y^2;
\end{align*}
note that the inequality is sharp for $x=y/\rho$.
Using the bound $|\Phi^{-1}(u)| = \Phi^{-1}(1-u) \le \sqrt{2\log(1/u)}$ for $u \le 1/2$ (see, e.g., Proposition 4.1 in \citealp{BouTho12}), we obtain
\[
c_\rho(u,v) \le \frac1{\sqrt{1-\rho^2}} \exp\Big\{ \frac14 \frac\rho{1-\rho^2} \frac{2(1-\rho^2)}{\rho} y^2 \Big\}
=
 \frac1{\sqrt{1-\rho^2}} e^{y^2/2} 
 \le 
 \frac1{\sqrt{1-\rho^2}} \frac1v
\]
for $0<v \le 1/2$.
Interchanging the roles of $x$ and $y$, we obtain that
$c_\rho(u,v) \le (1-\rho^2)^{-1/2}(u \vee v)^{-1} \le (1-\rho^2)^{-1/2}(u(1-u) \vee v(1-v))^{-1}$ for all $0<u,v\le 1/2$, as required in Condition~\ref{cond:c2-new}.

Next, consider the second order partial derivative $\ddot C_{jj}$; clearly, it suffices to consider $j=1$. For $u\in(0,1)$, we have
\[
\ddot C_{11}(u,v) = 
\begin{dcases}
- \frac{\rho}{\sqrt{1-\rho^2}} \varphi\Big( \frac{y-\rho x}{\sqrt{1-\rho^2} } \Big) \frac{1}{\varphi(x)}, & v \in (0,1)\\
0 & v \in \{0,1\}
\end{dcases}
\]
see Formula (58) in \cite{OmeGijVer09} (or the calculations below in the case $d\ge 3$). This function is clearly continuous on $(0,1)\times [0,1]$, and since $\varphi(x) \ge u/\sqrt{2\pi}$ for $u\in[0,1/2]$, we obtain the bound
\[
|\ddot C_{11}(\bm u)| \le \frac{|\rho|}{\sqrt{1-\rho^2}} \frac1u
\le \frac{|\rho|}{\sqrt{1-\rho^2}} \frac1{u(1-u)}
\]
for $u\le 1/2$.
Overall, the bivariate Gaussian copula satisfies Condition~\ref*{cond:c2-new} if $|\rho|<1$, and the multivariate Gaussian satisfies Condition~\ref*{cond:c2-new} with $k=2$ if $\max_{i \ne j}| \rho_{ij}| \le \rho_0$ for some $\rho_0<1$, with $K= |\rho_0|(1-\rho_0^2)^{-1/2}$.

Next, consider the case $d\ge 3$. For $j\in\{1, \dots, d\}$ and  $u_j \in (0,1)$, the definition of $C$ yields
\[
\dot C_j(\bm u) = \dot\Phi_{\Sigma,j}( x_1, \dots, x_d) \frac{1}{\varphi(x_j)}
=
\Prob(\bm X_{-j} \le \bm x_{-j}\mid X_j = x_j),
\]
where $\bm X \sim \Nc(0, \bm \Sigma)$. Here, the last equation holds because
\begin{align*}
\dot\Phi_{\Sigma,1}( x_1, \dots, x_d) 
&=
\lim_{h \downarrow 0} h^{-1} \Big\{ \Prob(X_1 \le x_1+h, \bm X_{-1} \le \bm x_{-1}) -  \Prob(X_1 \le x_1, \bm X_{-1} \le \bm x_{-1}) \Big\}
\\&=
\lim_{h \downarrow 0}\frac{\Prob(X_1 \in (x_1, x_1+h], \bm X_{-1} \le \bm x_{-1})}{\Prob(X_1 \in (x_1, x_1+h])}  \frac{\Prob(X_1 \in (x_1, x_1+h])}{h}
\\&=
\lim_{h \downarrow 0}\Prob(\bm X_{-1} \le \bm x_{-1} \mid X_1 \in (x_1, x_1+h])\frac{\Phi(x_1+h)-\Phi(x_1)}{h}
\\&=
\Prob(\bm X_{-1} \le \bm x_{-1} \mid X_1 = x_1) \varphi(x_1).
\end{align*}
Write $\Sigma^{(-j)}\in \R^{(d-1) \times (d-1)}$ for the matrix obtained by deleting the $j$th row and column from $\Sigma$, and write $\sigma^{(j)} =(\rho_{ij})_{i \ne j}\in \R^{d-1}$ for the deleted column without the $j$th entry. 
By the properties of normal distributions, we have 
\[
(\bm X_{-j} \mid X_{j} = x_j) =_d \Nc_{d-1}( x_j\sigma^{(j)} , \Xi^{(j)}),
\]
$\Xi^{(j)}= \Sigma^{(-j)}-\sigma^{(j)} (\sigma^{(j)})^\top $.
As a consequence, we may write, for $\bm u \in V_j$,
\begin{align}
\label{eq:gauss-d1}
\dot C_j(\bm u) 
=
\Phi_{\Xi^{(j)}}(\bm x_{-j} - x_j\sigma^{(j)}).
\end{align}
Hence, for $\bm u \in V_j$ such that all coordinates of $\bm u_{-i}$ are non-zero and such that $\bm u_{-j} \ne \bm 1 \in  \R^{d-1}$,
\[
\ddot C_{jj}(\bm u) = - \sum_{i \ne j} \dot \Phi_{\Xi^{(j)}, i}(\bm x_{-j} - x_j\sigma^{(j)}) \frac{ \rho_{j,i} }{\varphi(x_j)},
\]
while elementary calculations show that $\ddot C_{jj}(\bm u)=0$ if some coordinates of $\bm u_{-j}$ are zero or if $\bm u_{-1}=\bm 1$. 
Overall, writing $W= (0,1]^{d-1} \setminus\{ \bm 1\}$, we have, for all $\bm u \in V_j$,
\[
\ddot C_{jj}(\bm u) = 
\begin{dcases}
- \sum_{i \ne j} \dot \Phi_{\Xi^{(j)}, i}(\bm x_{-j} - x_j\sigma^{(j)}) \frac{ \rho_{ij} }{\varphi(x_j)}, & \bm u_{-j} \in W \\
0 & \bm u_{-j} \in W^c.
\end{dcases}
\]
Similar as before, we may write 
\[
\ddot \Phi_{\Xi^{(j)}, i}(\bm z) = \Prob(\bm Z_{-i}^{(j)} \le  \bm z_{-i} \mid Z_i^{(j)} = z_i) f_{Z_i^{(j)}}(z_i)
= H_{ij}(\bm z_{-i} \mid z_i)  \varphi\Big( \frac{z_i }{ (1-\rho_{ij}^2 )^{1/2} }\Big) \frac{1}{  (1-\rho_{ij}^2 )^{1/2}}
\]
where $\bm Z^{(j)} \sim \mathcal N_{d-1}(\bm 0, \Xi^{(j)})$, where $H_{ij}(\bm z_{-i} \mid z_i) = \Prob(\bm Z_{-i}^{(j)} \le  \bm z_{-i} \mid Z_i^{(j)} = z_i) $ and where $f_{Z_i^{(j)}}$ is the density of $Z_i^{(j)}$; note that $\Var(Z_i^{(j)}) = \Xi^{(-j)}_{ii} =1-\rho_{ij}^2$. Hence, for $\bm u \in V_j$ such that $\bm u_{-j} \in W$,
\[
\ddot C_{jj}(\bm u)
= 
- \sum_{i \ne j} \rho_{ij} H_{ij}(\bm x_{-j} - x_j\sigma^{(j)} \mid x_i - x_j \rho_{ij}) \frac{  \varphi\big( (x_i - x_j \rho_{ij})/ (1-\rho_{ij}^2)^{1/2}\big)}{  (1-\rho_{ij}^2)^{1/2} \varphi(x_j)}.
\]
If $\bm u_{-j} \to \bm 1$, we have $x_i \to \infty$ for all $i\ne j$, and the previous expression goes to zero, since $|H_{ij}| \le 1$. If some coordinates of $\bm u_{-j}$ go to zero, say, only the $k$th one, then $x_k \to -\infty$, and hence both $  \varphi( (x_i - x_j \rho_{ij})/(1-\rho_{ij}^2)^{1/2})$ and $H_{ij}(\bm x_{-j} - x_j\sigma^{(j)} \mid x_i - x_j \rho_{ij})$ go to zero for all $i\ne k$. This yields continuity of $\ddot C_{jj}$ on $V_j$. Moreover, since $\varphi(x_j) \ge u_j/\sqrt{2\pi}$ for $u_j$ near zero, we get that 
\[
\big| \ddot C_{jj}(\bm u) \big| 
\le 
u_j^{-1} \sum_{i \ne j} \Big| \frac{\rho_{ij}}{ (1-\rho_{ij}^2)^{1/2}} \Big| \bm1(u_i < 1)
= 
u_j^{-1} \sum_{i \ne j} \Big| \frac{\rho_{ij}^2}{ 1-\rho_{ij}^2} \Big|^{1/2} \bm1(u_i < 1)
\]
(note that we retrieve the result for $d=2$ from above).

It remains to treat the mixed partial derivatives $\ddot C_{ij}$, for which the bounds follow from the established bivariate bounds by the argument given in the first paragraph of this section. 
%for which the bounds are immediate as stated in the first paragraph of this section. 
We only need to show continuity on $V_i \cap V_j$. From \eqref{eq:gauss-d1}, we get, for all $\bm u \in V_i \cap V_j$,
\[
\ddot C_{ij}(\bm u) = \frac{\dot \Phi_{\Xi^{(j)}, i}(\bm x_{-j} - x_j \sigma^{(j)})}{\varphi(x_i)}
=
H_{ij}(\bm x_{-j} - x_j \sigma^{(j)} \mid x_i - \rho_{ij}x_j)\frac{\varphi(x_i - \rho_{ij}x_j)}{\varphi(x_i)}
\]

Overall, if there exists $\rho_0<1$ such that $\max_{i\ne j} |\rho_{ij}| \le \rho_0$, we obtain that Condition 2.3 is met with $K=(d-1) \{ \rho_0^2/(1-\rho_0^2) \}^{1/2}$.

\subsection{The multivariate Hüsler-Reiss copula} 
The multivariate Hüsler-Reiss copula \citep{Eng15} has the important property that bivariate margins are again Hüsler-Reiss. Specifically, with $\lambda=\lambda_{ij}$ the parameter controlling the $(i,j)$-margin, the bivariate Hüsler-Reiss copula is given by 
\[
C_\lambda(u,v) = \exp\Big\{ \log(uv) A_\lambda \Big( \frac{\log v}{\log(uv)} \Big) \Big\}
\]
with Pickands dependence function
\[
A_\lambda(t) = (1-t) \Phi\Big( \lambda + \frac1{2\lambda} \log\Big( \frac{1-t}{t} \Big) \Big) + t \Phi\Big( \lambda + \frac1{2\lambda} \log\Big( \frac{t}{1-t} \Big) \Big), \qquad t \in [0,1],
\]
with parameter $\lambda \in [0,\infty]$ and with $\Phi$ the cdf of the standard normal distribution \citep{GudSeg10}. Note that $\lambda=0$ corresponds to complete dependence, and $\lambda=\infty$ corresponds to independence. Hence, it is sufficient to consider $\lambda \in (0,\infty)$. In view of Example 5.3 in \cite{Seg12}, $C_\lambda$ satisfies Condition~\ref{cond:c1} with $K=1+M(\lambda)$ if
\[
M(\lambda)
:=
\sup_{t \in (0,1)} t(1-t) A_\lambda''(t) < \infty.
\]
Subsequently, we will show that $M(\lambda) \le (L/4) (\lambda^{-1}+\lambda^{-2})$ for some universal constant $L$, which implies that $C_\lambda$ satisfies Condition~\ref{cond:c1} for any $\lambda>0$, and that the multivariate Hüsler-Reiss copula satisfies Condition~\ref{cond:c2-new} with $k=2$ and $K=(L/4) (\lambda_0^{-1}+\lambda_0^{-2})$, provided that $\min_{i\ne j} \lambda_{i,j} \ge \lambda_0>0$.

Write $c_{\lambda}(t) = (2\lambda)^{-1} \log((1-t)/t)$, such that 
\[
A_\lambda(t) = (1-t) \Phi\big( \lambda + c_{\lambda}(t) \big) + t \Phi\big( \lambda -c_{\lambda}(t)  \big).
\]
Note that $c_{\lambda}'(t) = - \{ 2\lambda t (1-t)\}^{-1} $. Hence, 
\begin{align*}
    A_\lambda'(t) 
    = 
    -  \Phi\big( \lambda + c_{\lambda}(t) \big) 
    -  \frac1{2\lambda t} \varphi\big( \lambda + c_{\lambda}(t) \big) 
    +  \Phi\big( \lambda - c_{\lambda}(t) \big)
    + \frac1{2\lambda (1-t)}  \varphi\big( \lambda - c_{\lambda}(t) \big) ,
\end{align*}
with $\varphi=\Phi'$ the standard normal density.
Next, observe that $\varphi'(t)=-t\varphi(t)$. Hence,
\begin{align*}
    A_\lambda''(t) 
    &= 
    \frac1{2\lambda t(1-t)} \varphi\big( \lambda + c_{\lambda}(t) \big) 
    + \frac1{2\lambda t^2} \varphi\big( \lambda + c_{\lambda}(t) \big)  \Big\{ 1 - \frac{\lambda + c_\lambda(t)}{2 \lambda  (1-t)} \Big\}  
    \\& \hspace{.5cm}
    + \frac1{2\lambda t(1-t)} \varphi\big( \lambda - c_{\lambda}(t) \big) 
    + \frac1{2\lambda (1-t)^2} \varphi\big( \lambda - c_{\lambda}(t) \big)  \Big\{ 1 - \frac{\lambda - c_\lambda(t)}{2 \lambda  t} \Big\}  
    \\ &=
    \varphi\big( \lambda + c_{\lambda}(t) \big) \frac{\lambda - c_\lambda(t)}{4 \lambda^2 t^2 (1-t)} + \varphi\big( \lambda - c_{\lambda}(t) \big) \frac{\lambda + c_\lambda(t)}{4 \lambda^2 t (1-t)^2}.
\end{align*}
Hence,
\begin{align*}
    t(1-t) A_\lambda''(t) 
    &=
    \frac{1}{4\lambda^2} \Big\{ \frac{\lambda - c_\lambda(t)} t \varphi\big( \lambda + c_{\lambda}(t) \big)+ \frac{\lambda + c_\lambda(t)}{1-t} \varphi\big( \lambda - c_{\lambda}(t) \big) \Big\}.
\end{align*}
Define
\[
g_\lambda(t) := \frac{\lambda - c_\lambda(t)} t \varphi\big( \lambda + c_{\lambda}(t) \big).
\]
Observing that $c_\lambda(1-t) = - c_\lambda(t)$, we may write
\begin{align} \label{eq:expression-app}
t(1-t) A''(t) = \frac{1}{4\lambda^2} \{ g_\lambda(t) + g_\lambda(1-t) \}.
\end{align}
It is hence sufficient to show that $g_\lambda(t)$ is bounded from above on $(0,1)$. 
For that purpose, define $\ell_\lambda = 1/\{1+\exp(2 \lambda^2)\}$ and $u_\lambda = 1/\{1+\exp(-2 \lambda^2)\}$, and note that $0 < \ell_\lambda < 1/2 < u_\lambda < 1$. 
Further, we note that $t \mapsto c_\lambda(t)$ is decreasing with $\lim_{t\to 0} c_\lambda(t)=\infty$, $\lim_{t\to 1} c_\lambda(t)=-\infty$ and that $t\le 1/2$ is equivalent to $c_\lambda(t) \ge 0$. 

We now distinguish four cases: 
\begin{compactitem}
\item 
For $t \in  (0, \ell_\lambda]$, we have $\lambda\le c_\lambda(t)$, and hence $g_\lambda(t) \le 0$. 
\item 
For $t\in(\ell_\lambda,1/2]$, we have $\lambda \ge c_\lambda(t) \ge 0 $, and hence
\begin{align*}
\lambda + c_{\lambda}(t) 
\ge 2c_\lambda(t) = \lambda^{-1} \log((1-t)/t) \ge \lambda^{-1} \log(1/(2t)) 
&\ge \lambda^{-1} \log\big(\tfrac12 \{ 1+\exp(2\lambda^2) \} \big)
\\&\ge 
\lambda^{-1} \log\big(\tfrac12 \exp(2\lambda^2) \big).
\end{align*}
Therefore, since $c_\lambda(t) \ge 0$ and since $\varphi$ is decreasing on $[0,\infty)$
\begin{align*}
g_\lambda(t) 
\le 
\frac{\lambda}{s_\lambda(t)} \varphi\big( \lambda^{-1} \log\big(\tfrac12 \exp(2\lambda^2) \big) \big)
&\le 
\lambda \exp(2\lambda^2) \frac1{\sqrt{2\pi}} \exp\Big(- \frac1{2\lambda^{2}} \big( 2\lambda^2 - \log 2\big)^2 \Big)
\\&=
\lambda\frac1{\sqrt{2\pi}} \exp\Big( 2 \log 2 - \frac{\log^2 2}{2\lambda^2}\Big) 
\\&\le 
\lambda\frac{e^4}{\sqrt{2\pi}}. 
\end{align*}
\item 
For $t \in (1/2, u_\lambda)$, we have $-\lambda < c_\lambda(t) <0$, which implies $\lambda-c_\lambda(t) \le 2 \lambda$ and hence, since $t\ge 1/2$,
\begin{align*}
g_\lambda(t) 
= 4 \lambda
\varphi\big( \lambda + c_{\lambda}(t) \big) 
\le \lambda \frac{4}{\sqrt{2\pi}}.
\end{align*}

\item For $t \in[ u_\lambda,1)$, we have $c_\lambda(t) \le - \lambda<0$. Hence, since $t\ge 1/2$,
\begin{align*}
g_\lambda(t) 
= \frac{\lambda + |c_\lambda(t)|} t \varphi\big( \lambda - |c_{\lambda}(t)| \big)
&\le 
 2\big(\lambda + |c_\lambda(t)|\big) \varphi\big( \lambda - |c_{\lambda}(t)| \big)
\\&=
2 \big(|c_\lambda(t)|-\lambda+2\lambda\big) \varphi\big( |c_{\lambda}(t)|-\lambda \big)
\\&\le
2 + \lambda \frac{4}{\sqrt{2\pi}},
\end{align*}
where we used that $\varphi$ is symmetric and bounded by $1/\sqrt{2\pi}$, and that $x \varphi(x)$ is bounded by~1.
\end{compactitem}
Overall, we have shown that there exists a universal constant $L$ such that
\[
g_\lambda(t) \le L(1+\lambda)
\]
for all $t\in(0,1)$. In view of \eqref{eq:expression-app}, we hence obtain that 
\[
M(\lambda) =  \sup_{t\in(0,1)} t(1-t) A''(t)
\le \frac{L}{4} (\lambda^{-1}+\lambda^{-2}
\]
as asserted.

\subsection{The Clayton copula} We start by some generalities on Archimedean copulas.
Recall that a copula $C$ is Archimedean if
\[
C(\bm u) =\psi\big( \psi^{-1}(u_1) + \dots + \psi^{-1}(u_d)\big)
\]
for some Archimedean generator $\psi$, that is, for a function $\psi:[0,\infty] \to [0,1]$ that is nonincreasing, continuous, satisfies $\psi(0)=1$ and $\psi(\infty) = 0$ and is strictly decreasing on $[0, \inf\{x: \psi(x)=0\})$. Her, the inverse $\psi^{-1}(x)$ is the usual inverse for $x\in(0,1]$ and $\psi^{-1}(0)=\inf\{u:\psi(u)=0\}$. As shown in \cite{McnNes09}, $C$ defined as in the previous display is a copula if and only if $\psi$ is $d$-monotone on $[0,\infty)$.

Subsequently, we write $\phi=\psi^{-1}$, such that
\[
C(\bm u) =\phi^{-1}\big( \phi(u_1) + \dots + \phi(u_d)\big),
\]
and we restrict attention to the case where $\phi$ is twice continuously differentiable on $(0,1)$ with $\phi(0+)=\infty$. Clearly, $\dot C_i(\bm u) = \ddot C_{ii}(\bm u)=\ddot C_{ij}(\bm u)=0$ for $\bm u \in [0,1]^d$ such that $C(\bm u)=0$. 
If $C(\bm u)>0$, we have
\begin{align*}
\dot C_i(\bm u) &= \frac{\phi'(u_i)}{\phi'(C(\bm u))}, & & \bm u \in V_i,\\
\ddot C_{ii}(\bm u) &= \frac{\phi''(u_i)}{\phi'(C(\bm u))} - \frac{\{\phi'(u_i)\}^2 \phi''(C(\bm u))}{\{\phi'(C(\bm u))\}^3}, & & \bm u \in V_i,\\
\ddot C_{ij}(\bm u) &= -\frac{\phi'(u_i) \phi'(u_j) \phi''(C(\bm u))}{\{ \phi'(C(\bm u))\}^{3}},& & \bm u \in V_i \cap V_j.
\end{align*}

From now on, we restrict attention to the Clayton copula with parameter $\theta>0$, for which $\phi(u)=\theta^{-1}(t^{-\theta}-1)$, $\phi'(u)=-u^{-\theta-1}$ and $\phi''(u) = (\theta+1)u^{-\theta-2}$; other families can treated similarly. It follows that $|\phi''(u)/\phi'(u)| = (\theta+1) u^{-1}$, such that, for $\bm u \in V_i$,
\[
\Big| \frac{\phi''(u_i)}{\phi'(C(\bm u))} \Big|
=
\Big| \frac{\phi''(u_i)}{\phi'(u_i)} \Big| \cdot \dot C_i(\bm u)
\le 
(\theta+1) \frac1u_i.
\]
Next, since $0<C(\bm u)\le u_i$
\[
\Big| \frac{\{\phi'(u_i)\}^2 \phi''(C(\bm u))}{\{\phi'(C(\bm u))\}^3} \Big|
= (\theta+1) \frac{\{C(\bm u)\}^{2\theta+1}}{u_i^{2 \theta + 2}}
\le
(\theta+1) \frac{\{C(\bm u)\}^{2\theta+1}}{u_i^{2 \theta + 2}}
\le
(\theta+1)\frac1{u_i}.
\]
Finally, for $\bm u \in V_i \cap V_j$, since $0<C(\bm u)\le u_i \wedge u_j$,
\begin{align*}
    \Big|\frac{\phi'(u_i) \phi'(u_j) \phi''(C(\bm u))}{\{ \phi'(C(\bm u))\}^{3}} \Big|
&=
(\theta+1) \frac{\{C(\bm u)\}^{2\theta+1}}{u_i^{\theta + 1}u_j^{\theta+1}}
\le
(\theta+1)  \frac{(u_i \wedge u_j)^{2\theta+1}}{(u_i \wedge u_j)^{2\theta + 2}}
=
(\theta+1)  \frac1{u_i \wedge u_j}
% & \le
% (\theta+1)  \frac{(u_i \wedge u_j)^{\theta}}{(u_i \wedge u_j)^{\theta+1}}
\end{align*}
The preceding four displays imply that the bound in Condition 2.3 is met with $k=d$ and $K=\theta+1$. Moreover, since $\phi'(0)=\infty$, we also observe that the continuity condition on $\ddot C_{ii}$ and $\ddot C_{ij}$ is met for $k=d$, even at points $\bm u$ with $C(\bm u)=0$.

\section{Details for the proof of Proposition~\ref*{prop:asso}}

In the proof of Proposition~\ref*{prop:asso}, we claimed that
$\hat \rho_{n,I} =\tilde \rho_{n,I} + r_{n,I}^\rho$ and $\hat \tau_{n,I}=\tilde \tau_{n,I} + r_{n,I}^\tau$, where
\begin{align*}
\tilde \rho_{n,I} =
12 \int \hat C_{n,I} \diff \Pi_2 - 3, \qquad 
\tilde \tau_{n,I} = 4 \int \hat C_{n,I} \diff \hat C_{n,I} - 1,
\end{align*}
and where $|r_{n,I}^\rho| \le 6/(n-1)$ and $|r_{n,I}^\tau| \le 4/(n-1)$. We provide a detailed proof, using the formula $\int D \diff \hat C_{n,I} = \int \hat C_{n,I} \diff D + \frac1n$ that was shown to be valid for all bivariate copulas $D$ and all $I\subset \{1, \dots, d\}$ with $|I|=2$.

For Spearman's rho, using that $\sum_{i=1}^n R_{i\ell}^2=n(n+1)(2n+1)/6$ and $\sum_{i=1}^n R_{i\ell}R_{im} = n^3 \int \Pi_2 \diff \hat C_{n,I}$, we have
\begin{align*}
\hat \rho_{n,I} 
= 
1- \frac{6 \sum_{i=1}^n (R_{i\ell} - R_{im})^2}{n(n-1)(n+1)} 
&=
1- \frac{6 \sum_{i=1}^n R_{i\ell}^2 - 2R_{i\ell}R_{im}+R_{im}^2}{n(n-1)(n+1)} 
\\&=
1- \frac{2(2n+1)}{n-1} + \frac{12 n^2}{(n-1)(n+1)} \int \Pi_2 \diff \hat C_{n,I}
\\&=
-3 - \frac{6}{n-1} + \frac{12 n^2}{n^2-1} \Big\{ \int  \hat C_{n,I} \diff\Pi_2 + \frac1n \Big\}
\\&=
12 \int  \hat C_{n,I} \diff\Pi_2  - 3 + r_{n,I}^\rho,
\end{align*}
where
\begin{align*}
r_{n,I}^\rho = - \frac{6}{n-1} + \frac{12}{n^2-1}  \int  \hat C_{n,I} \diff\Pi_2 + \frac{12 n}{n^2-1}
&=
\frac{6}{n^2-1} \Big\{ -(n+1) + 2 \int  \hat C_{n,I} \diff\Pi_2 + 2n \Big\}
\\&=
\frac{6}{n^2-1} \Big\{ n - 1 + 2 \int  \hat C_{n,I} \diff\Pi_2 \Big\}.
\end{align*}
Using the crude bound $0 \le  \int  \hat C_{n,I} \diff\Pi_2 \le 1$, we obtain the bound
\begin{align*}
|r_{n,I}^\rho| \le \frac{6}{n^2-1} ( n + 1 ) = \frac{6}{n-1}.
\end{align*}

For Kendall's tau, using that $\mathrm{sgn}(u) = 2 \bm 1(u>0)-1$ for $u \ne 0$,
\begin{align*}
\hat \tau_{n,I} 
&=
\frac2{n(n-1)} \sum_{1 \le i < j \le n} \mathrm{sgn}(X_{i\ell}-X_{j\ell}) \mathrm{sgn}(X_{im}-X_{jm})
\\&=
\frac1{n(n-1)} \sum_{1 \le i \ne j \le n} \mathrm{sgn}(X_{i\ell}-X_{j\ell}) \mathrm{sgn}(X_{im}-X_{jm}) \\
&=
\frac1{n(n-1)} \sum_{1 \le i \ne j \le n} \mathrm{sgn}(R_{i\ell}-R_{j\ell}) \mathrm{sgn}( R_{im}- R_{jm}) \\
&=
\frac1{n(n-1)}   \sum_{1 \le i \ne j \le n} \Big\{ 4 \bm 1 (R_{i\ell}>R_{j\ell}, R_{im} >R_{jm}) - 2 \bm 1 (R_{i\ell}>R_{j\ell}) - 2 \bm 1 (R_{im}>R_{jm}) +1 \Big\}.
\end{align*}
Next, 
\[
\sum_{1 \le i \ne j \le n} \bm 1 (R_{i\ell}>R_{j\ell}) = \sum_{1 \le i \ne j \le n} \bm1 (i>j) = \frac{n(n-1)}2,
\]
and
\begin{align*}
\sum_{1 \le i \ne j \le n} \bm 1 (R_{i\ell}>R_{j\ell}, R_{im} >R_{jm})
=
\sum_{1 \le i, j \le n} \bm 1 (R_{i\ell}>R_{j\ell}, R_{im} >R_{jm})
&=
\sum_{1 \le i , j \le n} \bm 1 (\hat U_{i\ell}> \hat U_{j\ell}, \hat U_{im} > \hat U_{jm})
\\&=
- n + \sum_{1 \le i , j \le n} \bm 1 (\hat U_{i\ell} \ge \hat U_{j\ell}, \hat U_{im} \ge  \hat U_{jm}) 
\\&=
- n + n^2 \int \hat C_{n,I} \diff \hat C_{n,I},
\end{align*}
(the third equality being valid since there is no ties), which implies
\begin{align*}
\hat \tau_{n,I} 
=
\frac1{n(n-1)} \Big\{ - n + 4 n^2 \int \hat C_{n,I} \diff \hat C_{n,I} - 2 n(n-1) + n(n-1) \Big\}
&= \frac{n}{n-1} \Big\{ 4  \int \hat C_{n,I} \diff \hat C_{n,I}-1  \Big\}
\\&=
 4 \int \hat C_{n,I} \diff \hat C_{n,I}- 1 + r_{n,I}^\tau,
\end{align*}
where 
\[
r_{n,I}^\tau 
=
\frac1{n-1} \Big\{ 4 \int \hat C_{n,I} \diff \hat C_{n,I} - 1\Big\}.
\]
Hence, $|r_{n,I}^\tau| \le 4/(n-1)$.

\section{Proof of Lemma~\ref*{lem:cnctn}}
\label{sec:reduction-inverses}

For the sake of completeness, we repeat the lemma.

\begin{lemma-no-number} %\label{lem:cnctn} 
Fix $k \in \N_{\ge 2}$.
Assume that, for each $j \in \{1, \dots, d\}$, the $j$th first-order partial derivative $\dot C_j$ exists and is continuous on $V_{j} \cap W_k$.
Then, with probability one,
\begin{align} \label{eq:cnctn2}
\sup_{\bm u \in W_k} | \Cb_n(\bm u) - \tilde \Cb_n(\bm u)| \le \frac{k}{\sqrt n}.
\end{align}
\end{lemma-no-number}

\begin{proof}
The event 
$
\Omega' := \{\text{there exists $n\in\N, j \in \{1, \dots, d\}$ such that there are ties among}$ 
$U_{1j}, \dots, U_{nj}\}
$
has probability zero. We will show \eqref{eq:cnctn2} on the complement of that event.
Fix $\bm u \in W_k$ and let $I_{\bm u}$ denote the set of indexes $j$ for which $u_j <1 $; note that $|I_{\bm u}|  \le k$. For the ease of notation, we assume $|I_{\bm u}| = k$. Then,
$
\hat C_n(\bm u)  = \frac1n \sum_{i=1}^n \prod_{j\in I_{\bm u}} \bm1( \hat U_{i,j} \le u_j).
$
The product in the previous equation can be written as a telescopian sum
\begin{align*}
\prod_{j\in I_{\bm u}} \bm1( \hat U_{i,j} \le u_j)
&=
\prod_{j\in I_{\bm u}} \bm1( U_{i,j} \le G_{n,j}^-(u_j))
+ 
\Gamma_n(\bm u),
\end{align*}
where, writing $I_u=\{j_1, \dots, j_k\}$, 
\begin{multline*}
\Gamma_n(\bm u) 
= \sum_{\ell=1}^k \Big( \prod_{s=1}^{\ell-1} \bm 1(\hat U_{i,j_s} \le u_{j_s} )\Big) 
\Big( \prod_{s=\ell+1}^k \bm 1(U_{i,j_s} \le G_{n,j_s}^-(u_{j_s})) \Big) \\
\times  \Big\{ \bm 1(\hat U_{i,j_\ell} \le u_{j_\ell})  - \bm 1(U_{i,j_\ell} \le G_{n,j_\ell}^-(u_{j_\ell})) \Big\}.
\end{multline*}
Here,
\[
|\Gamma_n(\bm u)| 
\le 
\sum_{\ell=1}^k \big|\bm 1(\hat U_{i,j_\ell} \le u_{j_\ell})  - \bm 1(U_{i,j_\ell} \le G_{n,j_\ell}^-(u_{j_\ell})) \big|
\le
\sum_{\ell=1}^k \bm 1(U_{i,j_\ell} = G_{n,j_\ell}^-(u_{i,j_\ell})).
\]
As a consequence,
$
|\hat C_n(\bm u)-\tilde C_n(\bm u)|
\le 
\frac1n \sum_{i=1}^n \sum_{\ell=1}^k \bm 1(U_{i,j_\ell} = G_{n,j_\ell}^-(u_{i,j_\ell})) \le \frac{k}{n}
$
almost surely, because there are no ties with probability one. This implies the assertion. 
\end{proof}

\section*{Acknowledgments}
The authors are grateful to two unknown referees and an Associate Editor for their constructive comments that helped to improve the presentation substantially. The authors are grateful to Katharina Effertz for pointing out a possible improvement of a former version of Lemma \ref{lem:sn3-new} that allowed to get of rid of loglog-terms in the final result. 

\section*{Funding}
The first author was supported by the 
Deutsche For\-schungsgemeinschaft (DFG, German Research Foundation; Project-ID 520388526;  TRR 391:  Spatio-temporal Statistics for the Transition of Energy and Transport), 
which is greatly acknowledged.

\bibliographystyle{apalike}
\bibliography{biblio}

\begin{thebibliography}{}

\bibitem[Anatolyev and Pyrlik, 2022]{AnatolyevPyrlik2022}
Anatolyev, S. and Pyrlik, V. (2022).
\newblock Copula shrinkage and portfolio allocation in ultra-high dimensions.
\newblock {\em Journal of Economic Dynamics and Control}, 143:104508.

\bibitem[Bastian et~al., 2024]{Det24}
Bastian, P., Dette, H., and Heiny, J. (2024).
\newblock Testing for practically significant dependencies in high dimensions
  via bootstrapping maxima of {$U$}-statistics.
\newblock {\em Ann. Statist.}, 52(2):628--653.

\bibitem[Beare and Seo, 2020]{BeaSeo20}
Beare, B.~K. and Seo, J. (2020).
\newblock Randomization tests of copula symmetry.
\newblock {\em Econometric Theory}, 36(6):1025–1063.

\bibitem[Boucheron and Thomas, 2012]{BouTho12}
Boucheron, S. and Thomas, M. (2012).
\newblock Concentration inequalities for order statistics.
\newblock {\em Electron. Commun. Probab.}, 17:no. 51, 12.

\bibitem[B\"{u}cher et~al., 2023]{BucGen23}
B\"{u}cher, A., Genest, C., Lockhart, R.~A., and Ne\v{s}lehov\'{a}, J.~G.
  (2023).
\newblock Asymptotic behavior of an intrinsic rank-based estimator of the
  {P}ickands dependence function constructed from {B}-splines.
\newblock {\em Extremes}, 26(1):101--138.

\bibitem[B\"{u}cher et~al., 2014]{BucKojRohSeg14}
B\"{u}cher, A., Kojadinovic, I., Rohmer, T., and Segers, J. (2014).
\newblock Detecting changes in cross-sectional dependence in multivariate time
  series.
\newblock {\em J. Multivariate Anal.}, 132(0):111 -- 128.

\bibitem[B{\"u}cher and Pakzad, 2024]{BucPak24}
B{\"u}cher, A. and Pakzad, C. (2024).
\newblock {Testing for independence in high dimensions based on empirical
  copulas}.
\newblock {\em Ann. Statist.}, 52(1):311 -- 334.

\bibitem[Chatelain et~al., 2020]{ChaFouNes20}
Chatelain, S., Foug{\`e}res, A.-L., and Nešlehov{\'a}, J.~G. (2020).
\newblock {Inference for Archimax copulas}.
\newblock {\em The Annals of Statistics}, 48(2):1025 -- 1051.

\bibitem[Chen et~al., 2010]{CheZhaZho10}
Chen, S.~X., Zhang, L.-X., and Zhong, P.-S. (2010).
\newblock Tests for high-dimensional covariance matrices.
\newblock {\em J. Amer. Statist. Assoc.}, 105(490):810--819.

\bibitem[Chernozhukov et~al., 2013]{Che13}
Chernozhukov, V., Chetverikov, D., and Kato, K. (2013).
\newblock Gaussian approximations and multiplier bootstrap for maxima of sums
  of high-dimensional random vectors.
\newblock {\em Ann. Statist.}, 41(6):2786--2819.

\bibitem[Chernozhuokov et~al., 2022]{Che22}
Chernozhuokov, V., Chetverikov, D., Kato, K., and Koike, Y. (2022).
\newblock Improved central limit theorem and bootstrap approximations in high
  dimensions.
\newblock {\em Ann. Statist.}, 50(5):2562--2586.

\bibitem[Deheuvels, 1979]{Deh79}
Deheuvels, P. (1979).
\newblock La fonction de d\'ependance empirique et ses propri\'et\'es. {U}n
  test non param\'etrique d'ind\'ependance.
\newblock {\em Acad. Roy. Belg. Bull. Cl. Sci. (5)}, 65(6):274--292.

\bibitem[Deheuvels, 1981]{Deh81}
Deheuvels, P. (1981).
\newblock An asymptotic decomposition for multivariate distribution-free tests
  of independence.
\newblock {\em J. Multivariate Anal.}, 11(1):102--113.

\bibitem[Deo, 1972]{Deo72}
Deo, C.~M. (1972).
\newblock Some limit theorems for maxima of absolute values of gaussian
  sequences.
\newblock {\em Sankhyā: The Indian Journal of Statistics, Series A
  (1961-2002)}, 34(3):289--292.

\bibitem[Drton et~al., 2020]{Drt20}
Drton, M., Han, F., and Shi, H. (2020).
\newblock {High-dimensional consistent independence testing with maxima of rank
  correlations}.
\newblock {\em The Annals of Statistics}, 48(6):3206 -- 3227.

\bibitem[Engelke et~al., 2024]{Eng24}
Engelke, S., Lalancette, M., and Volgushev, S. (2024).
\newblock Learning extremal graphical structures in high dimensions.

\bibitem[Engelke et~al., 2015]{Eng15}
Engelke, S., Malinowski, A., Kabluchko, Z., and Schlather, M. (2015).
\newblock Estimation of {H}\"usler-{R}eiss distributions and {B}rown-{R}esnick
  processes.
\newblock {\em J. R. Stat. Soc. Ser. B. Stat. Methodol.}, 77(1):239--265.

\bibitem[Fermanian et~al., 2004]{FerRadWeg04}
Fermanian, J.-D., Radulovi{\'c}, D., and Wegkamp, M. (2004).
\newblock Weak convergence of empirical copula processes.
\newblock {\em Bernoulli}, 10(5):847--860.

\bibitem[Genest et~al., 2011]{GenNesQue11}
Genest, C., Ne{\v{s}}lehov{\'a}, J., and Quessy, J.-F. (2011).
\newblock Tests of symmetry for bivariate copulas.
\newblock {\em The Annals of the Institute of Statistical Mathematics},
  64:811--834.

\bibitem[Genest et~al., 2019]{GenNesRemMur19}
Genest, C., Ne\v{s}lehov\'{a}, J.~G., R\'{e}millard, B., and Murphy, O.~A.
  (2019).
\newblock Testing for independence in arbitrary distributions.
\newblock {\em Biometrika}, 106(1):47--68.

\bibitem[Genest et~al., 2007]{GenQueRem07}
Genest, C., Quessy, J.-F., and Remillard, B. (2007).
\newblock Asymptotic local efficiency of {C}ram\'{e}r-von {M}ises tests for
  multivariate independence.
\newblock {\em Ann. Statist.}, 35(1):166--191.

\bibitem[Genest and R{\'e}millard, 2004]{GenRem04}
Genest, C. and R{\'e}millard, B. (2004).
\newblock Tests of independence and randomness based on the empirical copula
  process.
\newblock {\em Test}, 13(2):335--370.

\bibitem[Genest et~al., 2009]{GenRemBea09}
Genest, C., R{\'e}millard, B., and Beaudoin, D. (2009).
\newblock Goodness-of-fit tests for copulas: a review and a power study.
\newblock {\em Insurance Math. Econom.}, 44(2):199--213.

\bibitem[Genest and Segers, 2009]{GenSeg09}
Genest, C. and Segers, J. (2009).
\newblock Rank-based inference for bivariate extreme-value copulas.
\newblock {\em Ann. Statist.}, 37(5B):2990--3022.

\bibitem[Ghoudi et~al., 2001]{GhoKulRem01}
Ghoudi, K., Kulperger, R.~J., and R\'{e}millard, B. (2001).
\newblock A nonparametric test of serial independence for time series and
  residuals.
\newblock {\em J. Multivariate Anal.}, 79(2):191--218.

\bibitem[Gudendorf and Segers, 2010]{GudSeg10}
Gudendorf, G. and Segers, J. (2010).
\newblock Extreme-value copulas.
\newblock In {\em Copula theory and its applications}, volume 198 of {\em Lect.
  Notes Stat. Proc.}, pages 127--145. Springer, Heidelberg.

\bibitem[Han et~al., 2017]{HanCheLiu17}
Han, F., Chen, S., and Liu, H. (2017).
\newblock Distribution-free tests of independence in high dimensions.
\newblock {\em Biometrika}, 104(4):813--828.

\bibitem[Kasa et~al., 2019]{KasaBhattacharyaRajan2019}
Kasa, S.~R., Bhattacharya, S., and Rajan, V. (2019).
\newblock Gaussian mixture copulas for high-dimensional clustering and
  dependency-based subtyping.
\newblock {\em Bioinformatics}, 36(2):621--628.

\bibitem[Kojadinovic and Holmes, 2009]{KojHol09}
Kojadinovic, I. and Holmes, M. (2009).
\newblock Tests of independence among continuous random vectors based on
  {C}ram\'{e}r-von {M}ises functionals of the empirical copula process.
\newblock {\em J. Multivariate Anal.}, 100(6):1137--1154.

\bibitem[Kojadinovic and Yan, 2011]{KojYan11}
Kojadinovic, I. and Yan, J. (2011).
\newblock A goodness-of-fit test for multivariate multiparameter copulas based
  on multiplier central limit theorems.
\newblock {\em Stat. Comput.}, 21(1):17--30.

\bibitem[Lancaster, 1969]{Lan69}
Lancaster, H.~O. (1969).
\newblock {\em The chi-squared distribution.}
\newblock John Wiley \& Sons, Inc., New York-London-Sydney,.

\bibitem[Leung and Drton, 2018]{LeuDrt18}
Leung, D. and Drton, M. (2018).
\newblock Testing independence in high dimensions with sums of rank
  correlations.
\newblock {\em Ann. Statist.}, 46(1):280--307.

\bibitem[Li et~al., 2024]{LiWangYao24}
Li, W., Wang, Q., and Yao, J. (2024).
\newblock {Distance correlation test for high-dimensional independence}.
\newblock {\em Bernoulli}, 30(4):3165 -- 3192.

\bibitem[Liu et~al., 2012]{LiuHan12}
Liu, H., Han, F., Yuan, M., Lafferty, J., and Wasserman, L. (2012).
\newblock {High-dimensional semiparametric Gaussian copula graphical models}.
\newblock {\em The Annals of Statistics}, 40(4):2293 -- 2326.

\bibitem[Liu et~al., 2009]{LiuLafWas09}
Liu, H., Lafferty, J., and Wasserman, L. (2009).
\newblock The nonparanormal: semiparametric estimation of high dimensional
  undirected graphs.
\newblock {\em J. Mach. Learn. Res.}, 10:2295--2328.

\bibitem[McNeil and Ne\v{s}lehov\'a, 2009]{McnNes09}
McNeil, A.~J. and Ne\v{s}lehov\'a, J. (2009).
\newblock Multivariate {A}rchimedean copulas, {$d$}-monotone functions and
  {$l_1$}-norm symmetric distributions.
\newblock {\em Ann. Statist.}, 37(5B):3059--3097.

\bibitem[M\"uller and Czado, 2019]{MulCza19}
M\"uller, D. and Czado, C. (2019).
\newblock Dependence modelling in ultra high dimensions with vine copulas and
  the graphical lasso.
\newblock {\em Comput. Statist. Data Anal.}, 137:211--232.

\bibitem[Naaman, 2021]{Naa21}
Naaman, M. (2021).
\newblock On the tight constant in the multivariate
  {D}voretzky–{K}iefer–{W}olfowitz inequality.
\newblock {\em Statistics \& Probability Letters}, 173:109088.

\bibitem[Omelka et~al., 2009]{OmeGijVer09}
Omelka, M., Gijbels, I., and Veraverbeke, N. (2009).
\newblock Improved kernel estimation of copulas: weak convergence and
  goodness-of-fit testing.
\newblock {\em Ann. Statist.}, 37(5B):3023--3058.

\bibitem[Romano and Wolf, 2005]{RomWol05}
Romano, J.~P. and Wolf, M. (2005).
\newblock Exact and approximate stepdown methods for multiple hypothesis
  testing.
\newblock {\em Journal of the American Statistical Association},
  100(469):94--108.

\bibitem[R{\"{u}}schendorf, 1976]{Rus76}
R{\"{u}}schendorf, L. (1976).
\newblock Asymptotic distributions of multivariate rank order statistics.
\newblock {\em Annals of Statistics}, 4:912--923.

\bibitem[Rémillard, 2017]{Rem17}
Rémillard, B. (2017).
\newblock Goodness-of-fit tests for copulas of multivariate time series.
\newblock {\em Econometrics}, 5(1).

\bibitem[Sahin and Czado, 2024]{SahinCzado2024}
Sahin, O. and Czado, C. (2024).
\newblock High-dimensional sparse vine copula regression with application to
  genomic prediction.
\newblock {\em Biometrics}, 80(1):ujad042.

\bibitem[Schmid et~al., 2010]{Sch10}
Schmid, F., Schmidt, R., Blumentritt, T., Gai{\ss}er, S., and Ruppert, M.
  (2010).
\newblock Copula-based measures of multivariate association.
\newblock In {\em Copula theory and its applications}, volume 198 of {\em Lect.
  Notes Stat. Proc.}, pages 209--236. Springer, Heidelberg.

\bibitem[Segers, 2012]{Seg12}
Segers, J. (2012).
\newblock Asymptotics of empirical copula processes under non-restrictive
  smoothness assumptions.
\newblock {\em Bernoulli}, 18(3):764--782.

\bibitem[Shorack and Wellner, 2009]{ShoWel09}
Shorack, G.~R. and Wellner, J.~A. (2009).
\newblock {\em Empirical processes with applications to statistics}, volume~59
  of {\em Classics in Applied Mathematics}.
\newblock Society for Industrial and Applied Mathematics (SIAM), Philadelphia,
  PA.
\newblock Reprint of the 1986 original [ MR0838963].

\bibitem[Stute, 1984]{Stu84}
Stute, W. (1984).
\newblock The oscillation behavior of empirical processes: the multivariate
  case.
\newblock {\em Ann. Probab.}, 12(2):361--379.

\bibitem[Tsukahara, 2005]{Tsu05}
Tsukahara, H. (2005).
\newblock Semiparametric estimation in copula models.
\newblock {\em Canad. J. Statist.}, 33(3):357--375.

\bibitem[Xia and Li, 2021]{XiaLi2021}
Xia, X. and Li, J. (2021).
\newblock Copula-based partial correlation screening: a joint and robust
  approach.
\newblock {\em Statistica Sinica}, 31(1):pp. 421--447.

\bibitem[Yao et~al., 2018]{Yao18}
Yao, S., Zhang, X., and Shao, X. (2018).
\newblock Testing mutual independence in high dimension via distance
  covariance.
\newblock {\em J. R. Stat. Soc. Ser. B. Stat. Methodol.}, 80(3):455--480.

\end{thebibliography}
\end{document}